\newtheorem{theorem}{Theorem}[section]
\newtheorem{conjecture}[theorem]{Conjecture}
\newtheorem{lemma}[theorem]{Lemma}
\newtheorem{proposition}[theorem]{Proposition}
\newtheorem{prop}[theorem]{Proposition}
\newtheorem{question}[theorem]{Question}
\newtheorem{corollary}[theorem]{Corollary}
\newtheorem{observation}[theorem]{Observation}
\newcommand{\F}{\mathcal{F}}
\newcommand{\A}{\mathcal{A}}
\newcommand{\B}{\mathcal{B}}
\newcommand{\K}{\mathcal{K}}
\newcommand{\T}{\mathcal{T}}
\newcommand{\X}{\mathcal{X}}
\newcommand{\Y}{\mathcal{Y}}
\renewcommand{\P}{\mathbb{P}}
\newcommand{\E}{\mathbb{E}}
\newcommand{\ep}{\varepsilon}
\renewcommand{\bar}[1]{\overline{#1}}
\renewcommand{\le}{\leqslant}
\renewcommand{\ge}{\geqslant}
\renewcommand{\leq}{\leqslant}
\renewcommand{\geq}{\geqslant}
\newcommand{\loga}{\log\frac{\binom{n-1}{k}}{a}}
\newcommand{\logaF}{\log\frac{\binom{n-1}{k}}{a_\F}}
\newcommand{\eoaloga}{\exp\left( o\left( a \loga \right) \right)}
\title{Sharp threshold for the Erd\H{o}s--Ko--Rado theorem}
\author{
J\'ozsef Balogh\footnote{Department of Mathematics, University of Illinois at Urbana-Champaign, Urbana, Illinois 61801, USA, and Moscow Institute of Physics and Technology, Russian Federation. E-mail: \texttt{jobal@illinois.edu}. Research supported by NSF RTG Grant DMS-1937241, NSF Grant DMS-1764123, Arnold O. Beckman Research Award (UIUC Campus Research Board RB 18132), the Langan Scholar Fund (UIUC), and the Simons Fellowship.}
\and
Robert A.~Krueger\footnote{Department of Mathematics, University of Illinois at Urbana-Champaign, Urbana, Illinois 61801, USA. Email: \texttt{rak5@illinois.edu}. Research partially supported by NSF RTG Grant DMS-1937241 and UIUC Campus Research Board Award RB 18132.}
\and
Haoran Luo\footnote{Department of Mathematics, University of Illinois at Urbana-Champaign, Urbana, Illinois 61801, USA. Email: \texttt{haoranl8@illinois.edu}.}}
\date{}
\begin{document}
\maketitle
\begin{abstract}
For positive integers $n$ and $k$ with $n\geq 2k+1$, the Kneser graph $K(n,k)$ is the graph with vertex set consisting of all $k$-sets of $\{1,\dots,n\}$, where two $k$-sets are adjacent exactly when they are disjoint. The independent sets of $K(n,k)$ are $k$-uniform intersecting families, and hence the maximum size independent set is given by the Erd\H{o}s-Ko-Rado Theorem. Let $K_p(n,k)$ be a random spanning subgraph of $K(n,k)$ where each edge is included independently with probability $p$. Bollob\'as, Narayanan, and Raigorodskii asked for what $p$ does $K_p(n,k)$ have the same independence number as $K(n,k)$ with high probability. For $n=2k+1$, we prove a hitting time result, which gives a sharp threshold for this problem at $p=3/4$. Additionally, completing work of Das and Tran and work of Devlin and Kahn, we determine a sharp threshold function for all $n>2k+1$.
\medskip

\noindent\textbf{Keywords:} intersecting families, transference, Kneser graph, hitting time.

\noindent\textbf{2020 Mathematics Subject Classification:} 05C80, 05D05, 05D40.
\end{abstract}

\section{Introduction} \label{sec::int}

For a positive integer $n$, we denote by $[n]$ the set $\{1,2,\dots,n\}$. For a set $A$, we denote by $\binom{A}{k}$ (resp.~$\binom{A}{\leq k}$) the collection of all subsets of $A$ of size (resp.~at most) $k$.

For positive integers $n$ and $k$, the \emph{Kneser graph} $K(n,k)$ is the graph on $\binom{[n]}{k}$, where two sets are adjacent exactly when they are disjoint. This graph has no edges unless $n\geq 2k$, $K(2k,k)$ is just a matching, and $K(n,1)$ is a complete graph, so we assume from now on that $n\geq 2k+1$ and $k \geq 2$. The independent sets of $K(n,k)$ are $k$-uniform intersecting families, important objects of research in extremal combinatorics. For $x \in [n]$, we call $\K_x = \{S \in \binom{[n]}{k} : x \in S\}$ the \emph{star} based at $x$. Note that $\K_x$ is an independent set of $K(n,k)$ of size $\binom{n-1}{k-1}$. The classical Erd\H{o}s-Ko-Rado theorem~\cite{EKR} states that $\alpha(K(n,k)) = \binom{n-1}{k-1}$ when $n\geq 2k$, where $\alpha(G)$ is the maximum size of an independent set in $G$. An extension by Hilton and Milner~\cite{HM} implies that every maximum independent set of the Kneser graph is a star, when $n\geq 2k+1$.

Let $K_p(n,k)$ be a random spanning subgraph of $K(n,k)$, where each edge is included independently with probability $p = p(n,k)$. If $E = E(n,k)$ is an event defined on the probability space associated with $K_p(n,k)$, we say $E$ occurs \emph{with high probability} (\emph{w.h.p.\@}) if the probability of $E$ tends to $1$ as $n \to \infty$. In general, we consider $k$ to depend on $n$, so we need only take $n\to\infty$. Bollob\'as, Narayanan, and Raigorodskii~\cite{BNR} proposed a random variant of the Erd\H{o}s-Ko-Rado theorem, following a recent trend of so-called transference results. 
\begin{question}[\cite{BNR}]\label{q::main}
For what $p$ does $\alpha(K_p(n,k)) = \alpha(K(n,k))$ with high probability?
\end{question}
Informally, this is asking if the Erd\H{o}s-Ko-Rado theorem still holds if we `forget' that a randomly selected portion of disjoint pairs are in fact disjoint. Since the probability that $\alpha(K_p(n,k)) = \binom{n-1}{k-1}$ increases with $p$, to answer Question~\ref{q::main}, one wants to determine the `smallest' $p$ for which $\alpha(K_p(n,k)) = \binom{n-1}{k-1}$ w.h.p.

An obvious necessary condition to $\alpha(K_p(n,k)) = \binom{n-1}{k-1}$ is the maximality of stars as independent sets. A \emph{superstar based at $x$} is a set family of the form $\{A\} \cup \K_x$, where $A \not\in \K_x$. Thus $\alpha(K_p(n,k)) > \binom{n-1}{k-1}$ if there exists a superstar spanning an independent set. The expected number of independent superstars in $K_p(n,k)$ is
\[ n \binom{n-1}{k} (1-p)^{\binom{n-k-1}{k-1}} ,\]
which changes from $o(1)$ to $\omega(1)$ when $p \approx p_0$, where
\begin{equation}\label{eq::p0}
p = p_0 :=
\begin{cases}
3/4, & \textrm{if $n = 2k+1$}, \\
\log\left(n\binom{n-1}{k}\right)/\binom{n-k-1}{k-1}, & \textrm{if $n > 2k+1$},
\end{cases}
\end{equation}
and $\log$ denotes the natural logarithm. The only reason that $n=2k+1$ seems different is because we lose the approximation $1-p_0 \approx e^{-p_0}$ in that case. A straightforward second moment calculation (see \cite{BNR}, \cite{DT}, or Observation~\ref{obs::thresh}) gives that for every constant $\ep>0$, for every $p \leq (1-\ep)p_0$, $K_p(n,k)$ has an independent superstar w.h.p., and thus $\alpha(K_p(n,k)) > \binom{n-1}{k-1}$ w.h.p.

When they posed Question~\ref{q::main}, Bollob\'as, Narayanan, and Raigorodskii~\cite{BNR} effectively answered it for $k = o(n^{1/3})$. They showed that for all constant $\ep>0$, for all $p \geq (1+\ep)p_0$, $\alpha(K_p(n,k)) = \binom{n-1}{k-1}$ w.h.p., showing that $p_0$ is a `sharp threshold function' for this problem when $k = o(n^{1/3})$. Subsequently, Balogh, Bollob\'as, and Narayanan~\cite{BBN} made progress on Question~\ref{q::main} for all $k \leq (\frac{1}{2}-\gamma)n$, for every constant $\gamma>0$. Das and Tran~\cite{DT} formulated a removal lemma for $K(n,k)$, showed that $p_0$ is a sharp threshold function for $k \leq n/C$, for some constant $C$, and showed that $p_0$ is a `coarse' threshold function for $k \leq (\frac{1}{2}-\gamma)n$, for every constant $\gamma>0$, meaning that there exists some constant $c=c(\gamma)>1$ such that for all $p \geq cp_0$, $\alpha(K_p(n,k)) = \binom{n-1}{k-1}$. Concurrently, Devlin and Kahn~\cite{DK} showed that $p_0$ is a coarse threshold function for all $n \geq 2k+1$, that is, they removed the dependence of $c$ on $\gamma$. In particular, they made the first progress on Question~\ref{q::main} for $n=2k+1$, showing that for some $\ep>0$, $\alpha(K_{(1-\ep)}(2k+1,k)) = \binom{2k}{k-1}$ w.h.p. What remains is to show that $p_0$ is a sharp threshold for $k \geq n/C$, which is what we do in the present work.

In fact, we can prove something stronger, as most previous papers have done: above a threshold, not only are stars maximum independent sets, but they are unique. We say that a spanning subgraph of $K(n,k)$ is \emph{EKR} if every maximum independent set is a star. Clearly, if $K_p(n,k)$ is EKR w.h.p., then $\alpha(K_p(n,k)) = \binom{n-1}{k-1}$ w.h.p. Thus, to resolve Question~\ref{q::main}, it suffices to show that for $p \geq (1+\ep)p_0$, $K_p(n,k)$ is EKR w.h.p.
\begin{theorem}\label{thm::main}
Let $\ep>0$ be a constant. If $n \geq 2k+1$ and $p \geq (1+\ep)p_0$, then $K_p(n,k)$ is EKR w.h.p.
\end{theorem}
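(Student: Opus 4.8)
I would first reduce Theorem~\ref{thm::main} to showing
\[
\sum_{\A} (1-p)^{e(\A)} = o(1),
\]
where $\A$ ranges over all families in $\binom{[n]}{k}$ of size exactly $\binom{n-1}{k-1}$ that are not stars, $e(\A)$ is the number of disjoint pairs inside $\A$, and we use $\P[\A\text{ independent in }K_p(n,k)] = (1-p)^{e(\A)}$. If $K_p(n,k)$ is not EKR, then either $\alpha>\binom{n-1}{k-1}$ or a non-star family attains $\binom{n-1}{k-1}$; in both cases a short deterministic argument (two distinct stars meet in only $\binom{n-2}{k-2}$ sets, which for $n\ge 2k+1$ is smaller by $\binom{n-2}{k-1}\ge 2$ than $\binom{n-1}{k-1}-1$) produces a non-star family of size exactly $\binom{n-1}{k-1}$ that is independent in $K_p(n,k)$. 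I would then partition the sum according to $\ell(\A):=\min_{x\in[n]}|\A\setminus\K_x|$, the distance of $\A$ to the nearest star.

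\textbf{Far from a star.} Fix a small constant $\delta>0$ and consider families with $\ell(\A)\ge \delta\binom{n-1}{k-1}$. Here I would invoke a supersaturation/removal statement for the Erd\H os--Ko--Rado theorem --- the removal lemma of Das and Tran, or a direct Kruskal--Katona-type argument --- to obtain $e(\A)\ge c(\delta)\binom nk\binom{n-k-1}{k-1}$. In the range that actually remains open, $k\ge n/C$, one has $p\binom{n-k-1}{k-1}\ge (1+\ep)\log\bigl(n\binom{n-1}{k}\bigr)$ when $n>2k+1$ (and $p=\tfrac34$, $\binom{n-k-1}{k-1}=k$ when $n=2k+1$), so $(1-p)^{e(\A)}$ is superexponentially small in $\binom nk$, which beats the trivial bound $2^{\binom nk}$ on the number of families; hence far-from-star families contribute $o(1)$. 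For $k<n/C$ this is precisely where the results of Bollob\'as--Narayanan--Raigorodskii and Das--Tran apply, so one cites them there.

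\textbf{Close to a star --- the crux.} For $1\le \ell(\A)<\delta\binom{n-1}{k-1}$, fix an optimal center $x$ and write $\A=\B\sqcup\mathcal S$ with $\mathcal S=\A\cap\K_x$ and $|\B|=\ell(\A)=:\ell$. The first thing to notice is that one must \emph{not} pay for the choice of which sets were deleted from $\K_x$: the count $\binom{\binom{n-1}{k}}{\ell}^{2}$ of ways to delete and insert $\ell$ sets already dwarfs $(1-p)^{-\ell\binom{n-k-1}{k-1}}$, and $\ell\binom{n-k-1}{k-1}$ bounds $e(\A)$ whenever $\B$ is intersecting --- the binding case. Instead I would expose $K_p$ and use that, for $\A$ to be independent, the $K_p$-neighbourhood of $\B$ inside $\K_x$ has size at most $\ell$; thus it suffices to union bound over the pair $(x,\B)$ only, weighted by $\P\bigl[\,|N_{K_p}(\B)\cap\K_x|\le\ell\,\bigr]$. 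Writing $\Gamma(\B)=\{C\in\K_x : C\cap B=\emptyset\text{ for some }B\in\B\}$, the random variable $|N_{K_p}(\B)\cap\K_x|$ stochastically dominates $\mathrm{Bin}(|\Gamma(\B)|,p)$ --- distinct sets of $\K_x$ see disjoint sets of edges --- so a Chernoff bound controls this probability in terms of $p\,|\Gamma(\B)|$. If $\B$ is ``spread'', meaning $|\Gamma(\B)|$ is within a constant (or $1-o(1)$) factor of its maximum $\ell\binom{n-k-1}{k-1}$, this beats the $\binom{\binom{n-1}{k}}{\ell}$ choices of $\B$. The remaining case is that $\B$ is ``concentrated'', i.e.\ $|\Gamma(\B)|$ is comparatively small; a Kruskal--Katona/Frankl-type stability statement for $\Gamma$ shows this forces $\B$ to lie essentially inside a union of very few sub-stars $\K_y\setminus\K_x$, and for those one argues separately, using crucially that $\A$ has the full size $\binom{n-1}{k-1}$ and is not a star (so the Hilton--Milner--Frankl gap applies, which --- together with $x$ being the genuine nearest center --- pins down the few possible structures).

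\textbf{Main obstacle.} I expect this ``concentrated, medium-defect'' sub-case, especially for $n=2k+1$ and more generally $k\ge n/C$ where $\binom{n-k-1}{k-1}$ is tiny next to $\binom{n-1}{k-1}$, to be the real difficulty: there every inequality above is essentially tight (for $n=2k+1$ even the spread case and the naive Chernoff estimate fail by only a polynomial-in-$k$ factor, so the count of spread configurations must itself be controlled, or a two-round exposure used), and the classical stability theory for intersecting families is weakest. This is presumably what forces ideas beyond Das--Tran and Devlin--Kahn. Finally, for the hitting-time strengthening at $n=2k+1$ (which is stronger than Theorem~\ref{thm::main}) one runs essentially the same analysis inside the random Kneser-graph process, showing that the last obstruction to EKR-ness is an independent superstar.
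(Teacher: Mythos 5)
Your high-level plan (reduce to a weighted union bound over non-star families of size exactly $\binom{n-1}{k-1}$, split by the diversity $a=\ell(\A)$, handle large diversity via the Das--Tran removal lemma) does track the paper's framing, and your instinct to avoid paying for the choice of the deleted part $\B_\F$ by exposing $K_p$ and only bounding $\P\bigl[\,|N_{K_p}(\A_\F)\cap\K_x|\le a\,\bigr]$ is a sound one (the paper achieves the same economy by restricting to $\B_\F\subseteq N(\A_\F)$ in $\T^2$, and by a two-round exposure of $e_H(\A,\B^{<\eta})$ vs.\ $e_H(\A,\bar{\B^{\ge\eta}})$ in Proposition~5.5 when $n-2k$ is large). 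However, the crux you flag --- small diversity for $k\ge n/C$ --- is precisely where your proposal has no mechanism: even after removing the $\B_\F$ cost and using the exact binomial tail rather than Chernoff, the union bound over all $\binom{\binom{n-1}{k}}{a}$ choices of $\A_\F$ loses by a $\textrm{poly}(k)^{a}$ factor for small $a$ at $n=2k+1$, and your fallback, a spread/concentrated dichotomy for $\Gamma(\A_\F)=N(\A_\F)\cap\K_x$ with Kruskal--Katona stability in the concentrated case, does not close this. Classical stability of shadows is too coarse in this regime (few substars cannot describe the typical concentrated $\A_\F$ when $n-2k$ is a small fraction of $n$), and you yourself note you do not see how to finish there.

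The two ingredients the paper actually uses at this point, and which your proposal is missing, are (i) a container-style \emph{fingerprint} lemma (Lemmas~\ref{lem::Ahigh} and~\ref{lem::Bhigh}) that, for $\F\in\T^1$, replaces the naive $\binom{\binom{n-1}{k}}{a}$ count of $\A_\F$ (resp.\ $\B_\F$) by $\exp(o(a\log(\binom{n-1}{k}/a)))$ --- by choosing a small random $\Y\subseteq\B_\F$ and observing that the high-degree part $\A_\F^{\ge\delta}$ is essentially recoverable from $N(\Y)$ plus small corrections, with the low-degree part controlled directly by the $e(\F)$ lower bound --- and (ii), specifically for $n=2k+1$ and small $a$, the reduction to $2$-linked components in the auxiliary graph $J_x(n,k)$ (Proposition~\ref{prop::T2reduc}, Lemma~\ref{lem::2linked}, Proposition~\ref{prop::large k small a}), which bounds the number of admissible $\A_\F$ by $\binom{2k}{k}(ek^2)^a$ rather than $\binom{\binom{2k}{k}}{a}$. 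The dichotomy in (i) is a per-vertex high/low degree split (degree of $A\in\A_\F$ to $\B_\F$), which is genuinely different from and more effective than your family-level spread/concentrated split of $|\Gamma(\A_\F)|$. Without (i) and (ii), the small-$a$, $k=\Theta(n)$ case remains open, which is exactly the new content of the theorem.
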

Perhaps Theorem~\ref{thm::main} does not completely resolve Question~\ref{q::main} because of the $\ep$. Bollob\'as, Narayanan, and Raigorodskii~\cite{BNR} raise the natural question of determining the `width' of the sharp threshold for Question~\ref{q::main}: roughly, how small, in terms of $n$ and $k$, may we take $\ep$ in Theorem~\ref{thm::main}? Das and Tran~\cite{DT} proved Theorem~\ref{thm::main} for all $\ep \gg \frac{1}{k}$ and $k \leq n/C$. Such tight control of $\ep$ and the definition of $p_0$ indicate that in some sense the independence of superstars is the most difficult obstacle to overcome for $\alpha(K_p(n,k))$ to equal $\binom{n-1}{k-1}$. Das and Tran~\cite{DT} suggested a stronger, hitting time version of Question~\ref{q::main} to capture this notion.

To give the statement, we need to consider the \emph{random subgraph process}. Consider a graph $G$, and let $e_1, \dots, e_m$ be the edges of $G$. Given a permutation $\pi: E(G) \to [m]$, we define $G_i(\pi)$ to be the spanning subgraph of $G$ with edges $e_k$ satisfying $\pi(e_k) \leq i$. Note that $G_i(\pi)$ has exactly $i$ edges. If we select $\pi$ uniformly at random, we can think of the sequence $G_1(\pi), G_2(\pi), \dots, G_m(\pi)$ as adding one edge of $G$ picked uniformly at random to $V(G)$ until there are no more edges to be added. For a monotone property $P$ which $G$ satisfies, we define the \emph{hitting time} $\tau_P$ to be the minimum $i$ such that $G_i$ satisfies property $P$. Note that $\tau_P$ is a random variable depending on the permutation $\pi$, and $G_{\tau_P}$ is a random subgraph of $G$ satisfying property $P$, but $G_{\tau_P}$ is not necessarily chosen uniformly from all such subgraphs.

For $n=2k+1$, we prove that as soon as stars are maximal independent sets, stars are maximum independent sets w.h.p., where the `with high probability' is in the context of the underlying probability space of the random subgraph process. To be precise, let
\[ \tau_{\text{super}} = \min\{i : K_i(n,k) \text{ has no independent superstars}\} ,\]
\[ \tau_{\alpha} = \min\left\{i : \alpha(K_i(n,k)) = \binom{n-1}{k-1}\right\} .\]
\begin{theorem}\label{thm::hitting:indep}
With high probability, $\tau_{\alpha} = \tau_{\textnormal{super}}$ when $n=2k+1$.
\end{theorem}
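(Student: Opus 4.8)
The plan is to prove Theorem~\ref{thm::hitting:indep} by a ``sandwiching'' argument: for $n=2k+1$ we will show that the hitting time $\tau_{\text{super}}$ for the disappearance of independent superstars, and the hitting time $\tau_\alpha$ for stars becoming maximum, both occur within the narrow window $p \in [(1-\delta)\frac34, (1+\delta)\frac34]$ for $\delta = \delta(k) \to 0$, and that nothing else can go wrong in that window. Since $\tau_{\text{super}} \le \tau_\alpha$ always (an independent superstar witnesses $\alpha > \binom{n-1}{k-1}$), it suffices to prove the reverse: with high probability, at the moment the last independent superstar disappears, there is no independent set of size exceeding $\binom{n-1}{k-1}$ at all. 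We will pass freely between the random subgraph process $K_i$ and the binomial model $K_p$ via the standard coupling, using that $\tau_{\text{super}}$ and $\tau_\alpha$ concentrate and that monotone properties transfer; the point is to show the \emph{difference} $\tau_\alpha - \tau_{\text{super}}$ is $o(1)$ relative to the number of edges added in any constant-factor window of $p$ near $3/4$.

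The first step is a sprinkling setup. Fix a small constant $\delta>0$ and consider the edges arriving between times corresponding to $p_- = (1-\delta)\frac34$ and $p_+ = (1+\delta)\frac34$; equivalently, couple $K_{p_+} = K_{p_-} \cup K_q$ where $q$ is chosen so that the union has the right marginal, and $q = \Theta(\delta)$. By the (already-cited) second moment calculation, at time $p_-$ there are still $\omega(1)$ independent superstars w.h.p., so $\tau_{\text{super}}$ lies strictly after $p_-$; and by Theorem~\ref{thm::main} (with $\ep = \delta$), at time $p_+$ the graph is EKR, hence has no independent superstar, so $\tau_{\text{super}}$ lies before $p_+$. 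Thus w.h.p.\ $\tau_{\text{super}}$ is realized in the window, and it suffices to show that \emph{already at time $p_-$}, $K_{p_-}(n,k)$ has no independent set of size $> \binom{2k}{k-1}$ \emph{except} possibly superstars and near-superstars — i.e., every independent set of size $> \binom{2k}{k-1}$ that is not ``star-like'' has already been destroyed by time $p_-$, with room to spare. Then the only independent sets surviving past $\binom{2k}{k-1}$ in size at time $\tau_{\text{super}}$ must be star-like, and by definition of $\tau_{\text{super}}$ even the smallest extensions of stars (the superstars) are gone, so the extra edges in $(\tau_{\text{super}}, \tau_{\text{super}}+1]$ cannot matter — $\alpha$ has dropped to $\binom{2k}{k-1}$ the instant the last superstar dies.

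The technical heart, therefore, is a robust stability statement at $p = (1-\delta)\frac34$ for $n = 2k+1$: \emph{every} intersecting-family candidate $\F \subseteq \binom{[2k+1]}{k}$ with $|\F| > \binom{2k}{k-1}$ that is far from every star — say, $|\F \setminus \K_x| \ge 2$ for all $x$, or more quantitatively $\min_x |\F \triangle \K_x|$ is not tiny — spans an edge of $K_{p_-}(n,k)$ w.h.p.\ with a union bound over all such $\F$. This is exactly the kind of transference/removal statement underlying Theorem~\ref{thm::main}, but we need it with enough slack that the failure probability beats the count of near-extremal non-star families. The key structural input is that for $n=2k+1$ the Kneser graph is very dense — $\K_x$ has $\binom{k}{k-1}=k$ neighbors outside — and Hilton--Milner/Frankl-type bounds say a non-star intersecting family of size close to $\binom{2k}{k-1}$ must be close to a star, so the number of disjoint pairs one must ``avoid'' to keep such an $\F$ independent grows at least linearly in the distance from a star; raising $(1-p_-) = \tfrac14 + \tfrac34\delta$ to that power and summing gives $o(1)$. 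I expect this union bound — getting the entropy/counting of near-extremal families to lose to the probability savings, uniformly down to distance $2$ from a star — to be the main obstacle, and it is presumably where the bulk of the paper's work (the refined stability and supersaturation for $n=2k+1$, not available from the $n > 2k+1$ analyses of Das--Tran and Devlin--Kahn) is spent. Once that is in hand, combining it with the window argument above closes the proof: w.h.p.\ $\tau_\alpha = \tau_{\text{super}}$.
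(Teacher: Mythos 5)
Your high-level scaffold — couple the hitting time to the binomial model, show non-star-like families die by $p_- = (1-\delta)\frac34$, and conclude that only star-like survivors remain at $\tau_{\text{super}}$ — is in line with the paper's approach, but the final step (``$\alpha$ has dropped $\ldots$ the instant the last superstar dies'') is exactly where the theorem's content lies, and the proposal does not bridge it. Consider $\F = \{A_1,A_2\}\cup(\K_x\setminus\{B\})$, which has size $\binom{2k}{k-1}+1$. It is as star-like as a family can be while not being a superstar, so the definition of $\tau_{\text{super}}$ does not touch it. In fact, at time $p_-$ such an $\F$ \emph{can} be independent (e.g.\ if $\{A_1\}\cup\K_x$ is still an independent superstar, as $\omega(1)$ of them are, and $A_2$'s only $\K_x$-neighbour in $K_{p_-}$ is $B$), so no stability estimate at $p_-$ controlling ``distance from a star'' rules it out; it survives to later times. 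At $\tau_{\text{super}}$ it is independent precisely when both $A_1$ and $A_2$ have $B$ as their unique $\K_x$-neighbour, and ruling this out is not automatic. The same issue recurs for $\{A_1,\dots,A_m\}\cup(\K_x\setminus\{B_1,\dots,B_{m-1}\})$, where a naive union bound over all such families fails.

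What the paper actually proves (Proposition~\ref{prop::hitreduc}) is a precise \emph{structural} characterization of the independent families of size $\binom{2k}{k-1}$ surviving at time $\tau_{\text{super}}$: w.h.p.\ they are exactly $\{A_1,\dots,A_m\}\cup(\K_x\setminus\{B_1,\dots,B_m\})$ with the edges from $\{A_i\}$ into $\K_x$ forming a \emph{perfect matching} $\{A_iB_i\}$, the $B_i$ distinct. Theorem~\ref{thm::hitting:indep} then follows in two lines: remove any $B\in\K_x\cap\F$ from a would-be independent $\F$ of size $\binom{2k}{k-1}+1$; the remainder has the matching form, so $B=B_i$ for some $i$, and $A_iB_i\in\F$ is an edge, a contradiction. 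The injectivity of $A_i\mapsto B_i$ is exactly what kills the $\{A_1,A_2\}\cup(\K_x\setminus\{B\})$ obstruction. Obtaining this matching structure requires machinery absent from your proposal: the Kruskal--Katona-based expansion Lemma~\ref{lem::iso} and the reduction to $\T^2$ (Proposition~\ref{prop::T2reduc}), a further reduction to $2$-linked components in the auxiliary graph $J_x(n,k)$ (Section~\ref{sec::hit:2link}), and the union bound over those components (Proposition~\ref{prop::large k small a}). A minor additional point: invoking Theorem~\ref{thm::main} at $p_+$ to locate $\tau_{\text{super}}$ is circular for $n=2k+1$, since the paper deduces Theorem~\ref{thm::main} there from Theorem~\ref{thm::hitting:EKR}; the direct first-moment estimate of Observation~\ref{obs::threshupper} is the correct citation.
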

This theorem reduces Question~\ref{q::main} to: for what $p$ is it true that $K_p(n,k)$ has no independent superstars? We defined $p_0$ to answer this question, so with an easy first moment calculation (see Observation~\ref{obs::threshupper}), Theorem~\ref{thm::hitting:indep} gives the sharp threshold for $\alpha(K_p(2k+1,k)) = \binom{2k}{k-1}$.

Analogously, we can prove a hitting time theorem for the EKR property, the property that stars are the unique maximum independent sets. A \emph{near-star based at $x$} is a set family of the form $\{A\} \cup (\K_x \setminus \{B\})$, where $A \not\in \K_x$ and $B \in \K_x$. The obvious necessary condition to $K_p(n,k)$ being EKR is that there are no independent near-stars. Let
\[ \tau_{\text{near}} = \min\{i : K_i(n,k) \text{ has no independent near-stars}\} ,\]
\[ \tau_{\text{EKR}} = \min\{i : K_i(n,k) \text{ is EKR}\} .\]
\begin{theorem}\label{thm::hitting:EKR}
With high probability, $\tau_{\textnormal{EKR}} = \tau_{\textnormal{near}}$ when $n=2k+1$.
\end{theorem}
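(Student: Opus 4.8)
\noindent\textit{Proof proposal.} The inequality $\tau_{\textnormal{EKR}}\geq\tau_{\textnormal{near}}$ is immediate: an independent near-star has size exactly $\binom{2k}{k-1}=\alpha(K(2k+1,k))$ and is not a star, so $K_i(2k+1,k)$ is not EKR for $i<\tau_{\textnormal{near}}$. For the reverse inequality the plan is to show that w.h.p.\ $K_{\tau_{\textnormal{near}}}(2k+1,k)$ is EKR. Since an independent superstar contains an independent near-star, $\tau_{\textnormal{super}}\leq\tau_{\textnormal{near}}$ always; by Theorem~\ref{thm::hitting:indep} we have w.h.p.\ $\tau_\alpha=\tau_{\textnormal{super}}$, which together with $\tau_{\textnormal{super}}\leq\tau_{\textnormal{near}}$ and the monotonicity of $\alpha$ along the process gives $\alpha(K_{\tau_{\textnormal{near}}})=\binom{2k}{k-1}$ w.h.p. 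Hence it remains to prove that w.h.p.\ every independent set $I$ of $K_{\tau_{\textnormal{near}}}$ of size $\binom{2k}{k-1}$ is a star. Given such an $I$, I would choose $x$ maximizing $|I\cap\K_x|$ and set $t=t(I):=\binom{2k}{k-1}-|I\cap\K_x|$, the distance of $I$ from the nearest star. If $t=0$ then $I=\K_x$; if $t=1$ then $I=\{A\}\cup(\K_x\setminus\{B\})$ is literally a near-star, so it has an edge in $K_{\tau_{\textnormal{near}}}$ by the definition of $\tau_{\textnormal{near}}$. So the task is to rule out independent $I$ with $t\geq2$, and I would treat $t=2$ and $t\geq3$ separately.

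For $t=2$, write $I=\{A,A'\}\cup(\K_x\setminus\{B,B'\})$ with distinct $A,A'\notin\K_x$ and distinct $B,B'\in\K_x$. The point, special to $n=2k+1$, is a reducibility lemma: at least one of the four pairs $\{A,B\},\{A,B'\},\{A',B\},\{A',B'\}$ is intersecting. Indeed, if all four were disjoint, then $B\setminus\{x\}$ and $B'\setminus\{x\}$ would both be $(k-1)$-subsets of $[2k+1]\setminus(\{x\}\cup A\cup A')$, a set of size $2k-|A\cup A'|\leq k-1$ (using $|A\cup A'|\geq k+1$), forcing $B\setminus\{x\}=B'\setminus\{x\}$ and hence $B=B'$, a contradiction. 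Say $A\cap B'\neq\emptyset$; then for the near-star $N:=(I\setminus\{A'\})\cup\{B'\}=\{A\}\cup(\K_x\setminus\{B\})$, every edge of $K(2k+1,k)$ with both endpoints in $N$ also has both endpoints in $I$ (the only possible new disjoint pair, $\{A,B'\}$, is not disjoint), so $I$ independent in $K_{\tau_{\textnormal{near}}}$ would force $N$ independent as well---contradicting the choice of $\tau_{\textnormal{near}}$. The other three cases are symmetric. Thus w.h.p.\ no independent $I$ has $t=2$.

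The case $t\geq3$ is the main obstacle. I would first reduce to a fixed-density statement: the event ``$K_i$ has no independent set of size $\binom{2k}{k-1}$ with $t\geq3$'' is increasing in $i$, and a standard second moment argument (near-stars are abundant below density $3/4$) gives $\tau_{\textnormal{near}}\geq(1-\ep')\cdot\frac34\cdot|E(K(2k+1,k))|$ w.h.p.\ for a small constant $\ep'>0$; so it suffices to show that $K_p(2k+1,k)$ with $p=(1-\ep')\frac34$ has w.h.p.\ no independent set of size $\binom{2k}{k-1}$ at distance $\geq3$ from every star. For this one wants a union bound, summing $(1-p)^{e(I)}$ over all such $I$, where $e(I)$ is the number of disjoint pairs of members of $I$. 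But the easy estimates---$e(I)\geq t(k-t)$, and at most $(2k+1)\binom{2k}{k-1}^t\binom{2k}{k}^t$ choices with $t(I)=t$---are off by a constant factor in the exponent, so the naive bound diverges. Closing the gap requires a quantitative stability/removal lemma for intersecting-type families in $\binom{[2k+1]}{k}$: a family of size $\binom{2k}{k-1}$ with $e$ disjoint pairs should lie within distance $O(e/k)$ of a star, and there should be at most $2^{O(e\log k/k)}$ such families (up to polynomial factors), whence $\sum_{t\geq3}\sum_e(\#I)\,(1-p)^{e}=o(1)$. This is precisely the $n=2k+1$ regime not covered by the Das--Tran removal lemma, and establishing such a lemma---or, as an alternative route, iterating the $t=2$ reducibility argument while controlling the disjointness constraints it creates---is where the real work lies; it should rest on the same machinery underlying Theorems~\ref{thm::main} and~\ref{thm::hitting:indep}.

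Putting the cases together: w.h.p.\ $\alpha(K_{\tau_{\textnormal{near}}})=\binom{2k}{k-1}$ and every independent set of that size has $t=0$, i.e.\ is a star, so $K_{\tau_{\textnormal{near}}}$ is EKR and $\tau_{\textnormal{EKR}}\leq\tau_{\textnormal{near}}$; with the trivial reverse inequality this yields $\tau_{\textnormal{EKR}}=\tau_{\textnormal{near}}$ w.h.p.
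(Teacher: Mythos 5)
Your trivial direction $\tau_{\text{EKR}}\geq\tau_{\text{near}}$, your use of $\tau_{\text{super}}\leq\tau_{\text{near}}$ together with Theorem~\ref{thm::hitting:indep} to get $\alpha(K_{\tau_{\text{near}}})=\binom{2k}{k-1}$ w.h.p., and your $t=2$ reducibility argument are all correct; the pigeonhole observation that for $n=2k+1$ at least one of the four cross pairs must intersect is a nice self-contained fact (it does not appear in this elementary form in the paper, where the analogous reducibility is delivered through Lemma~\ref{lem::iso} and Proposition~\ref{prop::T2reduc}). But there is a genuine gap at $t\geq 3$, which you yourself flag as ``where the real work lies.'' Theorem~\ref{thm::hitting:indep}, used as a black box, only controls the independence number and says nothing about the structure of maximum independent sets, so it cannot rule out a non-star $I$ with $t(I)\geq 3$; and your $t=2$ pigeonhole does not iterate: once $t\geq 3$ the complement of $\bigcup_i A_i$ in $[2k+1]\setminus\{x\}$ need not be small enough to pin down the $B_j$, and the disjointness constraints accumulated during ``iterated reduction'' are precisely what you would need to control, with no argument that the process terminates. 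As written the proof is incomplete at its main step.

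For comparison, the paper does not split on $t$ and does not deduce Theorem~\ref{thm::hitting:EKR} from Theorem~\ref{thm::hitting:indep}; both are short corollaries of a single characterization, Proposition~\ref{prop::hitreduc}, which says that w.h.p.\ the only independent $\F\in\T$ in $K_{q_1}(n,k)$ have the form $\{A_1,\dots,A_m\}\cup(\K_x\setminus\{B_1,\dots,B_m\})$ with $E_H(\F,\K_x)$ exactly the matching $\{A_iB_i\}$. Any such $\F$ contains an independent near-star $\{A_1\}\cup(\K_x\setminus\{B_1\})$, which is forbidden at time $\tau_{\text{near}}$, and Theorem~\ref{thm::hitting:EKR} follows immediately. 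The work resides in Proposition~\ref{prop::hitreduc}: it combines the lower bound on $e(\F)$ from Theorem~\ref{thm::eF} and Lemma~\ref{lem::bigeF}, the container-style counting of $\A_\F^{\geq\delta}$ and $\B_\F^{\geq\delta}$ (Lemmas~\ref{lem::Ahigh} and~\ref{lem::Bhigh}, applied in Proposition~\ref{prop::large k large a}), the Kruskal--Katona expansion (Lemma~\ref{lem::iso}) to enforce $\B_\F\subseteq N(\A_\F)$, and the $2$-linked component reduction of Lemma~\ref{lem::2linked} and Proposition~\ref{prop::large k small a}. Your proposal replaces none of this for $t\geq 3$; it reduces the theorem to the same hard counting problem without solving it.
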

In fact, $p_0$ is a sharp threshold function for both lacking independent superstars and lacking independent near-stars, so Theorem~\ref{thm::hitting:EKR} proves Theorem~\ref{thm::main} for $n=2k+1$. But since $\tau_{\text{super}} < \tau_{\text{near}}$ deterministically, neither Theorem~\ref{thm::hitting:indep} nor Theorem~\ref{thm::hitting:EKR} implies the other.

A full resolution of Question~\ref{q::main} would be the proof of hitting time theorems for all $n$ and $k$. We believe this ought to be true.
\begin{conjecture}\label{conj::hitting}
With high probability, for all $n>2k+1$, $\tau_\alpha = \tau_{\textnormal{super}}$ and $\tau_{\textnormal{near}} = \tau_{\textnormal{EKR}}$.
\end{conjecture}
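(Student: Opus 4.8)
The plan is to extend the strategy behind Theorems~\ref{thm::hitting:indep} and~\ref{thm::hitting:EKR} from $n=2k+1$ to all $n>2k+1$. Write $N=\binom{n-1}{k}$ and $M=\binom{n-k-1}{k-1}$, so that for $n>2k+1$ one has $p_0=\log(nN)/M$ and the expected number of independent superstars at probability $p$ is $nN(1-p)^M=(nN)^{1-p/p_0+o(1)}$. Since $\tau_{\text{super}}\le\tau_\alpha$ and $\tau_{\text{near}}\le\tau_{\text{EKR}}$ hold deterministically, it suffices to prove the reverse inequalities with high probability. I describe the argument for $\tau_\alpha=\tau_{\text{super}}$; the EKR/near-star statement is entirely analogous, with ``superstar'' and ``independent set of size $\binom{n-1}{k-1}+1$'' replaced throughout by ``near-star'' and ``non-star independent set of size $\binom{n-1}{k-1}$''.

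Two ingredients are needed. First (\emph{the narrow window}): pick $\delta=\delta(n)\to 0$ slowly enough that $(nN)^{\delta}$ is polylogarithmic in $nN$, and set $p_\pm=(1\pm\delta)p_0$. Exactly as in \cite{BNR,DT} and Observations~\ref{obs::thresh} and~\ref{obs::threshupper}, first- and second-moment estimates on the number of independent superstars show that with high probability there is an independent superstar at probability $p_-$ but none at $p_+$, so the hitting time obeys $\tau(p_-)<\tau_{\text{super}}\le\tau(p_+)$, where $\tau(q)$ is the hitting time corresponding to probability $q$ under the standard coupling between the process and $K_q(n,k)$. Second (\emph{robust structure}): we need the stability theorem underlying Theorem~\ref{thm::main}, but applied at probability $p_-$ rather than at a fixed multiple of $p_0$: with high probability, every independent set $\F$ of $K_{p_-}(n,k)$ with $|\F|\ge\binom{n-1}{k-1}+1$ has the form $\F=(\K_x\setminus\mathcal{R})\cup\mathcal{S}$ with $x\in[n]$, $\mathcal{R}\subseteq\K_x$, $\mathcal{S}\cap\K_x=\emptyset$ and $|\mathcal{R}|\le t_0$, where $t_0=t_0(n)$ is a slowly growing (constant or polylogarithmic) bound; when $|\F|=\binom{n-1}{k-1}+1$ this forces $|\mathcal{S}|=|\mathcal{R}|+1$. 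The present paper establishes exactly this kind of stability for all $n>2k+1$ in the course of proving Theorem~\ref{thm::main}; the point here is to re-run that argument at $(1-\delta)p_0$, keeping $t_0$ small, which its methods should accommodate.

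With these in hand, the heart of the matter is a ``no coincidences'' step. By the narrow window it is enough to show that with high probability, for every $i\in[\tau(p_-),\tau(p_+)]$, if $K_i(n,k)$ has no independent superstar then $\alpha(K_i(n,k))=\binom{n-1}{k-1}$; taking $i=\tau_{\text{super}}$ then gives $\tau_\alpha\le\tau_{\text{super}}$. Suppose instead that for some such $i$ there is an independent $\F$ with $|\F|=\binom{n-1}{k-1}+1$. Independence only decreases along the process, so the structure theorem applies at time $i$, giving $\F=(\K_x\setminus\mathcal{R})\cup\mathcal{S}$ with $t:=|\mathcal{R}|\le t_0$ and $|\mathcal{S}|=t+1$. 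If $t=0$ then $\F$ is itself an independent superstar, a contradiction. If $t\ge 1$, then for each $A\in\mathcal{S}$ all of the $\ge M-t$ disjoint pairs of $A$ with $\K_x\setminus\mathcal{R}$ are non-edges of $K_i(n,k)$ (as $\F$ is independent), so the only way $\K_x\cup\{A\}$ can fail to be an independent superstar is that $A$ is disjoint from some $B\in\mathcal{R}$ and some such pair $AB$ is an edge of $K_i(n,k)$. Distinct $A\in\mathcal{S}$ give distinct such ``rescue'' edges, and these $t+1$ rescue edges are disjoint from the $\ge(t+1)(M-t)$ disjoint pairs inside $\F$, which must all be non-edges. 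Since $\tau(p_-)\le i\le\tau(p_+)$, the within-$\F$ pairs are then non-edges at $p_-$ while the rescue edges are present at $p_+$; bounding the expected number of such $\F$ by a union bound over $x,\mathcal{R},\mathcal{S}$ — using that each $A\in\mathcal{S}$ is disjoint from one of the $k$-sets of $\mathcal{R}$, so $\mathcal{S}$ has at most $\big(t\binom{n-k}{k}\big)^{t+1}$ choices, together with the identity $\binom{n-k}{k}/\binom{n-k-1}{k-1}=(n-k)/k$ — reduces everything to checking that
\[ \operatorname{poly}(t)\,(\log nN)^{t+1}\,\Big(\tfrac{n-k}{k}\Big)^{t+1} n^{-t+\delta(t+1)}\,N^{-1+\delta(t+1)}=o(1) \]
uniformly over $1\le t\le t_0$ and all $n>2k+1$. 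This holds after a short case split into the regimes $n=2k+O(1)$, $2k<n\le Ck$ and $n\gg k$ (the factor $\big(\tfrac{n-k}{k}\big)^{t+1}n^{-t}$ is at most $n/k^{t+1}$, while $N^{-1+\delta(t+1)}$ decays geometrically), and summing over $t\le t_0$ finishes the step.

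The main obstacle is this last step: the Kneser graph changes character dramatically across the range $n>2k+1$ — from nearly a perfect matching when $n$ is close to $2k$ to very dense neighbourhoods when $n\gg k$ — so the union bound in the ``no coincidences'' argument, and the interplay of $n$, $N$, $M$ and the window width $\delta$, must be balanced separately in each regime, and one must rule out that any almost-star independent set of size $\binom{n-1}{k-1}+1$ lingers as long as the last independent superstar. A secondary difficulty is extracting the stability theorem precisely at $(1-\delta)p_0$ — rather than at a fixed constant multiple of $p_0$ as in \cite{DT,DK} and Theorem~\ref{thm::main} — with the bound $t_0$ on $|\mathcal{R}|$ small enough that the sum over $t$ in the no-coincidences step converges.
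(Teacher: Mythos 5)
This statement is Conjecture~\ref{conj::hitting}, which the paper explicitly presents as \emph{open}: the authors state that their methods extend to $n-2k=o(n)$, that a careful reading of \cite{BNR,DT} handles $k=o(n)$, but that they do not know how to handle, e.g., $n=4k$. The remark at the end of Section~\ref{sec::hit:proof} pinpoints two concrete obstructions that your plan does not overcome: the auxiliary graph $J_x(n,k)$ underlying the 2-linked counting (Lemmas~\ref{lem::treebuilding} and~\ref{lem::2linked}) becomes complete once $n$ is a moderate multiple of $k$, so the container-style count of $\A_\F$ collapses; and for $k=\Omega(n)$ with $n-2k=\Omega(n)$ the existing estimates only control families with $a_\F=o(n)$. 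So there is no ``paper's own proof'' to compare against.

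As for your proposal itself, the decisive gap is in the second ingredient, which you call a ``secondary difficulty'' but which is actually the crux. You need a structure theorem for independent sets of size $\binom{n-1}{k-1}+1$ at $p_-=(1-\delta)p_0$ saying that every such set is $(\K_x\setminus\mathcal R)\cup\mathcal S$ with $|\mathcal R|\le t_0$ small. But the paper's structural argument for $2k+1<n\le Ck$ is built on the minimum-degree assumption (Lemma~\ref{lem::minDeg}): w.h.p.\ every $A\notin\K_x$ sends at least $\delta k$ edges into $\K_x$ in $K_p(n,k)$. That holds only for $p>p_0$. At $p_-<p_0$ the expected number of $(x,A)$ pairs with $d(A,\K_x)=0$ is $nN(1-p_-)^M=(nN)^{\delta+o(1)}\to\infty$, and a second-moment calculation shows w.h.p.\ there really are such pairs --- this is exactly the statement that independent superstars exist below $p_0$. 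So the minimum-degree hypothesis fails deterministically in the regime where you invoke it, and with it Propositions~\ref{prop:fewlowdeg} and onward; ``re-running the argument at $(1-\delta)p_0$'' is not a cosmetic change but requires a genuinely new idea. Without a substitute for the minimum-degree lemma there is no bound $t_0$ on $|\mathcal R|$, and the ``no coincidences'' union bound you set up over $t\le t_0$ has nothing to sum over. Separately, even the $p_+$-side structure at constant $\ep$ relies, when $n-2k$ is small, on Lemma~\ref{lem::2linked}, which degrades badly as $n/k$ grows; you would also need to replace that before the argument covers the range $n\approx 4k$ the authors single out. In short, the high-level scaffold (narrow window, structure, no coincidences) is the right shape for a hitting-time proof, but the structural input it needs below threshold is precisely what is missing, and what makes this a conjecture rather than a theorem.
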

Our methods for Theorems~\ref{thm::hitting:EKR} and~\ref{thm::hitting:indep} extend easily to when $n-2k = o(n)$, and a careful analysis of \cite{BNR,DT} proves Conjecture~\ref{conj::hitting} when $k = o(n)$. To name a particular case of Conjecture~\ref{conj::hitting} which we do not know how to solve, consider $n=4k$. See Section~\ref{sec::hit:proof} for some technical remarks on Conjecture~\ref{conj::hitting}.

Having fully addressed Question~\ref{q::main}, we now propose an extension of Theorems~\ref{thm::hitting:indep} and~\ref{thm::hitting:EKR}, and we mention other modern extensions of the Erd\H{o}s-Ko-Rado theorem. With our techniques, one should be able to obtain finer information about the independent sets of $K_p(n,k)$ for arbitrary $p$, possibly answering the following generalization of Question~\ref{q::main}: what is the size of the largest independent set of $K_p(n,k)$ not contained in a star? Conjecture~\ref{conj::hitting} determines when the size of the largest independent set not contained in a star is at most $\binom{n-1}{k-1}-\ell+1$, for $\ell\in\{1,2\}$. The Hilton-Milner theorem~\cite{HM} states that the largest independent set of $K(n,k)$ not contained in a star has size $\binom{n-1}{k-1} - \binom{n-k-1}{k-1} + 1$. Thus we can formulate a generalization of Conjecture~\ref{conj::hitting} as follows. Define
\[ \tau_\ell = \min\{i : \text{ for every $x \in [n]$ and $A \not\in \K_x$, $d_{K_i(n,k)}(A,\K_x) \geq \ell$} \} ,\]
where $d_G(v,S)$ is the number of neighbors in $G$ of $v$ in $S \subseteq V(G)$. Note that $\tau_1 = \tau_{\text{super}}$, $\tau_2 = \tau_{\text{near}}$, and for $\ell = \binom{n-k-1}{k-1}$, $\tau_\ell = |E(K(n,k))|$ always. In this way, the $\tau_\ell / |E(K(n,k))|$ fill the space between $p_0$ and $1$.
\begin{question}
Does the largest independent set of $K_{\tau_\ell}(n,k)$ not contained in a star have size $\binom{n-1}{k-1} - \ell + 1$ with high probability?
\end{question}
We hesitate to call this a conjecture, because while presumably our techniques would work for small $\ell$, the situation for large $\ell$ is less clear.

Our proof method does not require too much structural information about $K(n,k)$; in fact, all that is needed is some regularity in the degrees, that the maximum independent sets are `spread out' in some sense, and some edge-isoperimetry, which is some upper bound on the number of edges a set of vertices can contain. The Kneser graph is a special class of so-called distance graphs, where similar problems have been studied~\cite{Dist}, so it is likely our methods may be applied there as well. Arguably the most studied distance graph is the hypercube. There are many results concerning various graph parameters of the \mbox{(edge-)random} subgraph of the hypercube; see~\cite{CubeSurv} for a comprehensive, but slightly outdated, survey. After writing this paper, we learned that some of those results use a method similar to our Lemma~\ref{lem::Ahigh}. In particular, Kostochka~\cite{Kos} gives a result similar to our Lemma~\ref{lem::Ahigh} but in greater generality.

A more difficult, but natural, random variant of the Erd\H{o}s-Ko-Rado theorem was proposed by Balogh, Bohman, and Mubayi~\cite{BBM}: instead of taking a random spanning subgraph of $K(n,k)$, we take a random induced subgraph of $K(n,k)$, including each vertex with probability $p$. For what $p$ is a maximum independent set of this random induced subgraph contained in a star? This is more naturally stated in the hypergraph version of the Erd\H{o}s-Ko-Rado theorem, where we retain hyperedges with probability $p$. Balogh, Bohman, and Mubayi~\cite{BBM} proved a number of results, mostly for $k \leq n^{1/2-\ep}$. This is a particularly difficult problem because the property of interest is not monotone; for small $p$ and large $p$ the property holds, but sometimes for some $p$ in between, the property does not hold. Hamm and Kahn improved the results for $n^{1/3} \ll k \leq \frac{1}{2}\sqrt{n\log n}$~\cite{HKI} and $n=2k+1$~\cite{HKII}. Gauy, H\`an and Oliveira~\cite{GHO} extended~\cite{BBM} and gave the asymptotic size of largest intersecting family for all $k$ and almost all $p$. Balogh, Das, Delcourt, Liu, and Sharifzadeh~\cite{BDDLS} gave additional results up to $k \leq n/4$, but generally not as tight as~\cite{HKI}. Hamm and Kahn~\cite{HKII} optimistically conjecture that, just like in the problem addressed in this paper, the maximum independent sets are contained in stars (roughly) when the stars induce maximal independent sets.

Balogh, Das, Delcourt, Liu, and Sharifzadeh~\cite{BDDLS} also study the enumeration variant of the Erd\H{o}s-Ko-Rado theorem, determining the order of magnitude of the log of the number of independent sets in $K(n,k)$. Balogh, Das, Liu, Sharifzadeh, and Tran~\cite{BDLST} and independently Kupavskii and Frankl~\cite{FK} strengthened this to say that most independent sets of $K(n,k)$ are contained in stars for $n \geq 2k+c\sqrt{k\log k}$, where $c$ is a large constant in~\cite{BDLST} and $c=2$ in~\cite{FK}. Balogh, Garcia, Li, and Wagner~\cite{BGLW} push this $c\sqrt{k\log k}$ down to $100 \log k$. They conjecture it is true down to $n\geq 2k+2$, since a simple computation shows that the families which show the tightness of the Hilton-Milner theorem~\cite{HM} outnumber the trivial families for $n=2k+1$.

Inspired by Lov\'asz's determination~\cite{LovaszKneser} of the chromatic number of $K(n,k)$ and subsequent work, Kupavskii~\cite{KupKneser} studied the chromatic number of $K_p(n,k)$, giving bounds for a wide range of $n$, $k$ and $p$. The methods used are topological and not related to ours.

The rest of the paper is organized as follows. In Section~\ref{sec::prelim} we collect results from prior works that we use in our proofs. In Section~\ref{sec:main lemma} we prove a lemma on counting sets of vertices of $K(n,k)$, in a similar spirit to the container method~\cite{BMS,ST}. We prove Theorems~\ref{thm::hitting:indep} and \ref{thm::hitting:EKR} in Section~\ref{sec::hit} almost simultaneously, definitively answering the $n=2k+1$ case of Question~\ref{q::main}. Section~\ref{sec::hit:proof} contains some technical remarks concerning Conjecture~\ref{conj::hitting}. In Section~\ref{sec::largen}, we prove Theorem~\ref{thm::main} for $2k+1 < n \leq Ck$, where $C$ is a large constant, the other cases being proven by Theorem~\ref{thm::hitting:EKR} or~\cite{BNR,DT}.

\subsection{Standard estimates and notation}\label{sec::intro:estimates}

All logarithms are base $e$. We make extensive use of standard asymptotic notation to simplify our calculations. We say $f(n) = O(g(n))$ (resp.\ $f(n) = \Omega(g(n))$) if there exists $C>0$ such that $f(n) \leq C g(n)$ (resp.\ $f(n) \geq Cg(n)$) for all $n$ sufficiently large. If $f(n) = O(g(n))$ and $f(n) = \Omega(g(n))$, then we say $f(n) = \Theta(g(n))$. We say $f(n) = o(g(n))$ (resp.\ $f(n) = \omega(g(n))$) if $f(n)/g(n) \to 0$ (resp.\ $\infty$) as $n\to\infty$. Since there is only one variable, $n$, tending to infinity, with the other variables being clearly dependent or clearly independent of $n$, we find the asymptotic notation unambiguous. Still, we use the notation judiciously. If desired, one could eliminate the use of asymptotic notation from this paper entirely, being explicit throughout with constant or logarithmic factors.

We often make use of the standard bound $(\frac{r}{s})^s \leq \binom{r}{s} \leq (\frac{er}{s})^s$ in the weaker form
\[ \binom{r}{s}, \binom{r}{\leq s} = \exp\left( \Theta\left( s\log\frac{r}{s} \right) \right) \]
for $s \leq .99r$. We also use the following version of the Chernoff bound. We use $\P(E)$ to denote the probability of the event $E$ and $\E[X]$ to denote the expected value of a random variable $X$.
\begin{lemma}[Chernoff Bound]\label{lem::cher}
If $X$ is binomially distributed with mean $\mu$, then for $0 \le \alpha \le 1$ and $1<\beta$, we have the lower tail bound
\begin{equation}
\P(X\le \alpha \mu) \leq \exp\left( -(1-\alpha + \alpha\log\alpha) \mu \right) \le \exp\left(-(1-2\sqrt{\alpha})\mu\right)
\end{equation}
and the upper tail bound
\begin{equation}
\P(X\ge \beta \mu) \leq \exp\left( -(1-\beta+\beta\log\beta) \mu \right) \le \exp\left(- \mu \beta \log (\beta/e) \right) .
\end{equation}
\end{lemma}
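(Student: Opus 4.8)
The plan is the textbook exponential-moment (Chernoff--Bernstein) argument, followed by two elementary monotonicity estimates to pass to the simplified tail bounds. Write $X = X_1 + \dots + X_N$ as a sum of independent Bernoulli random variables, so that $X \sim \mathrm{Bin}(N,q)$ with $Nq = \mu$ (the same computation works verbatim for any sum of independent Bernoullis with $\sum_i \E[X_i] = \mu$). For every real $t$, independence and $1+x \le e^x$ give
\[ \E\!\left[e^{tX}\right] = \prod_{i=1}^N \left(1 - q + q e^t\right) = \left(1 + q(e^t-1)\right)^N \le \exp\!\left(\mu(e^t-1)\right). \]

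For the upper tail, fix $\beta>1$ and $t>0$. Markov's inequality applied to $e^{tX}$ yields
\[ \P(X \ge \beta\mu) = \P\!\left(e^{tX} \ge e^{t\beta\mu}\right) \le e^{-t\beta\mu}\,\E\!\left[e^{tX}\right] \le \exp\!\left(\mu(e^t-1) - t\beta\mu\right). \]
The exponent $\mu(e^t - 1 - t\beta)$ is minimized over $t$ at $e^t = \beta$, i.e.\ $t = \log\beta > 0$, giving $\P(X \ge \beta\mu) \le \exp\!\left(-(1 - \beta + \beta\log\beta)\mu\right)$, the first claimed bound. For the weaker form, observe that $\beta\log(\beta/e) = \beta\log\beta - \beta$, so $1 - \beta + \beta\log\beta = \beta\log(\beta/e) + 1 \ge \beta\log(\beta/e)$, and the bound $\exp\!\left(-\mu\beta\log(\beta/e)\right)$ follows. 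For the lower tail, the case $\alpha = 0$ is immediate since $\P(X \le 0) = \P(X=0) = (1-q)^N \le e^{-\mu}$ matches both stated bounds (with the convention $0\log 0 = 0$). For $0 < \alpha \le 1$ and $t < 0$, the identical Markov argument gives $\P(X \le \alpha\mu) \le \exp\!\left(\mu(e^t - 1 - t\alpha)\right)$, and now the exponent is minimized at $t = \log\alpha \le 0$, producing $\P(X \le \alpha\mu) \le \exp\!\left(-(1 - \alpha + \alpha\log\alpha)\mu\right)$.

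It remains to verify the simplified lower-tail inequality $1 - \alpha + \alpha\log\alpha \ge 1 - 2\sqrt{\alpha}$ for $0 < \alpha \le 1$, equivalently $\alpha\log\alpha - \alpha + 2\sqrt{\alpha} \ge 0$. Substituting $\alpha = u^{-2}$ with $u \ge 1$, this becomes $u^{-2}\left(2u - 1 - 2\log u\right) \ge 0$, so it suffices that $\phi(u) := 2u - 1 - 2\log u \ge 0$ on $[1,\infty)$; this holds because $\phi(1) = 1 > 0$ and $\phi'(u) = 2 - 2/u \ge 0$ for $u \ge 1$. Honestly, there is no substantive obstacle here — this is a standard estimate — and the only point requiring any care is checking that the two simplified tail bounds hold on the full ranges $0 \le \alpha \le 1$ and $\beta > 1$, including the degenerate endpoint $\alpha = 0$, which the short calculus arguments above dispatch.
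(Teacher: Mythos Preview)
Your argument is correct: the exponential-moment computation and the two calculus checks for the simplified bounds are both valid as written. Note, however, that the paper does not supply its own proof of this lemma --- it is stated as a standard estimate and used as a black box --- so there is nothing to compare your approach against; you have simply filled in a routine detail that the authors chose to omit.
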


\section{Preliminaries}\label{sec::prelim}

We make the dependence of $k$ on $n$ explicit wherever possible, and we assume that $n$ is sufficiently large for our calculations to go through. In Section~\ref{sec::hit}, we consider only $n=2k+1$, and in Section~\ref{sec::largen} we consider only $2k+1 < n \leq Ck$, where $C$ is a sufficiently large constant, since Theorem~\ref{thm::main} was already proven for $n>Ck$ in~\cite{BNR,DT}. Despite this, we try to make statements not assuming any relationship between $k$ and $n$ so that our intermediate results are as useful as possible.

Let $\T = \{\F \subseteq V(K(n,k)) : |\F| = \binom{n-1}{k-1}, \text{ $\F$ is not a star}\}$. By the union bound, the probability that $K_p(n,k)$ is not EKR is at most
\begin{equation}\label{eq::unionbound}
\sum_{\F\in\T} (1-p)^{e(\F)} .
\end{equation}
We are done if we can show that \eqref{eq::unionbound} is $o(1)$ for the desired $p$. Unfortunately, $\T$ appears to be too large for this strategy to give a sharp threshold. Our strategy is to use the lower bounds on $e(\F)$ from~\cite{DT,DK}, refine $\T$, and give improved bounds on the size of the refinement to make this strategy successful.

First, we introduce a framework which appeared in~\cite{DK}, although similar ideas appear in prior papers. Consider $\F \subseteq V(K(n,k))$ of size $\binom{n-1}{k-1}$. Let $x_\F$ be the smallest $x \in [n]$ minimizing $|\F \setminus \K_x|$. For ease of notation here, we write $x$ for $x_\F$. Let $\A_\F = \F \setminus \K_x$, and $a_\F = |\A_\F|$. This $a_\F$ measures the `distance' from $\F$ to the nearest star.\footnote{In~\cite{Kupavskii}, $a_\F$ is called the \emph{diversity} of $\F$. The earliest extensions of the Erd\H{o}s-Ko-Rado theorem, for example~\cite{HM}, gave the maximum size of independent $\F \subseteq V(K(n,k))$ satisfying restrictions on $a_\F$.} Let $\B_\F = \K_x \setminus \F$, so that $|\B_\F| = a_\F$ since $|\F| = |\K_x|$. Note that $\A_\F \subseteq \binom{[n]\setminus \{x\}}{k}$ and $\B_\F \subseteq \K_x$; by $\bar{\A}_\F$ and $\bar{\B}_\F$ we mean $\bar{\A}_\F = \binom{[n]\setminus \{x\}}{k} \setminus \A_\F$ and $\bar{\B}_\F = \K_x \setminus \B_\F = \K_x \cap \F$. Since $\bar\B_\F$ is an intersecting set system, $E(\F)$, the set of edges spanned by $\F$, is partitioned into $E(\A_\F)$ and $E(\A_\F, \bar\B_\F)$, the set of edges with one endpoint in $\A_\F$ and the other in $\bar\B_\F$. We mostly focus on the bipartite structure between $\A_\F$ and $\B_\F$,\footnote{It can be helpful to see the hypercube in disguise in these definitions. If one takes the complements of every set of $\A_\F$ in $[n]\setminus\{x\}$, and one also removes the element $x$ from every set of $\B_\F$, then the disjointness relation between $\A_\F$ and $\B_\F$ becomes the subset-superset relation with ground set $[n]\setminus\{x\}$.} but whenever we discuss neighborhoods, denoted by $N(\A)$ for $\A \subseteq V(K(n,k))$, and edges, with $e(\A) = |E(\A)|$ and $e(\A,\A') = |E(\A,\A')|$, the context is always $K(n,k)$ unless explicitly stated otherwise. To summarize,
\[ \F = \A_\F \cup (\K_{x_\F} \setminus \B_\F) ,\]
where $x_\F$ is chosen to minimize $|\A_\F| = |\B_\F|$. Note that if $a_\F = 0$, then $\F$ is a star, and if $a_\F = 1$, then $\F$ is a near-star. We drop the $\F$ subscript when it is clear from context.

We begin with three helpful observations about the above framework. First, by a simple averaging argument, we observe that for every $\F \in \T$, $a_\F$ is not too large.
\begin{observation}\label{obs::abound}
For every $\F \in \T$, $a_\F \leq \frac{n-k}{n} \binom{n-1}{k-1}$.
\end{observation}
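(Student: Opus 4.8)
The plan is to argue by contradiction via an averaging (double counting) argument over the $n$ choices of basepoint. Fix $\F \in \T$, and for each $x \in [n]$ write $a_x = |\F \setminus \K_x|$; by definition $a_\F = \min_x a_x$, so it suffices to bound the average $\frac{1}{n}\sum_{x\in[n]} a_x$ from above by $\frac{n-k}{n}\binom{n-1}{k-1}$.

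First I would compute $\sum_{x\in[n]} a_x$ by switching the order of summation: $\sum_{x\in[n]} a_x = \sum_{x\in[n]} |\{S \in \F : x \notin S\}| = \sum_{S\in\F} |\{x\in[n] : x\notin S\}| = \sum_{S\in\F}(n-k)$, since each $S\in\F$ has size exactly $k$ and hence misses exactly $n-k$ elements of $[n]$. As $|\F| = \binom{n-1}{k-1}$, this gives $\sum_{x\in[n]} a_x = (n-k)\binom{n-1}{k-1}$ exactly, with no inequality lost. Therefore $a_\F = \min_x a_x \leq \frac{1}{n}\sum_x a_x = \frac{n-k}{n}\binom{n-1}{k-1}$, which is the claimed bound.

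Since this is a clean exact identity followed by min-vs-average, there is essentially no obstacle here; the only mild subtlety is that $x_\F$ is defined as the \emph{smallest} minimizer, but that does not affect the bound since any minimizer attains $a_\F$. One could even note the bound is tight up to the choice of tie-breaking when $\F$ is a star-like family, but for the statement as given nothing further is needed. I expect the hard work of the paper to lie entirely in the later, more delicate counting and container-type lemmas; this observation is just a convenient normalization to be used repeatedly afterward.
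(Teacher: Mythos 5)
Your proof is correct and is essentially the same double-counting-plus-averaging argument as in the paper; the paper phrases it as $\sum_{x}|\F\cap\K_x| = k|\F|$ and subtracts, while you count $\sum_x|\F\setminus\K_x| = (n-k)|\F|$ directly, which is just the complementary bookkeeping of the identical idea.
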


\begin{proof}
Note that $\sum_{x\in[n]} |\F\cap \K_x| = k|\F|$ for any $\F$. Thus for some $x$ we have
\[ |\F \setminus \K_x| = \binom{n-1}{k-1} - |\F \cap \K_x| \leq \frac{n-k}{n} \binom{n-1}{k-1} .\qedhere\]
\end{proof}

Second, unless $a_\F$ is very large, $\F$ is not close to $\K_y$ for any $y \neq x_\F$.
\begin{observation}\label{obs::farstar}
For every $\F \in \T$ and $y \in [n]$ with $y \neq x_\F$, we have
\[ |\F \setminus \K_y| \geq \binom{n-2}{k-1} - |\F \setminus \K_x| .\]
\end{observation}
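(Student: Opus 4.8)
The plan is to directly compare $|\F \setminus \K_y|$ with $|\F \setminus \K_x|$ by counting the sets in $\F$ that lie in one star but not the other. First I would write $\F \setminus \K_y = (\F \setminus (\K_x \cup \K_y)) \cup (\F \cap \K_x \setminus \K_y)$, a disjoint union, so that
\[ |\F \setminus \K_y| \geq |\F \cap \K_x \setminus \K_y| = |\F \cap \K_x| - |\F \cap \K_x \cap \K_y|. \]
Now $|\F \cap \K_x| = \binom{n-1}{k-1} - a_\F$ by definition, and $\K_x \cap \K_y = \{S : x, y \in S\}$ has size $\binom{n-2}{k-2}$, so $|\F \cap \K_x \cap \K_y| \leq \binom{n-2}{k-2}$. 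Combining, $|\F \setminus \K_y| \geq \binom{n-1}{k-1} - a_\F - \binom{n-2}{k-2}$. Since $\binom{n-1}{k-1} - \binom{n-2}{k-2} = \binom{n-2}{k-1}$ by Pascal's identity, this gives exactly $|\F \setminus \K_y| \geq \binom{n-2}{k-1} - a_\F = \binom{n-2}{k-1} - |\F \setminus \K_x|$, as desired.

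An alternative route, which I would mention as a sanity check, is to use the minimality of $x_\F$: we have $|\F \setminus \K_y| \geq |\F \setminus \K_x| = a_\F$ trivially, but that bound is too weak. The averaging identity $\sum_z |\F \cap \K_z| = k|\F|$ is also not quite the right tool here, since it controls the sum rather than an individual term. The cleanest argument is really the inclusion–exclusion one above: the only sets of $\F$ that are "lost" when passing from the star $\K_x$ to the star $\K_y$ are those containing both $x$ and $y$, and there are at most $\binom{n-2}{k-2}$ of these regardless of $\F$.

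I do not anticipate any genuine obstacle; this is a short counting argument and the only thing to be careful about is getting the binomial bookkeeping right (in particular remembering that $|\F \cap \K_x| = \binom{n-1}{k-1} - a_\F$ rather than $\binom{n-1}{k-1}$, and applying Pascal correctly). The role of this observation in the paper is presumably to ensure that the choice of $x_\F$ is essentially unique when $a_\F$ is small — if $\F$ were simultaneously close to two different stars $\K_x$ and $\K_y$, then both $|\F \setminus \K_x|$ and $|\F \setminus \K_y|$ would be small, contradicting the bound once $a_\F \ll \binom{n-2}{k-1}$ — so the statement is exactly the inequality one wants and no strengthening is needed.
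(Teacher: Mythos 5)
Your argument is correct and is essentially the paper's proof: both lower bound $|\F \setminus \K_y|$ by $|(\F \cap \K_x) \setminus \K_y|$ and then count, the paper writing $|\K_x \setminus \K_y| - |\K_x \setminus \F|$ where you write $|\F \cap \K_x| - |\K_x \cap \K_y|$, which is the same quantity rearranged. Your intuitive gloss (the only sets lost when passing from $\K_x$ to $\K_y$ are those containing both $x$ and $y$) and the remark about the role of the observation are both accurate.
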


\begin{proof}
Observe that $|\F \setminus \K_y| \geq |(\K_x \cap \F) \setminus \K_y| \geq |\K_x \setminus \K_y| - |\K_x \setminus \F| = \binom{n-2}{k-1} - |\F \setminus \K_x|$.
\end{proof}

Third, we relate $e(\bar\A_\F,\B_\F)$ to $e(\A_\F,\bar\B_\F)$, the latter of which is more directly related to $e(\F)$.

\begin{observation}\label{obs::AbarB}
For every $\F \in \T$, $e(\bar\A_\F,\B_\F) = \binom{n-k-1}{k}a_\F + e(\A_\F,\bar\B_\F)$.
\end{observation}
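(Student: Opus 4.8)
The identity is a double-counting statement, so the plan is to count the edges of $K(n,k)$ between $\B_\F$ and the whole layer $\binom{[n]\setminus\{x\}}{k}$ in two ways, where $x = x_\F$. Since $\binom{[n]\setminus\{x\}}{k}$ is partitioned into $\A_\F$ and $\bar\A_\F$, one way of counting gives $e(\bar\A_\F,\B_\F) + e(\A_\F,\B_\F)$. The other way is to sum degrees: for each $B \in \B_\F$ we have $x \in B$, so every neighbor of $B$ in $K(n,k)$ is a $k$-subset of $[n]\setminus B$, and in particular avoids $x$; hence the neighborhood of $B$ lies entirely inside $\binom{[n]\setminus\{x\}}{k}$ and has size $\binom{n-k}{k}$. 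Summing over $B \in \B_\F$ gives $e(\bar\A_\F,\B_\F) + e(\A_\F,\B_\F) = \binom{n-k}{k}\,a_\F$.

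Next I would carry out the analogous count for the edges between $\A_\F$ and $\K_x$. For a fixed $A \in \A_\F$ we have $x \notin A$, and a neighbor of $A$ inside $\K_x$ is precisely a set of the form $\{x\} \cup C$ with $C$ a $(k-1)$-subset of $[n] \setminus (A \cup \{x\})$, a ground set of size $n-k-1$; so $A$ has exactly $\binom{n-k-1}{k-1}$ neighbors in $\K_x$. Since $\K_x$ is partitioned into $\B_\F$ and $\bar\B_\F$, summing over $A \in \A_\F$ yields $e(\A_\F,\B_\F) + e(\A_\F,\bar\B_\F) = \binom{n-k-1}{k-1}\,a_\F$.

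Finally I would subtract the two identities to eliminate the common term $e(\A_\F,\B_\F)$, obtaining
\[ e(\bar\A_\F,\B_\F) - e(\A_\F,\bar\B_\F) = \left( \binom{n-k}{k} - \binom{n-k-1}{k-1} \right) a_\F ,\]
and then apply Pascal's rule $\binom{n-k}{k} = \binom{n-k-1}{k} + \binom{n-k-1}{k-1}$ to simplify the coefficient to $\binom{n-k-1}{k}$, which rearranges to the claimed formula.

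There is no real obstacle here; the only points requiring a little care are checking that every neighbor of a set $B \ni x$ indeed avoids $x$ (so that no edges are lost when restricting to the layer $\binom{[n]\setminus\{x\}}{k}$) and getting the two degree counts exactly right. Note that the argument does not actually use $\F \in \T$ beyond having the decomposition $\F = \A_\F \cup (\K_x \setminus \B_\F)$ with $|\A_\F| = |\B_\F| = a_\F$ available, so the statement holds for any such $\F$.
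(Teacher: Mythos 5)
Your proposal is correct and follows essentially the same route as the paper: both arguments use the two degree counts $\binom{n-k}{k}$ (from $\K_x$ into $\binom{[n]\setminus\{x\}}{k}$) and $\binom{n-k-1}{k-1}$ (from $\binom{[n]\setminus\{x\}}{k}$ into $\K_x$), eliminate the common term $e(\A_\F,\B_\F)$, and finish with Pascal's rule. The paper merely compresses the two double counts into a single chain of equalities; your version spells out the same steps.
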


\begin{proof}
The degree of vertices in $\binom{[n]\setminus\{x\}}{k}$ to $\K_x$ is $\binom{n-k-1}{k-1}$, while the degree of vertices in $\K_x$ to $\binom{[n]\setminus\{x\}}{k}$ is $\binom{n-k}{k}$. Thus
\[ e(\bar\A_\F,\B_\F) = \binom{n-k}{k}a_\F - e(\A_\F,\B_\F) = \binom{n-k}{k} a_\F - \binom{n-k-1}{k-1} a_\F + e(\A_\F,\bar\B_\F) \]
\[ = \binom{n-k-1}{k} a_\F + e(\A_\F,\bar\B_\F) .\qedhere\]
\end{proof}

Let
\[ \T_x(a) = \{\F \in \T : x_\F = x,\, a_\F = a \} .\]
We define subsets $\T^i$ of $\T$ for $i\in [5]$ as is convenient. Whenever $\T^i$ is defined, we define $\T^i_x(a)$ to be $\T^i \cap \T_x(a)$. For reference, we list the definitions of all the $\T^i$ here, along with the subsection where they first appear; we introduce them formally when we need them, so some notation is not yet defined:
\begin{flalign*}
\T^1 &= \left\{ \F \in \T : e(\F) \leq \frac{5}{p_0} a_\F \log\frac{\binom{n-1}{k}}{a_\F} \right\} ,\phantom{\Bigg)} & \tag{Section~\ref{sec::prelim:fewef}} \\
\T^2 &= \left\{ \F \in \T^1 : \B_\F \subseteq N(\A_\F) \right\} ,\phantom{\Bigg)} & \tag{Section~\ref{sec::prelim:neigh}} \\
\T^3 &= \left\{ \F \in \T^2 : \F \text{ is 2-linked} \right\} ,\phantom{\Bigg)} & \tag{Section~\ref{sec::hit:2link}} \\
\T^4 &= \left\{ \F \in \T^1 : \left| A_\F^{<1/\sqrt{k}} \right| \leq \frac{a_\F}{\log\log k} \right\} ,\phantom{\Bigg)} & \tag{Section~\ref{sec::largen:Alow}} \\
\T^5 &= \left\{ \F \in \T^4 : e(\A_\F) \leq \frac{1}{2} e(\F) \right\} .\phantom{\Bigg)} & \tag{Section~\ref{sec::largen:eA}}
\end{flalign*}
As an example of how these definitions are used, in Lemma~\ref{lem::bigeF}, we easily show that no $\F \in \T \setminus \T^1$ is independent in $K_p(n,k)$ for $p \geq p_0$ w.h.p., because by definition, the $\F \in \T \setminus \T^1$ have many edges. In other cases, like Proposition~\ref{prop::T2reduc}, we show that if some $\F \in \T^1$ is independent in $K_p(n,k)$, then some $\F \in \T^2$ is independent in $K_p(n,k)$, which reduces the problem of showing that no $\F \in \T^1$ is independent to showing that no $\F \in \T^2$ is independent.

\subsection{Bounding $e(\F)$}\label{sec::prelim:fewef}

Since $\F$ is determined by $x = x_\F$, $\A_\F \subseteq \binom{[n]\setminus\{x\}}{k}$, and $\B_\F \subseteq \K_x$, we trivially have
\begin{equation}\label{eq::trivialF}
|\T_x(a)| \leq \binom{\binom{n-1}{k}}{a} \binom{\binom{n-1}{k-1}}{a} \leq \binom{\binom{n-1}{k}}{a}^2 .
\end{equation}

To obtain a threshold for Question~\ref{q::main}, Devlin and Kahn~\cite{DK} combined \eqref{eq::trivialF} with the following very non-trivial lower bound on $e(\F)$:
\begin{theorem}[\cite{DK}] \label{thm::DK}
There exists $c > 0$ such that for all $n \leq 2k + k/6$ and $\F \in \T$,
\begin{equation}\label{eq::DK}
e(\F) > c \frac{1}{k} \binom{n-k-1}{k-1} a_\F \log\frac{\binom{n-1}{k}}{a_\F} .
\end{equation}
\end{theorem}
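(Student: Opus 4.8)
The plan is to establish~\eqref{eq::DK} by bounding how much of the edge set of $\F$ can be `absorbed' into the missing part $\B_\F$ of the star. Write $x=x_\F$, $a=a_\F$, $\A=\A_\F$, $\B=\B_\F$, and set $L=\log\frac{\binom{n-1}{k}}{a}$. Since $\bar\B_\F=\K_x\cap\F$ is intersecting, $E(\F)$ is the disjoint union of $E(\A)$ and $E(\A,\bar\B)$, and every $S\in\A$ has exactly $\binom{n-k-1}{k-1}$ neighbours in $\K_x$, so
\begin{equation*}
e(\F)=e(\A)+e(\A,\bar\B)=e(\A)+\binom{n-k-1}{k-1}a-e(\A,\B).
\end{equation*}
Both $e(\A)$ and $e(\A,\bar\B)=\binom{n-k-1}{k-1}a-e(\A,\B)$ are nonnegative, so it is enough to make one of them $\Omega\big(\frac1k\binom{n-k-1}{k-1}aL\big)$; note that $\binom{n-1}{k}=e^{\Theta(k)}$ in the range $n\le 2k+k/6$, so $L=O(k)$ and the target never exceeds $O\big(\binom{n-k-1}{k-1}a\big)$.

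I would first dispose of the range $a\le\frac12\binom{n-k-1}{k-1}$: there $d(S,\B)\le|\B|=a$ for each $S\in\A$, hence $d(S,\bar\B)\ge\binom{n-k-1}{k-1}-a$, and summing over $S\in\A$ gives $e(\A,\bar\B)\ge a\big(\binom{n-k-1}{k-1}-a\big)\ge\frac12\binom{n-k-1}{k-1}a$, which already beats the target. So assume $a>\frac12\binom{n-k-1}{k-1}$, the only genuinely hard regime, and pass to the `hypercube' picture of the footnote: complementing each set of $\A$ inside $[n]\setminus\{x\}$ turns $\A$ into a family $\A^*\subseteq\binom{[n]\setminus\{x\}}{n-1-k}$, deleting $x$ from each set of $\B$ turns $\B$ into $\B^*\subseteq\binom{[n]\setminus\{x\}}{k-1}$, and the $\A$--$\B$ edges of $K(n,k)$ become the containment pairs $T\subseteq U$ with $T\in\B^*$, $U\in\A^*$. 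In this picture $N(\A)\cap\K_x$ corresponds to the $(k-1)$-st lower shadow $\partial\A^*$, obtained by going down $n-2k\ge1$ levels, and $\binom{n-k-1}{k-1}a$ is exactly the number of containment pairs between $\A^*$ and $\partial\A^*$. The Kruskal--Katona theorem, parametrising $a=\binom{t}{n-1-k}$ and using that $n-1-k$ is close to $k-1$ when $n\le 2k+k/6$, gives $|\partial\A^*|\ge\big(1+\Omega(L/k)\big)a$: the nearer $a$ is to $\binom{n-1}{k}$ the smaller this relative excess, and it is of order exactly $L/k$. Hence $\B$, of size only $a$, misses at least $\Omega(L/k)\cdot a$ vertices of $N(\A)\cap\K_x$, all of them in $\bar\B$.

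The crux --- and where I expect the real work --- is that this shadow overflow does not on its own give $e(\A,\bar\B)=\Omega\big(\frac1k\binom{n-k-1}{k-1}aL\big)$, because the bipartite graph between $\A$ and $N(\A)\cap\K_x$ can be far from regular: a few $(k-1)$-sets could lie inside almost every member of $\A^*$ while the rest lie in just one, and then $\B$ could absorb most of the edges. I expect such top-heaviness to be structurally expensive: a $(k-1)$-set contained in many members of $\A^*$ forces a large local `subcube' inside $\A^*$, and many such sets would confine $\A^*$ --- hence $\A$, hence $\F$ --- to within a bounded union of stars $\K_y$ with $y\ne x$, a situation ruled out, or reduced to a weaker target, by the minimality of $x_\F$ together with Observations~\ref{obs::abound} and~\ref{obs::farstar} (in particular $|\F\setminus\K_y|\ge\binom{n-2}{k-1}-a$). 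In the complementary regime, where no such concentration occurs, the bipartite graph is regular up to a constant factor and the overflow of $\Omega(L/k)\cdot a$ shadow vertices lying in $\bar\B$ immediately gives $e(\A,\bar\B)\ge c\cdot\frac Lk\binom{n-k-1}{k-1}a$. Together with the easy regime this proves~\eqref{eq::DK}; the remaining labour is to make the Kruskal--Katona estimate and the top-heavy dichotomy quantitative and to read off the constant $c$.
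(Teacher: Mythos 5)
Your setup and easy regime are fine: the decomposition $e(\F)=e(\A)+e(\A,\bar\B)$, the passage to the hypercube picture, and the observation that $a\le\frac12\binom{n-k-1}{k-1}$ gives $e(\A,\bar\B)\ge\frac12\binom{n-k-1}{k-1}a\gg\frac1k\binom{n-k-1}{k-1}aL$ are all correct. The problem is the quantitative Kruskal--Katona claim $|\partial\A^*|\ge\big(1+\Omega(L/k)\big)a$, which is \emph{false} as stated. Take $n=2k+1$ and $a=\binom{t}{k}$ with $t=2k-1-s$. Then the KK shadow ratio is $\binom{t}{k-1}/\binom{t}{k}=\frac{k}{k-s}\approx 1+\frac sk$, while $L=\log\big(\binom{2k}{k}/\binom{t}{k}\big)\approx(1+s)\log 2$, so the excess is about $\frac sk$ against a target of $\frac{(1+s)\log 2}{k}$. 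As $s\to 0$ the ratio of these tends to $0$. Observation~\ref{obs::abound} only forces $a\le\frac{n-1}{n}\binom{n-2}{k-1}$, so $s$ can be as small as $\Theta(1/k)$; in that regime the KK excess is $\Theta(L/k^2)=o(L/k)$. The extremal families for KK at this level are exactly $\A^*=\binom{[n]\setminus\{x,y\}}{n-1-k}$, i.e.\ $\A_\F\subseteq\K_y$, which the minimality of $x_\F$ forbids (the check is that $\B_\F$ would need to fit inside $\K_x\cap\K_y$, which is too small) --- so the estimate you want can only hold \emph{conditional} on that minimality, not as a statement about arbitrary $k$-uniform families of size $a$, and your proposal never uses $|\F\setminus\K_y|\ge a$ in the shadow step. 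Since the lost factor of $k$ in the shadow excess feeds directly into the final bound (even under your optimistic regularity hypothesis one gets only $\Theta(aL/k)$ instead of $\Theta(aL)$ for $n=2k+1$), this is a real gap, not a matter of optimizing constants.

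On top of that, the second half --- your ``top-heavy dichotomy'' --- is explicitly left as a sketch, and the sketch is not convincing: a single $(k-1)$-set of high degree to $\A^*$ forces many members of $\A$ to lie inside a fixed $(n-k)$-set, which is a constraint, but it does not obviously ``confine $\F$ to a bounded union of stars,'' and I do not see how to turn the heuristic into a lower bound on $e(\A,\bar\B)$. For what it is worth, the source you are reconstructing does not go this way: as the paper's footnote on the Johnson scheme notes, Devlin and Kahn establish this theorem via the Johnson graphs $J(n,k,1)$ and $J(n,k,n-2k)$ rather than via hypercube shadows, so the isoperimetric input is of a different kind and the regularity issue you ran into does not arise in the same form.
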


As we are interested in all $n$ and $k$, we need a lower bound on $e(\F)$ for not just $n \leq 2k+k/6$. We can remove the assumption on $n$ in Theorem~\ref{thm::DK} with a similar bound due to Das and Tran~\cite{DT}. It is implicit in their paper, so for completeness, we derive it from their removal lemma for $\T$.
\begin{theorem}[\cite{DT}]\label{thm::removal}
There is an absolute constant $D > 1$ such that if $n \geq 2k+1$ and $\F \in \T$ with $e(\F) < \beta\binom{n-1}{k-1}\binom{n-k-1}{k-1}$, where $\beta \leq \frac{n-2k}{(20D)^2n}$, then there is an $x \in [n]$ such that
\[ |\F \setminus \K_x| \leq D\beta \frac{n}{n-2k}\binom{n-1}{k-1} .\]
\end{theorem}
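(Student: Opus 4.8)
\emph{Proof idea.} Theorem~\ref{thm::removal} is, after reparametrizing and absorbing constants into $D$, the removal lemma of Das and Tran~\cite{DT}; since we only use it downstream, one option is simply to quote their result, the only point to verify being that the stated range $\beta\le\frac{n-2k}{(20D)^2n}$ lies inside the range their proof covers. To establish it directly I would first recast it as a supersaturation statement: taking the contrapositive and substituting $\beta=\frac{a_\F(n-2k)}{Dn\binom{n-1}{k-1}}$ (and using $a_\F<\binom{n-1}{k-1}$ from Observation~\ref{obs::abound} in one direction), one sees that it is equivalent, up to the value of $D$, to the bound
\[
e(\F)\;\ge\;c\,\frac{n-2k}{n}\,\binom{n-k-1}{k-1}\,a_\F
\]
for some absolute constant $c>0$ and every $\F\in\T$; this is the inequality I would prove.

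Fix $x=x_\F$ attaining the minimum, write $a=a_\F$, and use the decomposition $\F=\A_\F\cup\bar\B_\F$ from Section~\ref{sec::prelim}. Since $\bar\B_\F$ is intersecting, $e(\F)=e(\A_\F)+e(\A_\F,\bar\B_\F)$, and counting edges from $\A_\F$ into the star $\K_x$ (as in the proof of Observation~\ref{obs::AbarB}) gives $e(\A_\F,\bar\B_\F)=a\binom{n-k-1}{k-1}-e(\A_\F,\B_\F)$. So it suffices to show that the ``shielding'' term $e(\A_\F,\B_\F)$, together with any deficit in $e(\A_\F)$, cannot bring $a\binom{n-k-1}{k-1}$ below $c\frac{n-2k}{n}\binom{n-k-1}{k-1}a$.

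When $a$ is small relative to $\binom{n-k-1}{k-1}$, the crude bound $e(\A_\F,\B_\F)\le|\A_\F||\B_\F|=a^2$ already does this. The interesting regime --- the one left open by Theorem~\ref{thm::DK} --- is when $a$ is a constant fraction of $\binom{n-1}{k-1}$, so that $\A_\F$ is a dense subfamily of $\binom{[n]\setminus\{x\}}{k}$. Here I would identify $\B_\F$ with a family in $\binom{[n]\setminus\{x\}}{k-1}$ by deleting $x$; then $e(\A_\F,\B_\F)$ is close to $a\binom{n-k-1}{k-1}$ only if this family contains almost all of the shadow, down to $(k-1)$-element sets, of $\{([n]\setminus\{x\})\setminus A:A\in\A_\F\}$. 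The Kruskal--Katona theorem bounds this shadow from below, and the amount by which it exceeds $|\B_\F|=a$ --- the part that $\B_\F$ must miss, and that therefore contributes to $e(\A_\F,\bar\B_\F)$ --- is controlled quantitatively by how close $n$ is to $2k$. When $\A_\F$ is far from every star this shadow is in fact much larger than $a$ (and for larger $n$, $e(\A_\F)$ is itself large by supersaturation for the Erd\H{o}s--Ko--Rado theorem inside $\binom{[n]\setminus\{x\}}{k}$); when instead $\A_\F$ is concentrated on a star $\K_y$ with $y\neq x$, Observation~\ref{obs::farstar} keeps $\F$ away from $\K_y$ and so forces enough changes to $\K_x$ to produce the required edges. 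Combining these cases and choosing $D$ in terms of $c$ finishes the argument.

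The step I expect to be hardest is the quantitative shadow estimate in the dense regime when $n$ is close to $2k$: there the shadow of $\{([n]\setminus\{x\})\setminus A:A\in\A_\F\}$ exceeds $a$ only by a thin margin, and extracting exactly the factor $\frac{n-2k}{n}$ (rather than a weaker function of $n-2k$), together with tracking how many members of $\A_\F$ each uncovered $(k-1)$-set is disjoint from, requires using Kruskal--Katona in tandem with the structural constraints on $\A_\F$ rather than as a black box. This quantitative control over the $n-2k$ dependence is exactly what Theorem~\ref{thm::DK} does not provide beyond $n>2k+k/6$.
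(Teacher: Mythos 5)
The paper does not prove Theorem~\ref{thm::removal}; it is quoted as a black box from Das and Tran~\cite{DT}, with the paper only deriving Theorem~\ref{thm::DT} \emph{from} it. Your first option---cite~\cite{DT} and check that the stated parameter range fits theirs---is exactly the paper's treatment and is the intended answer.

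Your supplementary sketch of a direct argument is a plan rather than a proof. The reduction to the supersaturation bound $e(\F)\ge c\,\frac{n-2k}{n}\binom{n-k-1}{k-1}a_\F$ (Theorem~\ref{thm::DT}) is correct, but note that relative to this paper it is circular: the paper obtains Theorem~\ref{thm::DT} from Theorem~\ref{thm::removal}, not the other way around, so you would need an independent proof of the supersaturation inequality. The identity $e(\A_\F,\bar\B_\F)=a\binom{n-k-1}{k-1}-e(\A_\F,\B_\F)$ and the crude small-$a$ bound $e(\A_\F,\B_\F)\le a^2$ are fine. In the dense regime with $n$ near $2k$, however, Kruskal--Katona (as in Lemma~\ref{lem::iso}) lower-bounds the \emph{vertex} boundary $|N(\A_\F)\cap\K_x|$, whereas you need a lower bound on the \emph{edge} count $e(\A_\F,\bar\B_\F)$; converting the former into the latter, uniformly in $a$ and with the sharp $\frac{n-2k}{n}$ factor, requires tracking multiplicities---how many $A\in\A_\F$ each uncovered vertex of $N(\A_\F)\cap\K_x$ is disjoint from---and handling the case that $\A_\F$ is itself concentrated near a star. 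You flag this as the hard step, but it is effectively the entire content of the theorem and is not carried out. Quoting~\cite{DT} is the right call here.
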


\begin{theorem}[\cite{DT}]\label{thm::DT}
There exists $c>0$ such that for all $n\geq 2k+1$ and $\F \in \T$,
\begin{equation} \label{eq::DT}
e(\F) \ge c \frac{n-2k}{n} \binom{n-k-1}{k-1} a_\F .
\end{equation}
\end{theorem}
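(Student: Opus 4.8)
The plan is to obtain Theorem~\ref{thm::DT} as a short consequence of the removal lemma, Theorem~\ref{thm::removal}, combined with the trivial bound $a_\F \le \binom{n-1}{k-1}$ from Observation~\ref{obs::abound}. Write $D>1$ for the absolute constant in Theorem~\ref{thm::removal} and put $M := \binom{n-1}{k-1}\binom{n-k-1}{k-1}$. I claim the statement holds with $c = \frac{1}{(20D)^2}$, and I would split into two cases according to whether $e(\F)$ exceeds $\frac{n-2k}{(20D)^2\,n}M$.

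If $e(\F) \ge \frac{n-2k}{(20D)^2\,n}M$, then there is nothing to prove: since $a_\F \le \binom{n-1}{k-1}$,
\[ e(\F) \;\ge\; \frac{1}{(20D)^2}\cdot\frac{n-2k}{n}\binom{n-k-1}{k-1}\binom{n-1}{k-1} \;\ge\; \frac{1}{(20D)^2}\cdot\frac{n-2k}{n}\binom{n-k-1}{k-1}\,a_\F . \]
Otherwise $e(\F) < \frac{n-2k}{(20D)^2\,n}M$, and the idea is to feed $\F$ into Theorem~\ref{thm::removal} with the self-referential choice $\beta = e(\F)/M$ (or, to respect the strict inequality in that theorem, a value marginally above $e(\F)/M$). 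Since $\F\in\T$, $e(\F) < \beta M$, and $\beta \le \frac{n-2k}{(20D)^2\,n}$, the lemma produces an $x\in[n]$ with
\[ |\F\setminus\K_x| \;\le\; D\beta\,\frac{n}{n-2k}\binom{n-1}{k-1} \;=\; \frac{Dn}{n-2k}\cdot\frac{e(\F)}{\binom{n-k-1}{k-1}} . \]
Because $a_\F = \min_{y\in[n]}|\F\setminus\K_y| \le |\F\setminus\K_x|$ by definition, rearranging gives exactly $e(\F) \ge \frac{1}{D}\cdot\frac{n-2k}{n}\binom{n-k-1}{k-1}\,a_\F$, and since $\frac{1}{D}\ge\frac{1}{(20D)^2}$ both cases deliver the claimed bound with $c=\frac{1}{(20D)^2}$.

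The one place needing care — and it is the only obstacle, a minor one — is matching the hypotheses of Theorem~\ref{thm::removal}: it wants $e(\F) < \beta M$ strictly, whereas $\beta = e(\F)/M$ makes this an equality. In the second case one has $e(\F)/M < \frac{n-2k}{(20D)^2 n}$, so there is room to apply the lemma with $\beta_\ep = e(\F)/M + \ep$ for all small $\ep>0$; since $[n]$ is finite, a single $x$ works along a sequence $\ep\to0$, and the displayed bound with $\beta = e(\F)/M$ follows by letting $\ep\to0$ (equivalently, Theorem~\ref{thm::removal} holds verbatim with ``$\le$'' in place of ``$<$''). Beyond this bookkeeping the argument is entirely routine: all the genuine content sits in the removal lemma, which we use as a black box.
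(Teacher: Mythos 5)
Your proof is correct and uses the same key ingredient as the paper: Theorem~\ref{thm::removal} combined with the trivial bound $a_\F \le \binom{n-1}{k-1}$, yielding $c = 1/(20D)^2$. The paper's version is a touch slicker --- it sets $\beta$ proportional to $a_\F$ rather than to $e(\F)$ and applies the contrapositive of the removal lemma (the conclusion fails because $|\F\setminus\K_y| \ge a_\F > D\beta\tfrac{n}{n-2k}\binom{n-1}{k-1}$ for every $y$ by minimality of $a_\F$, forcing $e(\F) \ge \beta\binom{n-1}{k-1}\binom{n-k-1}{k-1}$), which sidesteps both your case split on $e(\F)$ and the $\ep$-limiting bookkeeping needed to handle the strict inequality.
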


\begin{proof}
Let $\F \in \T_x(a)$. Let $D$ be the constant given in Theorem~\ref{thm::removal}, and let
\[ \beta = \frac{a}{(20D)^2 \frac{n}{n-2k} \binom{n-1}{k-1}} \leq \frac{n-2k}{(20 D)^2 n} ,\]
which follows from $a = |\F\setminus \K_x| \leq |\F| = \binom{n-1}{k-1}$. Since $x_\F = x$, we have that for all stars $\K_y$,
\[ |\F \setminus \K_y| \geq |\F \setminus \K_x| = a > \frac{a}{400 D} = D\beta \frac{n}{n-2k} \binom{n-1}{k-1} .\]
Applying the contrapositive of Theorem~\ref{thm::removal} to $\F$ with the above $\beta$, we have
\[ e(\F) \geq \beta \binom{n-1}{k-1} \binom{n-k-1}{k-1} = \frac{1}{(20D)^2} \cdot \frac{n-2k}{n} \binom{n-k-1}{k-1} a ,\]
which proves the theorem with $c=1/(20D)^2$.
\end{proof}

For convenience, we combine Theorems~\ref{thm::DK} and~\ref{thm::DT} so that they work for all $n$ and $k$.
\begin{theorem}\label{thm::eF}
There exists an absolute constant $\theta>0$ such that for all $n \geq 2k+1$ and $\F \in \T$,
\begin{equation}\label{eq::eF:DK}
e(\F) \geq \theta \frac{1}{p_0} a_\F \logaF ,
\end{equation}
and
\begin{equation}\label{eq::eF:DT}
e(\F) \geq \theta \frac{1}{p_0} a_\F \frac{n-2k}{n} \log\binom{n-1}{k} .
\end{equation}
\end{theorem}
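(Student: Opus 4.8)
The plan is to derive both inequalities by feeding the two available lower bounds on $e(\F)$ — Theorem~\ref{thm::DK} (valid for $n \le 2k + k/6$) and Theorem~\ref{thm::DT} (valid for all $n \ge 2k+1$) — into a case analysis on the size of $n$ relative to $k$, and to reconcile the various constants by recalling the definition of $p_0$ in~\eqref{eq::p0}. The key bookkeeping fact is that, up to absolute constants, $\frac{1}{p_0}$ behaves like $\frac{1}{3/4} = \Theta(1)$ when $n = 2k+1$ and like $\binom{n-k-1}{k-1}/\log\!\big(n\binom{n-1}{k}\big)$ when $n > 2k+1$; and that $\log\!\big(n\binom{n-1}{k}\big) = \Theta\!\big(\log\binom{n-1}{k}\big) = \Theta\!\big(k\log\frac{n}{k}\big)$ since $n \le$ (some polynomial in $k$) is not assumed but $\log n = O(k)$ always holds here because $n$ is the only parameter tending to infinity and $k \ge 2$ — more carefully, $\log n \le \log\binom{n-1}{k}$ for $k\ge 1$ and $n\ge 3$, so the $n$ factor is absorbed. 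I would state this as a short preliminary computation: there is an absolute constant $c_0>0$ with $\frac{1}{p_0} \ge c_0 \min\!\big\{1,\ \binom{n-k-1}{k-1}/\log\binom{n-1}{k}\big\}$ and also $\frac{1}{p_0} \le \binom{n-k-1}{k-1}\cdot O(1/\log\binom{n-1}{k})$ when $n>2k+1$.

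For~\eqref{eq::eF:DK}: split into the regime $n \le 2k + k/6$ and the regime $n > 2k + k/6$. In the first regime, Theorem~\ref{thm::DK} gives $e(\F) > c\,\frac{1}{k}\binom{n-k-1}{k-1} a_\F \log\frac{\binom{n-1}{k}}{a_\F}$; since $\frac1k \binom{n-k-1}{k-1} = \Theta\!\big(\binom{n-k-1}{k-1}/\log\binom{n-1}{k}\big)$ here (because $\log\binom{n-1}{k} = \Theta(k)$ when $n-2k = O(k)$), and $\binom{n-k-1}{k-1}/\log\binom{n-1}{k} = \Theta(1/p_0)$, this is exactly the desired form $\theta \frac{1}{p_0} a_\F \log\frac{\binom{n-1}{k}}{a_\F}$. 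In the second regime $n > 2k + k/6$, we have $n - 2k = \Theta(n)$, so Theorem~\ref{thm::DT} gives $e(\F) \ge c\,\frac{n-2k}{n}\binom{n-k-1}{k-1} a_\F = \Omega\!\big(\binom{n-k-1}{k-1} a_\F\big)$, while $\frac{1}{p_0} a_\F \log\frac{\binom{n-1}{k}}{a_\F} \le \frac{1}{p_0} a_\F \log\binom{n-1}{k} = O\!\big(\binom{n-k-1}{k-1} a_\F\big)$ by the upper bound on $1/p_0$; so again the target inequality holds with a suitable $\theta$.

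For~\eqref{eq::eF:DT}: here the right-hand side is $\theta \frac1{p_0} a_\F \frac{n-2k}{n}\log\binom{n-1}{k}$, and one checks $\frac1{p_0}\frac{n-2k}{n}\log\binom{n-1}{k} = O\!\big(\binom{n-k-1}{k-1} \frac{n-2k}{n}\big)$ for $n>2k+1$ (using $\log\!\big(n\binom{n-1}{k}\big) = \Theta(\log\binom{n-1}{k})$), so Theorem~\ref{thm::DT}'s bound $e(\F) \ge c\,\frac{n-2k}{n}\binom{n-k-1}{k-1} a_\F$ suffices directly; for $n = 2k+1$, $\frac{n-2k}{n}\log\binom{n-1}{k} = \Theta(1) = \Theta(1/p_0 \cdot 1)$… wait, more simply, for $n=2k+1$ the factor $\frac{n-2k}{n} = \frac{1}{2k+1}$ makes the RHS $O(\frac1{p_0}a_\F \cdot \frac{\log\binom{2k}{k}}{2k+1}) = O(a_\F)$ since $1/p_0 = \Theta(1)$ and $\log\binom{2k}{k} = \Theta(k)$, which is dominated by $e(\F) \ge e(\A_\F) \ge 0$ plus the trivial bound $e(\F) \ge a_\F$ (each vertex of $\A_\F$ has a neighbor in $\bar\B_\F$ or else $\F$ is not intersecting-closed — actually one should cite Theorem~\ref{thm::DT} with $n=2k+1$ directly, giving $e(\F) \ge \frac{c}{2k+1}\binom{k}{k-1}a_\F = \Omega(a_\F)$). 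Finally I would take $\theta$ to be the minimum of the finitely many implied constants across all cases.

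The main obstacle is purely the constant-chasing: verifying that the normalization $\frac{1}{p_0}\log\binom{n-1}{k}$ (or $\frac1{p_0}$ alone, in~\eqref{eq::eF:DK}) matches $\binom{n-k-1}{k-1}$ up to absolute constants in \emph{every} regime of $n$, including the boundary $n = 2k+1$ where the $1-x\approx e^{-x}$ approximation breaks and $p_0$ is defined by a separate case. I expect no genuine difficulty beyond carefully handling the transition at $n = 2k + k/6$ (where one switches from Theorem~\ref{thm::DK} to Theorem~\ref{thm::DT}) and confirming that $\log\!\big(n\binom{n-1}{k}\big)$ and $\log\binom{n-1}{k}$ are interchangeable up to a constant factor throughout, which holds because $\log n \le \log\binom{n-1}{k}$ whenever $k \ge 1$ and $n \ge 3$.
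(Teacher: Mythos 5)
Your proposal is correct and follows essentially the same route as the paper: convert Theorem~\ref{thm::DK} to the $1/p_0$ normalization for the regime $n \le 2k + k/6$, then switch to Theorem~\ref{thm::DT} (where $n-2k = \Theta(n)$) for the remaining range, and derive \eqref{eq::eF:DT} directly from Theorem~\ref{thm::DT}. The only cosmetic difference is that the paper compresses the $p_0$ bookkeeping into the single clean inequality $p_0 \geq k\log 2/\binom{n-k-1}{k-1}$ (valid for all $n \geq 2k+1$ directly from \eqref{eq::p0}), whereas you argue the comparability of $1/p_0$ with $\binom{n-k-1}{k-1}/\log\binom{n-1}{k}$ case by case; one small slip is your claim that $\log n \leq \log\binom{n-1}{k}$ holds for all $k \geq 1$, which fails at $k=1$, but this is harmless under the paper's standing assumption $k \geq 2$ (and $n$ large). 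Your self-corrected handling of the $n=2k+1$ case of \eqref{eq::eF:DT} --- abandoning the "trivial bound $e(\F) \geq a_\F$" (which is indeed not immediate) in favor of citing Theorem~\ref{thm::DT} directly --- is the right call.
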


\begin{proof}
Note that, from \eqref{eq::p0}, we have
\[ p_0 \geq \frac{k\log\frac{n-1}{k}}{\binom{n-k-1}{k-1}} \geq \frac{k\log 2}{\binom{n-k-1}{k-1}} ,\]
so we can replace \eqref{eq::DK} with $e(\F) \geq c \frac{1}{p_0} a_\F \logaF$, albeit with a different constant $c$. For $2k+k/6 < n$, we have
\[ \frac{1}{p_0} a \loga \leq \binom{n-k-1}{k-1} a \frac{\log\binom{n-1}{k}}{\log\left(n\binom{n-1}{k}\right)} \leq \binom{n-k-1}{k-1} a \cdot 13 \frac{n-2k}{n} ,\]
so by Theorem~\ref{thm::DT}, $e(\F) \geq c \frac{1}{p_0} a_\F \logaF$ holds for all $n$ and $k$, again with a different constant $c$, yielding~\eqref{eq::eF:DK}. Equation~\eqref{eq::eF:DT} follows directly from Theorem~\ref{thm::DT} and~\eqref{eq::p0}.
\end{proof}
From here on, we fix $\theta$ sufficiently small given by Theorem~\ref{thm::eF}. Using this, we can repeat the argument of Devlin and Kahn~\cite{DK}, which is just upper bounding \eqref{eq::unionbound}, to show that we do not need to consider those $\F$ with $e(\F)$ large. Let
\[ \T^1 = \left\{\F \in \T : e(\F) \leq \frac{5}{p_0} a_\F \log\frac{\binom{n-1}{k}}{a_\F} \right\} .\]
\begin{lemma}\label{lem::bigeF}
Assume $k = \omega(1)$ and let $p \geq \frac{49}{50}p_0$. Then no $\F \in \T \setminus \T^1$ is independent in $K_p(n,k)$ w.h.p.
\end{lemma}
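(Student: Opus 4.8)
The plan is to bound the union bound \eqref{eq::unionbound} restricted to $\F \in \T \setminus \T^1$, exploiting the fact that such $\F$ have many edges. Concretely, I would first split the sum over $\F \in \T \setminus \T^1$ according to the parameters $x = x_\F$ and $a = a_\F$; by symmetry over the choice of $x$ there is just a factor of $n$, and by Observation~\ref{obs::abound} we need only consider $1 \le a \le \binom{n-1}{k-1}$. For a fixed $x$ and $a$, the number of candidate families is at most $\binom{\binom{n-1}{k}}{a}^2$ by the trivial bound \eqref{eq::trivialF}, which is $\exp\left(\Theta\left(a\log\frac{\binom{n-1}{k}}{a}\right)\right)$. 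Meanwhile every $\F$ in this range has $e(\F) > \frac{5}{p_0} a \log\frac{\binom{n-1}{k}}{a}$ by definition of $\T^1$, so each term contributes at most $(1-p)^{e(\F)} \le \exp\left(-p\cdot \frac{5}{p_0} a\log\frac{\binom{n-1}{k}}{a}\right) \le \exp\left(-\frac{49}{50}\cdot 5\, a\log\frac{\binom{n-1}{k}}{a}\right)$, using $p \ge \frac{49}{50}p_0$ and $1-p \le e^{-p}$.

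The key point is the arithmetic: the exponent from counting is $O\left(a\log\frac{\binom{n-1}{k}}{a}\right)$ while the exponent of decay is $-\frac{49}{10} a\log\frac{\binom{n-1}{k}}{a}$, which is $-4.9\, a\log\frac{\binom{n-1}{k}}{a}$, comfortably beating any fixed constant hidden in the $\Theta$ coming from \eqref{eq::trivialF}. So the contribution from each $(x,a)$ is at most $\exp\left(-c' a\log\frac{\binom{n-1}{k}}{a}\right)$ for some $c' > 0$; here one should be slightly careful that $\log\frac{\binom{n-1}{k}}{a}$ can degenerate when $a$ is close to $\binom{n-1}{k}$, but since $a \le \binom{n-1}{k-1} = \frac{k}{n-k}\binom{n-1}{k} \le \binom{n-1}{k}/2$ (as $n \ge 2k+1$... wait, one needs $k \le n-k$, i.e. $n \ge 2k$, which holds), we have $\log\frac{\binom{n-1}{k}}{a} \ge \log 2$, so $a\log\frac{\binom{n-1}{k}}{a} \ge a\log 2 \ge \log 2$ for $a \ge 1$, giving each term at most $\exp(-c'\log 2) < 1$ and in fact the bound is summable.

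Then I would sum over $a$: $\sum_{a \ge 1} \exp\left(-c' a\log\frac{\binom{n-1}{k}}{a}\right)$ is dominated by its first term $\exp\left(-c'\log\binom{n-1}{k}\right) = \binom{n-1}{k}^{-c'}$, since $a\log\frac{\binom{n-1}{k}}{a}$ is increasing in $a$ on the relevant range up to near $\binom{n-1}{k}/e$ and the tail is negligible; even crudely, the whole sum is at most $\binom{n-1}{k}\cdot \exp(-c'\log 2) < \infty$ term-by-term won't do, so instead I bound $\sum_{a\ge1}\exp(-c'a\log 2) = O(1)$ times a decaying factor — more carefully, writing $a\log\frac{\binom{n-1}{k}}{a} \ge \frac{a}{2}\log\binom{n-1}{k}$ for $a \le \sqrt{\binom{n-1}{k}}$ and $a\log\frac{\binom{n-1}{k}}{a} \ge \frac{1}{2}\binom{n-1}{k}^{1/2}\log 2$ otherwise, the sum over all $a$ is $O\left(\binom{n-1}{k}^{-c'/2}\right)$. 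Multiplying by the factor $n$ from the choice of $x$, and noting $n = \exp(O(\log n)) = \binom{n-1}{k}^{o(1)}$ since $k \ge 2$ forces $\binom{n-1}{k} \ge \binom{n-1}{2} \gg n$, the total is $n\cdot\binom{n-1}{k}^{-c'/2} = o(1)$. The main obstacle — really the only subtlety — is handling the degenerate regime where $a$ is a constant fraction of $\binom{n-1}{k}$, so that $\log\frac{\binom{n-1}{k}}{a}$ is a small constant; there one must check that the counting exponent's constant times that small constant is still beaten by $4.9$ times the same small constant, which it is, precisely because the ratio of the decay constant to the counting constant does not depend on $a$. The hypothesis $k = \omega(1)$ is used to ensure the $\exp(o(\cdot))$-type slack in \eqref{eq::trivialF} and in $n = \binom{n-1}{k}^{o(1)}$, and to invoke Theorem~\ref{thm::eF} (via the definition of $\T^1$) with its absolute constant; I would double-check that $\frac{5}{p_0}$ rather than $\theta/p_0$ in the definition of $\T^1$ leaves enough room, which it does since $5 \cdot \frac{49}{50} = 4.9$ dominates the $O(1)$ from \eqref{eq::trivialF}.
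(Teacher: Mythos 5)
Your overall approach is the same as the paper's (union bound over $\T_x(a)\setminus\T^1_x(a)$, bounding $|\T_x(a)|$ trivially via \eqref{eq::trivialF} and using the defining lower bound on $e(\F)$ from $\T^1$). However, there is a genuine gap at the very heart of the argument. You claim that $4.9$ ``comfortably beats any fixed constant hidden in the $\Theta$'' coming from \eqref{eq::trivialF}, and later that ``$5\cdot\tfrac{49}{50}=4.9$ dominates the $O(1)$.'' Neither claim is logically meaningful --- a fixed number cannot dominate an arbitrary implicit constant --- and, crucially, the actual constant is not far below $4.9$. One has
\[
\binom{\binom{n-1}{k}}{a}^2 \le \exp\left(2a\log\frac{e\binom{n-1}{k}}{a}\right) \le \exp\left(\left(2 + \frac{2}{\log 2}\right) a\log\frac{\binom{n-1}{k}}{a}\right),
\]
where the last step uses $\log\frac{\binom{n-1}{k}}{a} \ge \log 2$ (from Observation~\ref{obs::abound}). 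Since $2 + 2/\log 2 \approx 4.885$, the margin over the decay exponent $4.9$ is only about $0.015$, which is why $\T^1$ is defined with the constant $5$ and the lemma assumes $p \ge \tfrac{49}{50}p_0$. Your proof must pin down this constant explicitly --- it is precisely the point of the computation, and without it you have not shown the exponent is negative. A secondary issue: your aside that ``$k\ge 2$ forces $\binom{n-1}{k} \ge \binom{n-1}{2} \gg n$'' does not yield $n = \binom{n-1}{k}^{o(1)}$ (for $k=2$, $n$ is roughly $\binom{n-1}{2}^{1/2}$, which is not $\binom{n-1}{2}^{o(1)}$); the correct justification, which you do eventually invoke, is that $k=\omega(1)$ gives $\log n = o(\log\binom{n-1}{k})$. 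The summation over $a$ should be organized so that the bound beats the factor of $n$ from the union over $x$, as in the paper's split at $a=n$.
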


\begin{proof}
Let $x \in [n]$. By Observation~\ref{obs::abound}, for every $\F \in \T$, we have $\binom{n-1}{k}/a_\F \geq \frac{n}{k} > 2$. Hence by~\eqref{eq::trivialF}, we have
\[ |\T_x(a)| \leq \binom{\binom{n-1}{k}}{a}^2 \leq \exp\left( 2a \log\frac{e\binom{n-1}{k}}{a} \right) \leq \exp\left( \left( 2 + \frac{2}{\log 2} \right) a \loga \right) .\]
Using $(1-p)^{e(\F)} \leq \exp(-pe(\F))$ and the union bound, the probability that some $\F \in \T_x(a) \setminus \T^1_x(a) $ is independent is at most
\[ \exp\left( \left( 2 + \frac{2}{\log 2} \right)a \loga - \frac{49}{10} a \loga \right) \leq \exp\left( - \frac{1}{70} a \log\frac{\binom{n-1}{k}}{a} \right) .\]
Taking the union bound over all $a$, the probability that $\F \in \T \setminus \T^1$ with $x_\F = x$ is independent is at most
\[ \sum_{a=1}^{n} \exp\left( -\frac{1}{70} a \log\frac{\binom{n-1}{k}}{a} \right) + \sum_{a=n+1}^{\binom{n-1}{k}} \exp\left( -\frac{1}{70} a \log\frac{\binom{n-1}{k}}{a} \right) \]
\[ \leq n \exp\left( -\frac{1}{70} \log\binom{n-1}{k} \right) + \binom{n-1}{k}\exp\left( - \frac{1}{70} n \log\frac{\binom{n-1}{k}}{n} \right) \]
\[ \leq \exp(-\omega(\log n)) + \exp(-\omega(n)) = o(1/n) ,\]
since $\log n = o(\log\binom{n-1}{k})$ for $k = \omega(1)$. A union bound over all $x$ completes the proof.
\end{proof}
In the previous proof, we took a union bound over certain $\F$ with $a_\F = a$ and $x_\F = x$, and then we took a union bound over all $a$ and $x$. Our proofs often take this form, and since the latter union bound can often be executed as above, we omit these details going forward.

Note that if $\F \in \T^1$, we get the following upper bound on $a_\F$, which will be particularly useful when $n-2k$ is large.

\begin{lemma}\label{lem::logafbound}
For every $\F \in \T^1$, we have
\[ \frac{\theta}{5} \cdot \frac{n-2k}{n} \log\binom{n-1}{k} \leq \logaF .\]
\end{lemma}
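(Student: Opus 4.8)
The plan is to chain the defining inequality of $\T^1$ together with the lower bound \eqref{eq::eF:DT} on $e(\F)$ from Theorem~\ref{thm::eF}; no new idea is needed. Fix $\F \in \T^1$. Since $\F \in \T$, it is by definition not a star, so $a_\F \geq 1$; in particular $a_\F > 0$, which makes division by $a_\F$ legitimate. By the definition of $\T^1$ we have $e(\F) \leq \frac{5}{p_0} a_\F \logaF$, while \eqref{eq::eF:DT} gives $e(\F) \geq \theta \frac{1}{p_0} a_\F \frac{n-2k}{n} \log\binom{n-1}{k}$. Comparing these two bounds and cancelling the common positive factor $\frac{1}{p_0} a_\F$ yields $\theta \frac{n-2k}{n} \log\binom{n-1}{k} \leq 5 \logaF$, which is the claimed inequality after dividing by $5$.

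There is essentially no obstacle here: the lemma is pure bookkeeping, extracted from two inequalities already in hand because the equivalent statement $a_\F \leq \binom{n-1}{k} \exp\!\left(-\frac{\theta}{5}\cdot\frac{n-2k}{n}\log\binom{n-1}{k}\right)$ will be more convenient to cite later, especially when $n-2k$ is a constant fraction of $n$ and this forces $a_\F$ to be much smaller than the trivial bound of Observation~\ref{obs::abound}. The only point meriting a sentence of care is precisely that $a_\F > 0$, so that the cancellation is valid; this is where membership in $\T$ (rather than merely being an arbitrary $\binom{n-1}{k-1}$-subset of $V(K(n,k))$) is used.
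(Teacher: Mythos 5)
Your proof is correct and is exactly the paper's argument: chain \eqref{eq::eF:DT} with the defining inequality of $\T^1$ and cancel the positive factor $\frac{1}{p_0}a_\F$. The added remark that $a_\F \geq 1$ for $\F \in \T$ is a fine bit of care, though the paper leaves it implicit.
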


\begin{proof}
For $\F \in \T^1$, by~\eqref{eq::eF:DT}, we have
\[ \theta \frac{1}{p_0} a_\F \frac{n-2k}{n} \log\binom{n-1}{k} \leq e(\F) \leq \frac{5}{p_0} a_\F \logaF .\qedhere\]
\end{proof}

\subsection{Neighborhood assumption} \label{sec::prelim:neigh}

Let $\T^2 = \left\{\F \in \T^1 : \B_\F \subseteq N(\A_\F) \right\}$. Our first reduction is that for most $\F \in \T^1$, there exists $\F' \in \T^2$ such that $E(\F') \subseteq E(\F)$. This is helpful because if we know that $\F'$ is not independent, then we can automatically conclude that $\F$ is not independent. To make this reduction, we need a lemma concerning the vertex boundary of $\F \in \T$ in $K(n,k)$, which will also be useful for the next reduction. We derive this from the Kruskal-Katona theorem --- in fact, we only need a weaker version due to Lov\'asz. For real $z$, we define $\binom{z}{k} = \frac{z(z-1) \cdots (z-k+1)}{k!}$.
\begin{theorem}[Kruskal~\cite{Kruskal}, Katona~\cite{Katona}, Lov\'asz (Problem 13.31(b) of~\cite{Lovasz})]\label{thm::KK}
Let $\F$ be a nonempty family of $k$-sets, and let $z \geq k$ be a real number such that $|\F| = \binom{z}{k}$. Then for every $\ell\leq k$,
\[ \left|\left\{ S : |S|=\ell, \exists F \in \F \text{ such that } S \subseteq F \right\}\right| \geq \binom{z}{\ell} .\]
\end{theorem}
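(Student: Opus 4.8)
The plan is to peel the statement down to the single-step shadow inequality, which is the real content of Kruskal--Katona, and then establish that by compression. For a family $\mathcal{G}$ of $j$-sets write $\partial\mathcal{G}=\{S:|S|=j-1,\ S\subseteq G\text{ for some }G\in\mathcal{G}\}$ for its shadow. Since every $\ell$-subset of a $k$-set is obtained by deleting elements one at a time, the family in the statement is exactly the iterated shadow $\partial^{\,k-\ell}\F$; the case $\ell=0$ is trivial ($\partial^{\,k}\F=\{\emptyset\}$ and $\binom{z}{0}=1$), so assume $\ell\ge 1$. It therefore suffices to prove the single-step bound
\[ |\F|=\binom{z}{k},\ z\ge k\ \Longrightarrow\ |\partial\F|\ge\binom{z}{k-1}, \]
and then iterate: $\partial\F$ is a nonempty family of $(k-1)$-sets, so there is a real $z'\ge k-1$ with $|\partial\F|=\binom{z'}{k-1}$; the displayed bound gives $\binom{z'}{k-1}\ge\binom{z}{k-1}$, and since $x\mapsto\binom{x}{k-1}$ is increasing on $[k-1,\infty)$ and $z\ge k-1$, this forces $z'\ge z$. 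Applying the single-step bound successively to $\partial\F,\partial^2\F,\dots$ down to level $\ell$ then yields $|\partial^{\,k-\ell}\F|\ge\binom{z}{\ell}$.

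For the single-step bound I would run the standard compression argument. For $i<j$ define the $ij$-compression on $k$-sets by $S_{ij}(F)=(F\setminus\{j\})\cup\{i\}$ when $j\in F$, $i\notin F$ and $(F\setminus\{j\})\cup\{i\}\notin\F$, and $S_{ij}(F)=F$ otherwise, and set $S_{ij}(\F)=\{S_{ij}(F):F\in\F\}$. One checks $|S_{ij}(\F)|=|\F|$ and $|\partial S_{ij}(\F)|\le|\partial\F|$: a $(k-1)$-set $T$ can only enter the shadow after compression if $i\in T$, $j\notin T$, and then $(T\setminus\{i\})\cup\{j\}$ must already lie in $\partial\F$, with distinct $T$ giving distinct such sets. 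Iterating these compressions terminates at a left-compressed family without increasing the shadow or changing the size. For a left-compressed family one shows, by induction on the ground set, that its shadow is at least as large as that of the colexicographic initial segment of the same cardinality; and for the colex initial segment of size $\binom{z}{k}$, with $k$-cascade representation $\binom{z}{k}=\binom{a_k}{k}+\binom{a_{k-1}}{k-1}+\cdots$, the shadow has size $\binom{a_k}{k-1}+\binom{a_{k-1}}{k-2}+\cdots$, which a real-variable estimate shows is at least $\binom{z}{k-1}$.

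The combinatorial steps --- compression not growing the shadow, left-compressed families being colex-extremal --- are classical but notation-heavy, and the induction needs a careful set-up. The part I expect to take the most care is the final comparison of the cascade sum $\binom{a_k}{k-1}+\binom{a_{k-1}}{k-2}+\cdots$ with $\binom{z}{k-1}$ when $z\notin\mathbb{Z}$: this is exactly where Lov\'asz's real-variable form goes beyond the integral Kruskal--Katona theorem, and it reduces to a convexity estimate for the generalized binomial coefficient $x\mapsto\binom{x}{j}$. An alternative that avoids cascades is to prove the single-step bound directly by induction on the ground set, splitting $\F$ by a fixed element into a ``deletion'' in $\binom{[n-1]}{k}$ and a ``link'' in $\binom{[n-1]}{k-1}$ and combining the two inductive bounds, at the cost of a more delicate optimization over the split.
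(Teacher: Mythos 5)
The paper does not prove Theorem~\ref{thm::KK}; it is invoked as a classical result, with references to Kruskal, Katona, and Problem 13.31(b) of Lov\'asz's book, so there is no argument in the paper to compare your sketch against. Judged on its own terms, your plan is sound. The reduction to the one-step shadow bound and the iteration are correct: writing $|\partial\F|=\binom{z'}{k-1}$ with $z'\ge k-1$, the one-step bound together with the strict monotonicity of $x\mapsto\binom{x}{k-1}$ on $[k-1,\infty)$ gives $z'\ge z$, the hypothesis needed to reapply the bound at the next level is preserved, and the $\ell=0$ base case is immediate, so the induction on $k-\ell$ closes.

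For the one-step bound, the compression-to-colex-to-cascade route you lay out is the standard proof of the integral Kruskal--Katona theorem, and you correctly flag the one genuinely non-routine step: comparing the cascade shadow $\binom{a_k}{k-1}+\binom{a_{k-1}}{k-2}+\cdots$ with $\binom{z}{k-1}$ when $z$ is not an integer. That estimate does hold, but it is where the bookkeeping is heaviest, and it somewhat defeats the purpose of the real-variable formulation, which exists precisely to avoid cascades. Your proposed alternative --- compress to a left-compressed family, then prove the one-step bound directly by induction on the ground set, splitting off a fixed element into the ``deletion'' $\F_0$ and the ``link'' $\F_1$, using $\F_1\subseteq\partial\F_0$ to get $|\partial\F|=|\partial\F_0|+|\partial\F_1|$, and optimizing the resulting two-term lower bound --- is essentially the argument in Lov\'asz's book, and the normalization $|\F|=\binom{z}{k}$ is chosen so that this optimization is clean. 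Either route is a valid plan for the cited statement; the second is closer to the source and avoids the cascade comparison entirely.
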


\begin{lemma}\label{lem::iso}
Let $x \in [n]$. For every nonempty $\A \subseteq \binom{[n]\setminus\{x\}}{k}$ with $|\A| < \binom{n-2}{k-1}$, we have $|N(\A) \cap \K_x| > |\A|$. Moreover, for every $\F \in \T_x(a)$, we have $|N(\A_\F) \cap \K_x| > a$.
\end{lemma}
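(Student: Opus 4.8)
The plan is to reduce the statement to a shadow estimate and apply the Kruskal--Katona theorem in Lov\'asz's form (Theorem~\ref{thm::KK}). First I would reinterpret $|N(\A)\cap\K_x|$ combinatorially. Via the bijection $S\mapsto\{x\}\cup S$ between $\binom{[n]\setminus\{x\}}{k-1}$ and $\K_x$, a member $\{x\}\cup S$ of $\K_x$ lies in $N(\A)$ precisely when $S$ is disjoint from some $A\in\A$, that is, $S\subseteq([n]\setminus\{x\})\setminus A$. Setting $\bar\A=\{([n]\setminus\{x\})\setminus A:A\in\A\}$ (a nonempty family of $(n-1-k)$-sets with ground set $[n]\setminus\{x\}$), we get that $|N(\A)\cap\K_x|$ equals the number of $(k-1)$-sets contained in some member of $\bar\A$. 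Since $n\ge 2k+1$ we have $k-1\le n-1-k$, so this is a genuine (lower) shadow.

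Next I would apply Theorem~\ref{thm::KK} to $\bar\A$ with parameters ``$k$''$=n-1-k$ and ``$\ell$''$=k-1$: picking the unique real $z\ge n-1-k$ with $|\A|=|\bar\A|=\binom{z}{n-1-k}$ (possible because $w\mapsto\binom{w}{n-1-k}=\prod_{i=0}^{n-k-2}\frac{w-i}{n-1-k-i}$ is continuous and strictly increasing on $[n-1-k,\infty)$), we obtain
\[ |N(\A)\cap\K_x| \ge \binom{z}{k-1}. \]
It therefore suffices to prove $\binom{z}{k-1}>\binom{z}{n-1-k}=|\A|$. This is where the hypothesis $|\A|<\binom{n-2}{k-1}$ is used: since $\binom{n-2}{k-1}=\binom{n-2}{n-1-k}$ and $\binom{z}{n-1-k}$ is strictly increasing in $z$, we deduce $z<n-2$.

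The crux is the inequality $\binom{z}{k-1}>\binom{z}{n-1-k}$ for every real $z$ with $n-1-k\le z<n-2$. Iterating $\binom{w}{j}/\binom{w}{j-1}=(w-j+1)/j$ gives
\[ \frac{\binom{z}{k-1}}{\binom{z}{n-1-k}} = \prod_{i=k-1}^{n-k-2}\frac{i+1}{z-i}, \]
a product over the nonempty index set $\{k-1,\dots,n-k-2\}$ (nonempty as $n-2k\ge1$), which is symmetric about $(n-3)/2$. Pairing the index $i$ with $n-3-i$ and writing $p=i+1$, $q=n-2-i$ (so $p+q=n-1$) and $Z=z+1<n-1$, each paired product equals $\frac{pq}{(Z-p)(Z-q)}$, and $pq-(Z-p)(Z-q)=Z(n-2-z)>0$; when $n$ is odd the unpaired middle term $\frac{(n-1)/2}{Z-(n-1)/2}$ exceeds $1$ for the same reason (using $0<Z-(n-1)/2<(n-1)/2$, which holds since $n-1-k\le z<n-2$). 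Hence the product exceeds $1$, proving the first assertion. For the ``moreover'': given $\F\in\T_x(a)$ we have $a\ge1$ (otherwise $\F=\K_x$ is a star, contradicting $\F\in\T$), and by Observation~\ref{obs::abound}, $a\le\frac{n-k}{n}\binom{n-1}{k-1}<\frac{n-k}{n-1}\binom{n-1}{k-1}=\binom{n-2}{k-1}$, so the first part applies to $\A_\F$ and yields $|N(\A_\F)\cap\K_x|>a$.

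The main obstacle is the last comparison of generalized binomial coefficients: for non-integer $z$ the symmetry $\binom{z}{j}=\binom{z}{z-j}$ fails, so one cannot just ``reflect across the peak'', and it is precisely the strict bound $|\A|<\binom{n-2}{k-1}$ (not a weaker one) that forces $z<n-2$, making the pairing argument work; at $|\A|=\binom{n-2}{k-1}$ one would have $z=n-2$ and only $\binom{z}{k-1}=|\A|$, which explains the exact form of the hypothesis.
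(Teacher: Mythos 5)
Your proof is correct and follows the same route as the paper's: the complementation bijection reduces $|N(\A)\cap\K_x|$ to a lower shadow, to which the Lov\'asz form of Kruskal--Katona is applied, and the result reduces to $\binom{z}{k-1}>\binom{z}{n-1-k}$ for $n-1-k\leq z<n-2$, with the ``moreover'' handled via Observation~\ref{obs::abound}. The only difference is that you supply a careful pairing argument for the generalized binomial inequality, which the paper justifies in a single terse sentence.
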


\begin{proof}
Consider $\A^\star = \{ ([n]\setminus\{x\}) \setminus A : A \in \A\} \subseteq \binom{[n]\setminus\{x\}}{n-k-1}$. Then $|\A^\star| = |\A|$ and
\begin{equation}\label{eq::shadow}
\left|\left\{S \in \binom{[n]\setminus\{x\}}{k-1} :  \exists A\in \A^\star \textrm{ such that } S \subseteq A\right\}\right| = \left|N(\A) \cap \K_x\right| .
\end{equation}
Since $|\A| < \binom{n-2}{n-k-1}$, there exists a real number $z < n-2$ such that $|\A| = \binom{z}{n-k-1}$ and $n-k-1 \leq z$. Applying Theorem~\ref{thm::KK} to $\A^\star$ and using \eqref{eq::shadow}, we have
\begin{equation}\label{eq::expansion}
|N(\A) \cap \K_x| \geq \binom{z}{k-1} > \binom{z}{n-k-1} = |\A| ,
\end{equation}
where the second inequality follows because $\binom{n-2}{k-1} = \binom{n-2}{n-k-1}$, $n-k-1>k-1$, and $z<n-2$.

To prove the second part of the lemma, note that by Observation~\ref{obs::abound}, for any $\F \in \T$,
\[ |\A_\F| \leq \frac{n-k}{n} \binom{n-1}{k-1} = \frac{n-1}{n} \binom{n-2}{k-1} ,\]
and we are done by applying the first part of the lemma.
\end{proof}

Now we can give the reduction from $\T^1$ to $\T^2$, which we employ in Sections~\ref{sec::hit:proof} and~\ref{sec::largen:proof}.

\begin{prop}\label{prop::T2reduc}
For every $\F \in \T^1_x(a)$ with $a < \frac{1}{3} \binom{n-2}{k-1}$, there exists $\F' \in \T^2_x(a)$ satisfying $\A_{\F'} = \A_\F$ and $E(\F') \subseteq E(\F)$.
\end{prop}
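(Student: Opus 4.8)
The plan is to leave $\A_\F$ untouched and only change which vertices of the star $\K_x$ are deleted, replacing $\B_\F$ by a set of deleted vertices that lies entirely in $N(\A_\F)$. Write $x=x_\F$, $a=a_\F$, $\A=\A_\F$, and set $M:=N(\A)\cap\K_x$. I would choose $\B_{\F'}$ to be any family with $M\cap\B_\F\subseteq\B_{\F'}\subseteq M$ and $|\B_{\F'}|=a$, and then let $\F':=\A\cup(\K_x\setminus\B_{\F'})$. Such a $\B_{\F'}$ exists: since $\F\in\T$ we have $\A\neq\emptyset$, so the ``moreover'' part of Lemma~\ref{lem::iso} gives $|M|>a$, while $|M\cap\B_\F|\le|\B_\F|=a<|M|$, so one can enlarge $M\cap\B_\F$ to a size-$a$ subset of $M$.

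Next I would check $E(\F')\subseteq E(\F)$. Since $\bar\B_{\F'},\bar\B_\F\subseteq\K_x$ are intersecting, $E(\F')=E(\A)\cup E(\A,\bar\B_{\F'})$ and $E(\F)=E(\A)\cup E(\A,\bar\B_\F)$, so it suffices to show $E(\A,\bar\B_{\F'})\subseteq E(\A,\bar\B_\F)$. Any edge of $E(\A,\bar\B_{\F'})$ has its $\K_x$-endpoint $v$ in $M\setminus\B_{\F'}$, and the choice $M\cap\B_\F\subseteq\B_{\F'}$ forces $M\setminus\B_{\F'}\subseteq M\setminus\B_\F\subseteq\bar\B_\F$, so $v\in\bar\B_\F$ and the edge lies in $E(\A,\bar\B_\F)$.

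Finally I would verify $\F'\in\T^2_x(a)$ with $\A_{\F'}=\A$. We have $|\F'|=\binom{n-1}{k-1}$, and since the sets of $\K_x\setminus\B_{\F'}$ contain $x$ while those of $\A$ do not, $\F'\setminus\K_x=\A$, so $|\F'\setminus\K_x|=a\ge1$. For $y\neq x$, $|\F'\setminus\K_y|\ge|(\K_x\setminus\B_{\F'})\setminus\K_y|\ge\binom{n-2}{k-1}-a>a$, the last inequality being exactly where the hypothesis $a<\tfrac13\binom{n-2}{k-1}$ is used (only $a<\tfrac12\binom{n-2}{k-1}$ is actually needed). Hence $|\F'\setminus\K_y|>|\F'\setminus\K_x|$ for all $y\neq x$, so $x$ is the unique minimiser: $x_{\F'}=x$, $a_{\F'}=a$, $\A_{\F'}=\A$, and $\F'$ is not a star, i.e.\ $\F'\in\T_x(a)$. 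Since $E(\F')\subseteq E(\F)$, $e(\F')\le e(\F)\le\frac5{p_0}a\log\frac{\binom{n-1}{k}}{a}$, so $\F'\in\T^1$; and $\B_{\F'}\subseteq M\subseteq N(\A)=N(\A_{\F'})$, so $\F'\in\T^2$. There is no real obstacle here; the only delicate point is ensuring the surgery does not make $\F'$ closer to some other star $\K_y$ than to $\K_x$, which is handled by the quantitative hypothesis on $a$, and the sole nontrivial external input is the shadow bound behind Lemma~\ref{lem::iso} guaranteeing that $N(\A)\cap\K_x$ is large enough to absorb the size-$a$ deletion set.
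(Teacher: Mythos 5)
Your proof is correct and follows essentially the same route as the paper: you invoke Lemma~\ref{lem::iso} to produce a size-$a$ set $\B_{\F'}$ with $N(\A_\F)\cap\B_\F\subseteq\B_{\F'}\subseteq N(\A_\F)\cap\K_x$, set $\F'=\A_\F\cup(\K_x\setminus\B_{\F'})$, check $E(\F')\subseteq E(\F)$, and verify $x_{\F'}=x$. The only (minor) divergence is in the last verification: the paper uses Observation~\ref{obs::farstar} and the inequality $|\F'\setminus\K_y|\ge|\F\setminus\K_y|-|\B_{\F'}|\ge\binom{n-2}{k-1}-2a$, whereas you bound $|(\K_x\setminus\B_{\F'})\setminus\K_y|\ge\binom{n-2}{k-1}-a$ directly, which is slightly sharper and would in fact suffice under the weaker hypothesis $a<\tfrac12\binom{n-2}{k-1}$.
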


\begin{proof}
Let $\F \in \T_x(a)$. Since $a \leq |N(\A_\F) \cap \K_x|$ by Lemma~\ref{lem::iso}, there exists $\B'$ of size $a$ such that $N(\A_\F) \cap \B_\F \subseteq \B' \subseteq N(\A_\F) \cap \K_x$. Let $\F' = \A_\F \cup (\K_x \setminus \B')$, so
\[ E(\F') = E(\A_\F) \cup E(\A_\F, \bar{\B'}) \subseteq E(\A_\F) \cup E(\A_\F, \bar{N(\A_\F) \cap \B_\F}) = E(\F) .\]
By Observation~\ref{obs::farstar}, for any $y \in [n]$ with $y \neq x$,
\[ |\F' \setminus \K_y| \geq |\F \setminus \K_y| - |\B'| \geq \binom{n-2}{k-1} - 2a > a = |\F' \setminus \K_x| .\]
Thus $x_{\F'} = x$ and so $\F' \in \T^2_x(a)$ and $\A_{\F'} = \A_\F$.
\end{proof}

\section{Main lemma}\label{sec:main lemma}

One might hope that, with the extra condition $\T^1$ imposes over $\T$, we have much stronger upper bounds on $|\T^1_x(a)|$ than~\eqref{eq::trivialF}. This is the content of the following technical lemma, which is one of our main innovations.

Note that $\binom{n-k-1}{k-1}$ is the degree of vertices in $\binom{[n]\setminus\{x\}}{k}$ to $\K_x$, and note that $\binom{n-k}{k} = \frac{n-k}{k} \binom{n-k-1}{k-1}$ is the degree of vertices in $\K_x$ to $\binom{[n]\setminus\{x\}}{k}$. For each $\F$ and $\delta = \delta(n,k)$, define $\A_\F^{\geq\delta} = \{A \in \A_\F : d(A,\B_\F) \geq \delta \binom{n-k-1}{k-1}\}$, where $d(A,\B_\F) = |N(A) \cap \B_\F|$. Let $\A_\F^{<\delta} = \A_\F \setminus \A_\F^{\geq\delta}$. We use the structure between $\A_\F$ and $\B_\F$ for $\F \in \T^1$ to give upper bounds for the number of $\A_\F^{\geq\delta}$ across all $\F \in \T^1_x(a)$.

\begin{lemma}\label{lem::Ahigh}
If $\delta = \omega\left( \frac{n\log\frac{n}{n-2k}}{k\log\binom{n-1}{k}} \right)$ and $\delta \leq 1/2$, then the number of choices for $\A_\F^{\geq\delta}$ across all $\F \in \T^1_x(a)$ is at most $\eoaloga$.
\end{lemma}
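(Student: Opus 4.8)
The plan is to encode each set $\A_\F^{\geq\delta}$ economically. The key observation is that, for $\F\in\T^1_x(a)$, the lower bound~\eqref{eq::eF:DK} forces $e(\F)\le\frac{5}{p_0}a\loga$, and since $e(\F)\ge e(\A_\F,\B_\F)-\binom{n-k-1}{k-1}a \ge \sum_{A\in\A_\F^{\geq\delta}}d(A,\B_\F)-\binom{n-k-1}{k-1}a$ up to the book-keeping in Observation~\ref{obs::AbarB}, we get
\[
\delta\binom{n-k-1}{k-1}\,\bigl|\A_\F^{\geq\delta}\bigr| \;\le\; e(\A_\F,\B_\F) \;\le\; e(\F)+\binom{n-k-1}{k-1}a \;\le\; \frac{5}{p_0}a\loga+\binom{n-k-1}{k-1}a.
\]
Using $p_0\ge k\log\frac{n-1}{k}/\binom{n-k-1}{k-1}$ (and that $\delta$ dominates $\frac{n\log\frac{n}{n-2k}}{k\log\binom{n-1}{k}}$, which controls the $\binom{n-k-1}{k-1}a$ term relative to the main one) this yields $\bigl|\A_\F^{\geq\delta}\bigr| \le \frac{C}{\delta k}\cdot a\cdot\frac{\loga}{\log\frac{n-1}{k}}$ for an absolute constant $C$, and the hypothesis on $\delta$ makes this $m:=\bigl|\A_\F^{\geq\delta}\bigr|$ genuinely smaller than $a$ by a super-constant factor (roughly $m = o\bigl(a\cdot\frac{\log\frac{n}{n-2k}}{\delta\, k\log\binom{n-1}{k}}\cdot\frac{\loga}{\log\frac{n}{n-2k}}\bigr)$; the point is $m\loga = o(a\loga)$).

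Next I would bound the number of choices for $\A_\F^{\geq\delta}$ simply by the number of subsets of $\binom{[n]\setminus\{x\}}{k}$ of size exactly $m$, namely $\binom{\binom{n-1}{k}}{m} \le \exp\bigl(m\log\frac{e\binom{n-1}{k}}{m}\bigr)$. Since $m\le a$ and $x\mapsto x\log\frac{e\binom{n-1}{k}}{x}$ is increasing on the relevant range, and since $m/a\to 0$, this is $\exp\bigl(m\log\binom{n-1}{k}\cdot(1+o(1))\bigr)$; combined with $m\loga = o(a\loga)$ from the previous paragraph — here one has to be slightly careful to replace $\log\binom{n-1}{k}$ by $\loga$, using that $\log(\binom{n-1}{k}/m)\le \log(\binom{n-1}{k}/a)+\log(a/m)$ and that $m\log(a/m)$ is also $o(a\loga)$ because $m\log(a/m)\le a$ when... — actually the cleanest route is: $m\log\frac{e\binom{n-1}{k}}{m} \le m\log\binom{n-1}{k} + O(m) \le C'\frac{a\loga}{\delta k\log\frac{n-1}{k}}\cdot\log\binom{n-1}{k}$, and $\log\binom{n-1}{k} = \Theta(k\log\frac{n-1}{k})$, so this is $O\bigl(\frac{1}{\delta}a\loga\bigr)$, which is NOT obviously $o(a\loga)$ — so I need the sharper counting of the next paragraph.

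To get the $o(a\loga)$ I would not count arbitrary $m$-subsets, but exploit that the elements of $\A_\F^{\geq\delta}$ have large degree into a common set $\B_\F\subseteq\K_x$ of size $a$. Fix $\B_\F$ — there are $\binom{\binom{n-1}{k-1}}{a}\le\eoaloga$ choices, which is absorbed into the error term. Given $\B_\F$, every $A\in\A_\F^{\geq\delta}$ lies in the set $N^{\geq\delta}(\B_\F):=\{A : d(A,\B_\F)\ge\delta\binom{n-k-1}{k-1}\}$, and a standard double-counting (each $A$ sends $\ge\delta\binom{n-k-1}{k-1}$ edges to $\B_\F$, while each vertex of $\B_\F$ has only $\binom{n-k}{k}$ neighbours total) gives $|N^{\geq\delta}(\B_\F)|\le \frac{a\binom{n-k}{k}}{\delta\binom{n-k-1}{k-1}} = \frac{(n-k)a}{\delta k}$. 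So the number of choices for $\A_\F^{\geq\delta}$, given $\B_\F$, is at most $\binom{(n-k)a/(\delta k)}{m}\le \bigl(\frac{e(n-k)a}{\delta k m}\bigr)^{m}\le\exp\bigl(m\log\frac{e(n-k)a}{\delta k m}\bigr)$. Since $m\le \frac{C a\loga}{\delta k\log\frac{n-1}{k}}$ and $\log\frac{n-1}{k}\ge\log 2$, the logarithm $\log\frac{e(n-k)a}{\delta k m}$ is $O\bigl(\log\frac{n}{k}+\log\frac{\log\frac{n-1}{k}}{\loga}+\log\delta^{-1}\bigr)=O(\log n)$, hence the exponent is $O\bigl(\frac{a\loga\log n}{\delta k\log\frac{n-1}{k}}\bigr)$, and the hypothesis $\delta=\omega\bigl(\frac{n\log\frac{n}{n-2k}}{k\log\binom{n-1}{k}}\bigr)$ — together with $\log\binom{n-1}{k}=\Theta(k\log\frac{n-1}{k})$ and $\log n = O(k\log\frac{n-1}{k})\cdot$(something tending to $0$ after dividing by $\delta k$)... — forces this to be $o(a\loga)$. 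Multiplying the two counts and noting $\eoaloga\cdot\eoaloga=\eoaloga$ finishes the proof.

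The main obstacle I anticipate is precisely the bookkeeping in the last paragraph: verifying that the hypothesis on $\delta$ is exactly strong enough to kill the $\log n$ (or $\log\frac{n}{k}$, or $\log\delta^{-1}$) factor that appears when one counts subsets of $N^{\geq\delta}(\B_\F)$, for the full range $2k+1\le n$. One has to be careful about the two extreme regimes — $n-2k$ small (where $\log\frac{n}{n-2k}$ is large and the constraint on $\delta$ is weakest at the $\log\binom{n-1}{k}$ in the denominator) and $n$ large compared to $k$ (where $\log\frac{n-1}{k}$ and $\log n$ are comparable) — and confirm the estimate $m\log(\text{ambient set size})=o(a\loga)$ survives in both. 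Whether one routes through $N^{\geq\delta}(\B_\F)$ or some cleverer encoding is a choice; the double-counting bound $|N^{\geq\delta}(\B_\F)|\le\frac{(n-k)a}{\delta k}$ is the crucial gain over the trivial $\binom{\binom{n-1}{k}}{m}$, since it replaces the ambient $\binom{n-1}{k}$ by something only $\mathrm{poly}(n)$ times $a$.
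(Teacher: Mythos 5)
Your approach has two independent flaws, one of which is fatal.

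\textbf{The fatal gap: fixing $\B_\F$ is too expensive.} You assert that the number of choices for $\B_\F\subseteq\K_x$, namely $\binom{\binom{n-1}{k-1}}{a}$, is $\eoaloga$ and ``absorbed into the error term.'' This is false. By the standard lower bound $\binom{r}{s}\geq (r/s)^s$,
\[ \binom{\binom{n-1}{k-1}}{a}\;\geq\;\exp\!\left(a\log\frac{\binom{n-1}{k-1}}{a}\right)\;=\;\exp\!\left(a\left(\loga-\log\frac{n-k}{k}\right)\right)\;=\;\exp\!\left((1+o(1))\,a\loga\right), \]
where the last equality uses $\log\frac{n-k}{k}=o(\loga)$ (which follows from Lemma~\ref{lem::logafbound} when $n\leq Ck$). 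In other words, the number of choices for $\B_\F$ is already of the same exponential order as the trivial bound~\eqref{eq::trivialF}, i.e.\ $\exp(\Theta(a\loga))$ and not $\exp(o(a\loga))$. If you could afford to reveal $\B_\F$ for free, the entire lemma would be trivial — the whole difficulty is that $\B_\F$ is exactly as expensive as $\A_\F$ to describe. The paper circumvents this by revealing only a small ``seed'' $\Y\subseteq\B_\F$ of size $3ap_1=o(a)$, chosen via a probabilistic existence argument, and then correcting $N(\Y)$ with small sets $\A_1,\A_2,\A_3$. Each piece of that certificate costs $\eoaloga$ precisely because each piece is a set of size $o(a)$, or a subset of a set of moderate size. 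This small-seed-plus-corrections structure is the whole point of the lemma (and is what makes it a container-type argument); counting subsets of $N^{\geq\delta}(\B_\F)$ after revealing all of $\B_\F$ is qualitatively different and cannot work.

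\textbf{A secondary error: your bound on $m=|\A^{\geq\delta}_\F|$ is wrong.} From $\delta\binom{n-k-1}{k-1}m\leq e(\A,\B)\leq\binom{n-k-1}{k-1}a$ one gets only $m\leq a/\delta$; the $\binom{n-k-1}{k-1}a$ term is not controlled by the hypothesis on $\delta$, since the two contributions both carry the factor $1/\delta$ and $\loga/\log\binom{n-1}{k}\leq 1$. In particular $m\loga$ is \emph{not} $o(a\loga)$ — it can be as large as $a\loga$ since $m\leq a$ trivially. The parenthetical in your first display (``which controls the $\binom{n-k-1}{k-1}a$ term relative to the main one'') is precisely where this reasoning fails. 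This error on its own would already sink the ``trivial'' count in your second paragraph (as you correctly suspected), and it also means the exponent in your third paragraph should be evaluated at the worst case $m\approx a/\delta$, which then requires $\frac{1}{\delta}\log\frac{n-k}{k}=o(\loga)$. For $n\leq Ck$ this happens to be implied by the stated hypothesis, but for $n/k\to\infty$ it is strictly stronger, so even setting aside the $\B_\F$ issue, the encoding does not cover the lemma's full range of $n$.
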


\begin{proof}
Let $D = \binom{n-k-1}{k-1}$, and choose $p_1$ so that $p_1 = \omega(1/\delta D)$ and $\frac{p_1}{p_0} \frac{n}{k} \log\frac{n}{n-2k} = o(1)$. This is possible because the assumption on $\delta$ implies that
\[ \frac{1}{p_0} \frac{n}{k} \log\frac{n}{n-2k} = o(\delta D) .\]
To each $\A_\F^{\geq\delta}$ we give a certificate $(\Y,\A_1,\A_2,\A_3)$ with the following properties:
\begin{enumerate}
\item $\Y \subseteq \K_x$,\, $|\Y| \leq 3ap_1$,
\item $\A_1 \subseteq N(\Y)$,\, $|\A_1| \leq \frac{p_1}{p_0} \cdot \frac{30n}{\theta k} \cdot a \loga$,\, $|N(\Y) \setminus \A_1| \leq a$,
\item $\A_2 \subseteq N(\Y) \setminus \A_1$,\, $|\A_2| \leq \frac{10}{\log\binom{n-1}{k}} a \log\frac{\binom{n-1}{k}}{a}$,
\item $\A_3 \subseteq \binom{[n]\setminus\{x\}}{k}$,\, $|\A_3| = o(a)$,
\item $\A_\F^{\geq\delta} = (N(\Y)\setminus (\A_1 \cup \A_2)) \cup \A_3$.
\end{enumerate}
By the last property, we can reconstruct $\A_\F^{\geq\delta}$ from its certificate. We postpone counting the number of choices of possible certificates until after we prove the existence of such certificates.

Fix $\F \in \T^1_x(a)$, and let $\A^{\geq\delta} = \A^{\geq\delta}_\F$ and $\A^{<\delta} = \A^{<\delta}_\F$. We prove that there is a particular choice of $\Y$ for which the elements of the certificate are $\Y$, $\A_1 := N(\Y) \setminus \A$, $\A_2 := (N(\Y) \setminus \A_1) \cap \A^{<\delta}$, and $\A_3 := \A^{\geq\delta} \setminus N(\Y)$. Select a random subset $\Y$ of $\B$, including each vertex independently with probability $p_1$. Then the expected size of $\Y$ is
\[ \E|\Y| = ap_1 .\]
The probability that $A \in \A^{\geq\delta}$ is not in the neighborhood of $\Y$ is at most $(1-p_1)^{\delta D}$, so
\[ \E|\A^{\geq\delta} \setminus N(\Y)| \leq a(1-p_1)^{\delta D} \leq ae^{-p_1 \delta D} = a e^{-\omega(1)} = o(a) .\]
Note that by Observation~\ref{obs::AbarB}, Theorem~\ref{thm::eF}, and the definition of $\T^1$,
\[ \E|N(\Y)\setminus\A| \leq \E \sum_{v\in\Y} d(v,\bar\A) = \E \sum_{v\in\B} 1_{v\in\Y} d(v,\bar\A) = p_1 e(\bar\A,\B) \]
\[ = p_1 \left( \binom{n-k-1}{k} a + e(\A,\bar\B) \right) = p_1 \left( \frac{\log\left(n\binom{n-1}{k}\right)}{k} \cdot \frac{1}{p_0} a (n-2k) + e(\A,\bar\B) \right) \]
\[ \leq p_1 \left( \frac{n}{\theta k}\cdot\frac{\log\left(n\binom{n-1}{k}\right)}{\log\binom{n-1}{k}} + 1 \right) e(\F) \leq \frac{p_1}{p_0} \cdot \frac{10n}{\theta k} \cdot a\loga .\]
By Markov's inequality, there exists a choice of $\Y$ such that
\[ |\Y| \leq 3ap_1 ,\]
\[ |\A^{\geq\delta} \setminus N(\Y)| \leq o(a) ,\]
\[ |N(\Y) \setminus \A| \leq \frac{p_1}{p_0} \cdot \frac{30n}{\theta k} \cdot a\loga .\]
Since $N(\Y) \setminus \A_1 \subseteq A$ by definition, all that remains to be shown is the upper bound on $|\A_2|$. Observe that
\[ (1-\delta) D |\A^{<\delta}| \leq e(\A,\bar\B) \leq e(\F) \leq \frac{5}{p_0} a \log\frac{\binom{n-1}{k}}{a} ,\]
so since $\delta \leq 1/2$,
\[ |\A_2| \leq |\A^{<\delta}| \leq \frac{10}{\log\binom{n-1}{k}} a \log\frac{\binom{n-1}{k}}{a} .\]

We count the number of possible certificates by counting each part of the certificate sequentially. Since $p_1 = o(1)$, $|\Y| = o(a)$, so the number of choices for $\Y$ is at most
\[ \binom{\binom{n-1}{k}}{o(a)} \leq \exp\left( o\left( a \loga \right) \right) .\]
Note that
\[ |N(\Y)| \leq \binom{n-k}{k}|\Y| \leq 3ap_1 \binom{n-k}{k} ,\]
so given $\Y$, the number of choices for $\A_1$ is at most, using Lemma~\ref{lem::logafbound},
\[ \begin{pmatrix} 3ap_1 \binom{n-k}{k} \\ \leq \frac{p_1}{p_0} \cdot \frac{30n}{\theta k} \cdot a \loga \end{pmatrix} \leq \exp\left( O\left( \frac{p_1}{p_0} \cdot \frac{n}{k} \cdot a \loga \log\frac{p_0 \binom{n-k}{k}}{\frac{30n}{\theta k} \loga} \right) \right) \]
\[ \leq \exp\left( O\left( \frac{p_1}{p_0} \cdot \frac{n}{k} \cdot a \loga \log \frac{6(n-k)}{n-2k} \right) \right) = \exp\left( O\left( \frac{p_1}{p_0}\cdot \frac{n}{k} \log\frac{n}{n-2k} \right) a \loga \right) \]
\[ = \eoaloga .\]
Given $\Y$ and $\A_1$, the number of possible choices for $\A_2$ is at most, using Observation~\ref{obs::abound},
\[ \begin{pmatrix} a \\ \leq \frac{10}{\log\binom{n-1}{k}} a \log\frac{\binom{n-1}{k}}{a} \end{pmatrix} \leq \exp\left( o\left( a \loga \right) \right) .\]
Finally, since $|\A_3| = o(a)$, the number of choices for $\A_3$ is at most
\[ \binom{\binom{n-1}{k}}{o(a)} \leq \exp\left( o\left( a \loga \right) \right) .\qedhere\]
\end{proof}

Let $\B_\F^{\geq\delta} = \left\{B \in \B_\F : d(B, \A_\F) \geq \delta \binom{n-k}{k}\right\}$ and $\B_\F^{<\delta} = \B_\F \setminus \B_\F^{\geq\delta}$. There is a nearly identical lemma for $\B_\F^{\geq\delta}$ as well.
\begin{lemma}\label{lem::Bhigh}
If $\delta = \omega\left( \frac{k}{n} \frac{\log\log\binom{n-1}{k}}{\log\binom{n-1}{k}} \right)$ and $\delta \leq 1/2$, then the number of $\B_\F^{\geq\delta}$ across all $\F \in \T_x^1(a)$ is at most $\exp\left( o\left( a \log\frac{\binom{n-1}{k}}{a} \right) \right)$.
\end{lemma}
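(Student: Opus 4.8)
The plan is to mimic the proof of Lemma~\ref{lem::Ahigh} with the roles of $\A_\F$ and $\B_\F$ interchanged, keeping careful track of the asymmetry of the bipartite graph between $\binom{[n]\setminus\{x\}}{k}$ and $\K_x$: a vertex on the $\A$-side has $\binom{n-k-1}{k-1}$ neighbours in $\K_x$, whereas a vertex of $\K_x$ has $\binom{n-k}{k}=\frac{n-k}{k}\binom{n-k-1}{k-1}$ neighbours on the other side. First I would fix a probability $p_2$ with $p_2=\omega\!\left(\frac{1}{\delta\binom{n-k}{k}}\right)$ and $p_2=o\!\left(\frac{p_0}{\log\log\binom{n-1}{k}}\right)$; such a $p_2$ exists and is $o(1)$ since the hypothesis on $\delta$, together with the bound $p_0\binom{n-k-1}{k-1}=\Omega(\log\binom{n-1}{k})$ coming from~\eqref{eq::p0} and the standard estimate for $\binom{n-1}{k}$, yields $\delta\binom{n-k}{k}p_0=\omega(\log\log\binom{n-1}{k})$.

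To each $\B_\F^{\geq\delta}$ with $\F\in\T_x^1(a)$ I would attach a certificate $(\X,\B_1,\B_2,\B_3)$ with $\X\subseteq\A_\F$ and $|\X|\leq 3ap_2$; $\B_1\subseteq N(\X)\cap\K_x$ with $|\B_1|=O\!\left(\frac{p_2}{p_0}a\loga\right)$ and $|N(\X)\cap\K_x\setminus\B_1|\leq a$; $\B_2\subseteq N(\X)\cap\K_x\setminus\B_1$ with $|\B_2|\leq|\B_\F^{<\delta}|$; and $\B_3\subseteq\K_x$ with $|\B_3|=o(a)$, so that $\B_\F^{\geq\delta}=\left(N(\X)\cap\K_x\setminus(\B_1\cup\B_2)\right)\cup\B_3$ and the certificate determines $\B_\F^{\geq\delta}$. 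As in Lemma~\ref{lem::Ahigh}, the existence of such a certificate is shown by taking $\X$ a random $p_2$-subset of $\A_\F$ and setting $\B_1:=N(\X)\cap\K_x\setminus\B_\F$, $\B_2:=N(\X)\cap\B_\F^{<\delta}$, $\B_3:=\B_\F^{\geq\delta}\setminus N(\X)$; Markov's inequality applied to $|\X|$, $|\B_1|$, and $|\B_3|$ produces a good $\X$. Here $\E|\B_3|\leq a(1-p_2)^{\delta\binom{n-k}{k}}=o(a)$ uses $p_2\delta\binom{n-k}{k}=\omega(1)$, while $\E|\B_1|\leq p_2\,e(\A_\F,\bar\B_\F)\leq p_2\,e(\F)\leq\frac{5p_2}{p_0}a\loga$ is in fact cleaner than its analogue in Lemma~\ref{lem::Ahigh}, since the term $\binom{n-k-1}{k}a$ from Observation~\ref{obs::AbarB} does not intervene. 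The bound $|\B_2|\leq|\B_\F^{<\delta}|$ is immediate, and to control $|\B_\F^{<\delta}|$ I use Observation~\ref{obs::AbarB} in the form $(1-\delta)\binom{n-k}{k}|\B_\F^{<\delta}|\leq e(\bar\A_\F,\B_\F)=\binom{n-k-1}{k}a+e(\A_\F,\bar\B_\F)$, together with $e(\A_\F,\bar\B_\F)\leq e(\F)$ and $\delta\leq 1/2$.

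Then I would count the certificates factor by factor, as in Lemma~\ref{lem::Ahigh}. Since $p_2=o(1)$ we have $|\X|=o(a)$, so there are at most $\binom{\binom{n-1}{k}}{o(a)}=\eoaloga$ choices for $\X$, and likewise for $\B_3$. Given $\X$, since $|N(\X)\cap\K_x|\leq\binom{n-k-1}{k-1}|\X|$, the bound on $|\B_1|$ together with Lemma~\ref{lem::logafbound} and $p_2=o(p_0/\log\log\binom{n-1}{k})$ gives $\eoaloga$ choices for $\B_1$. Given $\X$ and $\B_1$, the number of choices for $\B_2$ is at most $\binom{a}{\leq|\B_\F^{<\delta}|}$, which I claim is also $\eoaloga$. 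Multiplying these bounds yields the lemma.

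The one step that does not transcribe verbatim from Lemma~\ref{lem::Ahigh}, and which I expect to be the main obstacle, is the bound $\binom{a}{\leq|\B_\F^{<\delta}|}=\eoaloga$: unlike $|\A_\F^{<\delta}|=O\!\left(\frac{\loga}{\log\binom{n-1}{k}}\,a\right)$, the set $\B_\F^{<\delta}$ is typically almost all of $\B_\F$, because of the extra term $\binom{n-k-1}{k}a=\frac{n-2k}{k}\binom{n-k-1}{k-1}a$. The resolution is to note, from the inequality above, that $|\B_\F^{<\delta}|\leq\frac{2}{\binom{n-k}{k}}\left(\binom{n-k-1}{k}a+e(\F)\right)=2a\left(\frac{n-2k}{n-k}+\frac{5}{p_0\binom{n-k}{k}}\loga\right)$, where $\frac{5}{p_0\binom{n-k}{k}}\loga=O\!\left(\frac{\loga}{\log\binom{n-1}{k}}\right)$ by $p_0\binom{n-k-1}{k-1}=\Omega(\log\binom{n-1}{k})$ and $n-k\geq k$, and $\frac{n-2k}{n-k}\leq\frac{2(n-2k)}{n}=O\!\left(\frac{\loga}{\log\binom{n-1}{k}}\right)$ by Lemma~\ref{lem::logafbound}; hence $|\B_\F^{<\delta}|=O\!\left(\frac{\loga}{\log\binom{n-1}{k}}\,a\right)$, which matches the shape of the bound used for $\A_2$ in Lemma~\ref{lem::Ahigh} up to constants, and the same estimate of a partial-sum binomial coefficient then gives $\eoaloga$. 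A lesser technical point is the compatibility of the two constraints on $p_2$, which is exactly where the strength of the hypothesis $\delta=\omega\!\left(\frac{k}{n}\frac{\log\log\binom{n-1}{k}}{\log\binom{n-1}{k}}\right)$ is used.
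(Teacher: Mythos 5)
Your proposal is correct and takes the same route as the paper: it mirrors the certificate argument of Lemma~\ref{lem::Ahigh} with the roles of $\A$ and $\B$ swapped, using $D=\binom{n-k}{k}$, the constraint $\tfrac{p_2}{p_0}\log\log\binom{n-1}{k}=o(1)$, and the simpler estimate $\E|N(\X)\setminus\B|\le p_2\,e(\A,\bar\B)\le\tfrac{5p_2}{p_0}a\loga$, exactly the three modifications the paper flags. You also correctly isolate and resolve the one step that does not transcribe verbatim, namely the bound $|\B_\F^{<\delta}|=O\bigl(\tfrac{\loga}{\log\binom{n-1}{k}}\,a\bigr)$ obtained from Observation~\ref{obs::AbarB} together with Lemma~\ref{lem::logafbound}.
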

The proof is essentially the same as the proof of the lemma for $\A_\F^{\geq\delta}$, except that we use $D = \binom{n-k}{k}$, $\frac{p_1}{p_0} \log\log\binom{n-1}{k} = o(1)$, and for $\X \subseteq \A$ randomly chosen with probability $p_1$, we have
\[ \E|N(X)\setminus \B| \leq p_1 e(\A,\bar\B) \leq 5 \frac{p_1}{p_0} a \loga .\]

\subsection{Large diversity when $n-2k$ is small}\label{sec::hit:largea}

The following proposition demonstrates our most basic way to show that the $\F \in \T^1$ are not independent in $K_p(n,k)$ w.h.p.: we use Lemmas~\ref{lem::Ahigh} and~\ref{lem::Bhigh} to bound the number of choices for $\A_\F^{\geq\delta}$ and $\B_\F^{\geq\delta}$, while we bound the number of $\A_\F^{<\delta}$ and $\B_\F^{<\delta}$ by controlling the size of those sets with $e(\F)$. This trick works when $n-2k$ is small and $a_\F$ is large.

\begin{prop}\label{prop::large k large a}
For any constant $\ep>0$, when $n-2k = o(n)$, no $\F \in \T^1$ with $\logaF = o(n)$ is independent in $K_p(n,k)$ w.h.p., where $p = (1-\ep)p_0$.
\end{prop}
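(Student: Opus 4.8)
The plan is to apply the union bound $\sum_{\F} (1-p)^{e(\F)} = o(1)$ over the relevant family $\F\in\T^1$ with $\loga = o(n)$, but first splitting each $\F$ into a ``high-degree'' part and a ``low-degree'' part on both the $\A$ and $\B$ sides. Fix $x$ and $a$; by the standard reductions we only need to control $|\T^1_x(a)|$ refined by the extra condition $\loga = o(n)$. Choose a threshold parameter $\delta$ small enough that both Lemma~\ref{lem::Ahigh} and Lemma~\ref{lem::Bhigh} apply (this is where $n-2k=o(n)$ is used: it forces $\log\frac{n}{n-2k} = o(\log\binom{n-1}{k})$ up to the $k/n$ factor, so such a $\delta$ exists), and at the same time $\delta$ should be at most $1/2$. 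The point is that $\F$ is determined by $x$, $\A_\F$, and $\B_\F$, and $\A_\F = \A_\F^{\geq\delta}\cup\A_\F^{<\delta}$, $\B_\F = \B_\F^{\geq\delta}\cup\B_\F^{<\delta}$, so I can count the four pieces separately.

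The counting goes as follows. By Lemmas~\ref{lem::Ahigh} and~\ref{lem::Bhigh}, the number of choices for $\A_\F^{\geq\delta}$ and for $\B_\F^{\geq\delta}$ is each $\eoaloga$. For the low-degree parts, I bound the \emph{sizes}: every $A\in\A_\F^{<\delta}$ contributes fewer than $\delta\binom{n-k-1}{k-1}$ edges to $E(\A_\F,\bar\B_\F)$ in the worst direction, but the useful bound is $(1-\delta)\binom{n-k-1}{k-1}|\A_\F^{<\delta}| \le e(\A_\F,\bar\B_\F)\le e(\F)\le \frac{5}{p_0} a\loga$, so, using $\delta\le 1/2$ and $p_0 \ge k\log 2/\binom{n-k-1}{k-1}$, we get $|\A_\F^{<\delta}| = O\big(\frac{a\loga}{k}\big)$; since $k=\omega(1)$ and $n-2k=o(n)$ this is $o(a)$ provided one also uses $\loga = o(n)$ to absorb the logarithmic overhead. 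Wait — more carefully, $|\A_\F^{<\delta}| = O(a\loga/k)$, and the number of ways to choose a subset of $\binom{[n]\setminus\{x\}}{k}$ of this size is $\exp(O(\frac{a\loga}{k}\cdot \loga)) = \exp(O(\frac{a (\loga)^2}{k}))$, which is $\eoaloga$ precisely because $\loga = o(n) = o(k)$ here. The same argument with $\binom{n-k}{k}$ in place of $\binom{n-k-1}{k-1}$ handles $\B_\F^{<\delta}$. Multiplying the four counts, the number of $\F\in\T^1_x(a)$ with $\loga=o(n)$ is $\eoaloga$.

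Finally, combine with the edge lower bound: by the definition of $\T^1$ and equation~\eqref{eq::eF:DK} we have $e(\F) \ge \theta \frac{1}{p_0} a\loga$, so $(1-p)^{e(\F)} \le \exp(-p\, e(\F)) \le \exp\big(-(1-\ep)\theta\, a\loga\big)$. Multiplying by the count $\eoaloga$ gives, for each $x$ and $a$, a contribution at most $\exp\big(-(1-\ep)\theta\, a\loga + o(a\loga)\big) = \exp(-\Omega(a\loga))$. Summing over $a$ from $1$ to $\binom{n-1}{k}$ (the $a=1$ term already gives $\exp(-\Omega(\log\binom{n-1}{k})) = o(1/n)$, and larger $a$ only help) and then over the $n$ choices of $x$, exactly as in the proof of Lemma~\ref{lem::bigeF}, yields $o(1)$.

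The main obstacle is calibrating $\delta$: it must simultaneously satisfy the lower-bound hypotheses of both Lemma~\ref{lem::Ahigh} (which needs $\delta = \omega(\frac{n\log\frac{n}{n-2k}}{k\log\binom{n-1}{k}})$) and Lemma~\ref{lem::Bhigh} (which needs $\delta = \omega(\frac{k}{n}\frac{\log\log\binom{n-1}{k}}{\log\binom{n-1}{k}})$) while staying below $1/2$; checking that the window is nonempty is exactly where the hypothesis $n-2k=o(n)$ enters, and one should verify that in this regime $\frac{n}{k}\log\frac{n}{n-2k} = o(\log\binom{n-1}{k})$ so that a valid (in fact, $o(1)$) choice of $\delta$ exists. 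The secondary subtlety is making sure the low-degree size bounds $O(a\loga/k)$ really do give $\eoaloga$-many subsets; this is where the restriction $\loga = o(n)$ in the statement is essential, and it should be invoked explicitly at that step.
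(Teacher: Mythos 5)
Your overall approach is the same as the paper's: decompose $\A_\F$ and $\B_\F$ into high-degree and low-degree parts, apply Lemmas~\ref{lem::Ahigh} and~\ref{lem::Bhigh} for the high-degree pieces, size-bound the low-degree pieces, and union-bound using~\eqref{eq::eF:DK}. One remark: you do not actually need a delicate calibration of $\delta$; the paper simply takes $\delta=1/2$, and the conditions $\frac{n\log\frac{n}{n-2k}}{k\log\binom{n-1}{k}}=o(1)$ and $\frac{k}{n}\cdot\frac{\log\log\binom{n-1}{k}}{\log\binom{n-1}{k}}=o(1)$ both hold trivially when $n-2k=o(n)$, so $\delta=1/2$ works at once for both lemmas.

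There is a genuine gap in the treatment of $\B_\F^{<\delta}$. You assert that ``the same argument with $\binom{n-k}{k}$ in place of $\binom{n-k-1}{k-1}$ handles $\B_\F^{<\delta}$,'' but the two sides are not symmetric. For $\A^{<\delta}$ you use $(1-\delta)\binom{n-k-1}{k-1}|\A^{<\delta}|\le e(\A,\bar\B)\le e(\F)$, which works because edges from $\A$ to $\bar\B$ are edges of $\F$. The analogous inequality for $\B^{<\delta}$ would read $(1-\delta)\binom{n-k}{k}|\B^{<\delta}|\le e(\bar\A,\B)$, but $e(\bar\A,\B)$ consists of edges between $\bar\A\not\subseteq\F$ and $\B\not\subseteq\F$ and hence is \emph{not} bounded by $e(\F)$. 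You need Observation~\ref{obs::AbarB}, $e(\bar\A,\B)=\binom{n-k-1}{k}a+e(\A,\bar\B)$, and then a second use of the hypothesis $n-2k=o(n)$ to control the extra term $\binom{n-k-1}{k}a=\frac{n-2k}{n-k}\binom{n-k}{k}a=o\!\left(\binom{n-k}{k}a\right)$. Your closing remark that $n-2k=o(n)$ enters ``exactly'' in calibrating $\delta$ is therefore not right; it is also needed here to get $|\B^{<\delta}|=o(a)$.

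A minor quantitative slip: the number of subsets of $\binom{[n]\setminus\{x\}}{k}$ of size $m=O(a\loga/k)$ is not $\exp\bigl(O(m\loga)\bigr)$; since $m\le a$ we have $\log\frac{\binom{n-1}{k}}{m}\ge\loga$, so your claimed exponent underestimates $\binom{\binom{n-1}{k}}{m}\le\exp\!\left(m\log\frac{e\binom{n-1}{k}}{m}\right)$. The correct route (as in the paper) is to observe $m=o(a)$ and use $\binom{\binom{n-1}{k}}{o(a)}\le\eoaloga$, which follows since, writing $m=\epsilon a$ with $\epsilon\to0$, the exponent is $\epsilon a\bigl(\loga+\log\frac{e}{\epsilon}\bigr)=o(a\loga)+o(a)$ and $\loga\ge\log 2$ by Observation~\ref{obs::abound}. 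Your final conclusion for this step happens to be correct, but the stated intermediate bound is not a valid upper bound.
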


\begin{proof}
By Lemmas~\ref{lem::Ahigh} and~\ref{lem::Bhigh}, since
\[ \frac{n\log\frac{n}{n-2k}}{k\log\binom{n-1}{k}} = o(1) \quad\text{ and }\quad \frac{k}{n} \frac{\log\log\binom{n-1}{k}}{\log\binom{n-1}{k}} = o(1) ,\]
the number of distinct $\A_\F^{\geq 1/2}$ and $\B_\F^{\geq 1/2}$ across all $\F \in \T^1_x(a)$ is at most
\[ \exp\left( o\left( a\loga \right)\right) .\]
Note that since $\loga = o(n)$ and $\F \in \T^1$,
\[ \frac{1}{2} \binom{n-k-1}{k-1} |\A^{<1/2}| \leq e(\A,\bar\B) \leq e(\F) \leq \frac{5}{p_0} a \loga = o\left( \frac{1}{p_0} an \right) ,\]
and since $\log\binom{n-1}{k} = \Theta(n)$, we have $|\A^{<1/2}| = o(a)$. Thus the number of such $\A^{<1/2}$ is at most
\[ \binom{\binom{n-1}{k}}{o(a)} \leq \exp\left( o\left( a \loga \right) \right) .\]
We perform a similar calculation for $\B^{<1/2}$. Using Observation~\ref{obs::AbarB}, \eqref{eq::eF:DK}, and $n-2k = o(n)$,
\[ \frac{1}{2}\binom{n-k}{k}|\B^{<1/2}| \leq e(\bar\A,\B) = \binom{n-k-1}{k} a + e(\A,\bar\B) = \binom{n-k-1}{k} a + o\left( \frac{an}{p_0} \right) ,\]
so $|\B^{<1/2}| = o(a)$, and we can proceed as before.

Taking the union bound and using~\eqref{eq::eF:DK}, we have that the probability some $\F \in \T^1_x(a)$ is independent is at most
\[ \exp\left( o\left( a \loga \right) \right) \cdot (1-p)^{\theta \frac{1}{p_0} a \loga} \leq \exp\left( -\frac{\theta}{2} a \loga \right) .\]
We take a union bound over all possible choices for $a$ and $x$ to finish the proof.
\end{proof}

\section{Hitting time for $n=2k+1$}\label{sec::hit}

For the entirety of this section, let $n=2k+1$, fix a constant $\ep>0$ sufficiently small, and let $p$ and $p'$ be such that
\[ p = 1-4^{-(1-\ep)} < \frac{3}{4} < 1-4^{-(1+\ep)} = p' .\]
We prove Theorems~\ref{thm::hitting:indep} and~\ref{thm::hitting:EKR} in Section~\ref{sec::hit:proof} as quick corollaries of a characterization of independent sets in $K_{\tau_{\text{super}}}(n,k)$ that holds with high probability, which we prove in Section~\ref{sec::hit:reduc}. To get this characterization, we first show in Section~\ref{sec::hit:binom} that, in a rigorous sense, $K_p(n,k)$ is a subgraph of $K_{\tau_{\text{super}}}(n,k)$ w.h.p. In Section~\ref{sec::hit:2link}, we describe a method of efficiently counting the number of possible $\A_\F$ using components in an auxiliary graph. We apply this method in Section~\ref{sec::hit:smalla} to show that no `connected' $\F$ is independent in $K_p(n,k)$ w.h.p., and we remove the connectedness condition in Section~\ref{sec::hit:reduc}.

\subsection{Approximation by the binomial model}\label{sec::hit:binom}

Let $G$ be a graph. Recall the definition of a random subgraph process of $G$: take a permutation of $E(G)$ uniformly at random and consider the initial segments of this permutation as a sequence of random spanning subgraphs of $G$. It will be convenient for us to generalize this model to the \emph{continuous time random subgraph process}, see Section~1.1 of~\cite{JLR}, which also generalizes the $K_p(n,k)$ model, that is, the model where we include each edge independently with a fixed probability. Assign to each edge $e$ of $G$ a uniformly randomly chosen real number $f(e)$ between $0$ and $1$. The underlying probability space is now the set of functions $f: E(G) \to [0,1]$ with the uniform distribution. Let $G_p(f)$ be the spanning subgraph of $G$ with $E(G_p(f)) = \{e \in E(G) : f(e) \leq p\}$. For deterministic $p$, that is, $p$ which do not depend on $f$, $G_p$ is simply the random subgraph of $G$ chosen by including each edge with probability $p$. However, we can let $p$ be a random variable, depending on $f$ and thus the structure of the random subgraph. This relates this model to the random subgraph process: with probability $1$, $f$ is injective, so as we slowly increase $p$ from $0$ to $1$, we randomly add edges one by one. We can analogously define the hitting time $q_P(f)$ for a monotone property $P$ in the $G_p$ model to be the smallest $p \in [0,1]$ such that $G_p(f)$ satisfies property $P$. If $\tau_P$ is the hitting time for $P$ in the random subgraph process, then $G_{q_P} = G_{\tau_P}$ as distributions. Define
\[ q_1(f) = q_1(n,k,f) = \min\{p \in [0,1] : K_p(n,k,f) \text{ has no independent superstars}\} ,\]
\[ q_2(f) = q_2(n,k,f) = \min\{p \in [0,1] : K_p(n,k,f) \text{ has no independent near-stars}\} .\]
To prove Theorem~\ref{thm::hitting:indep}, it suffices to show that $\alpha(K_{q_1(f)}(n,k,f)) = \binom{n-1}{k-1}$ w.h.p., where the `with high probability' is taken with respect to the uniform distribution of $f: E(K(n,k)) \to [0,1]$. Likewise, to prove Theorem~\ref{thm::hitting:EKR}, it suffices to show that $K_{q_2(f)}(n,k,f)$ is EKR w.h.p. To this end, we first show that $p_0$ is a sharp threshold for both containing independent superstars and containing independent near-stars, thus giving the bounds $(1-\ep)p_0 \leq q_1(f) < q_2(f) \leq (1+\ep)p_0$ w.h.p., for every constant\footnote{In fact, $\ep$ can be taken much smaller than a constant, but for $n=2k+1$ constant $\ep$ is sufficient.} $\ep>0$. Proofs of the lower bound appear in~\cite{BNR} and~\cite{DT}: a second moment calculation works for all $n>2k+1$ in~\cite{BNR}, while the simple calculation from~\cite{DT} works for a smaller range including $n=2k+1$ just as effectively. We repeat the argument of~\cite{DT} here for $n=2k+1$ for completeness.

\begin{observation}\label{obs::thresh}
There exists an independent superstar in $K_p(2k+1,k)$ w.h.p., where $p = 1-4^{-(1-\ep)}$. That is, $p < q_1(f)$ w.h.p.
\end{observation}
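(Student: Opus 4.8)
The statement to prove is Observation~\ref{obs::thresh}: for $n=2k+1$ and $p = 1-4^{-(1-\ep)}$, there is an independent superstar in $K_p(n,k)$ w.h.p. The plan is a standard second-moment argument, applied to the random variable $X$ counting independent superstars. First I would fix the star $\K_x$ and a set $A\notin\K_x$; the superstar $\{A\}\cup\K_x$ is independent in $K_p(n,k)$ precisely when $A$ is non-adjacent to every vertex of $\K_x$ it would otherwise be adjacent to, i.e.\ when all $\binom{n-k-1}{k-1}$ potential edges from $A$ into $\K_x$ are absent (note $\K_x$ itself is always independent). Since $n=2k+1$, we have $\binom{n-k-1}{k-1}=\binom{k}{k-1}=k$, so the probability that a fixed superstar based at $x$ with apex $A$ is independent is exactly $(1-p)^{k}=4^{-(1-\ep)k}$. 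Hence
\[ \E[X] = n\binom{n-1}{k}(1-p)^{k} = (2k+1)\binom{2k}{k}4^{-(1-\ep)k}. \]
Using $\binom{2k}{k}=\Theta(4^k/\sqrt{k})$, this gives $\E[X]=\Theta\big(\sqrt{k}\cdot 4^{\ep k}\big)=\omega(1)$, so the first moment blows up, as promised by the discussion around \eqref{eq::p0}.

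Next I would bound the second moment to apply Chebyshev (equivalently, the Paley--Zygmund / second-moment method: $\P(X>0)\ge \E[X]^2/\E[X^2]$). Write $X=\sum_{x,A} X_{x,A}$ where $X_{x,A}$ is the indicator that the superstar $\{A\}\cup\K_x$ is independent. The key point is that the edge sets underlying distinct superstars overlap very little. For a fixed pair $(x,A)$, the relevant edge set is $E(\{A\},\K_x)=\{\{A,S\}:S\in\K_x, S\cap A=\varnothing\}$, a set of $k$ edges all incident to the single vertex $A$. For two superstars $(x,A)$ and $(x',A')$: if $A\neq A'$, the two edge sets are incident to different vertices of $K(n,k)$, hence disjoint, so $X_{x,A}$ and $X_{x',A'}$ are independent; if $A=A'$ but $x\neq x'$ then $A\notin\K_x\cap\K_{x'}$ forces the edge sets to be the $k$ disjoint-pairs from $A$ into $\K_x$ and into $\K_{x'}$ respectively, and these can share at most the (at most one) neighbor of $A$ lying in both $\K_x$ and $\K_{x'}$ — in fact since $n=2k+1$, $A$ has a unique disjoint set $\bar A:=[n]\setminus A$ of size $k+1$... wait, $|\bar A|=k+1\ne k$, so $A$ actually has \emph{no} neighbor at all in $K(2k+1,k)$ among $k$-sets? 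No: a $k$-set disjoint from $A$ must lie in $[n]\setminus A$ which has size $k+1$, so there are exactly $\binom{k+1}{k}=k+1$ such sets — let me recompute: the number of neighbors of $A$ is $\binom{n-k}{k}=\binom{k+1}{k}=k+1$, and the number of these lying in $\K_x$ (i.e.\ containing $x$) is $\binom{n-k-1}{k-1}=\binom{k}{k-1}=k$ when $x\notin A$, consistent with above. So for $A=A'$, $x\ne x'$, the two $k$-edge sets both sit inside the $(k+1)$-edge star at $A$ and overlap in at most $\le k-1$ edges (they differ since they require $x$ resp.\ $x'$). The cleanest bound: $\E[X_{x,A}X_{x',A'}] \le (1-p)^{k}\cdot(1-p)^{\ge 1}$ whenever $(x,A)\ne(x',A')$ and $A=A'$, with only $O(n)$ choices of $x'$ per $(x,A)$, so the total contribution of $A=A'$ terms to $\E[X^2]$ is $O(n)\cdot\E[X]\cdot(1-p) = o(\E[X]^2)$ since $\E[X]=\omega(n)$... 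I should double-check $\E[X]=\omega(n)$: $\E[X]=\Theta(\sqrt k\,4^{\ep k})$ and $n=2k+1$, so yes $\E[X]/n\to\infty$. Therefore $\E[X^2] = \E[X]^2 + O(n\E[X]) + \E[X] = (1+o(1))\E[X]^2$, and $\P(X>0)\to 1$.

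\textbf{Main obstacle.} The argument is almost entirely routine; the only place requiring care is the covariance bookkeeping for pairs of superstars sharing the apex $A$. One must verify that such pairs contribute negligibly: there are $\binom{n}{k}\cdot n\cdot n = O(n^2\binom{n}{k})$ ordered such pairs in total but only $O(n)$ partners per superstar, each contributing at most $(1-p)^{k+1}$ (since the union of the two edge sets has size $\ge k+1$), giving $O(n)\cdot n\binom{n-1}{k}(1-p)^{k+1} = O(n(1-p))\cdot\E[X]$, which is $o(\E[X]^2)$ exactly because $\E[X] = \omega(n)$. Since $\ep>0$ is constant and $n=2k+1$, all these asymptotics are comfortable; the constant-$\ep$ restriction (flagged in the footnote) is exactly what makes $\E[X]=4^{\ep k + o(k)}$ dominate every polynomial-in-$n$ error term. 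I would also remark that the matching lower-tail / w.h.p.\ conclusion for $q_1(f)$ then follows immediately by monotonicity, giving $(1-\ep)p_0 \le q_1(f)$ w.h.p.
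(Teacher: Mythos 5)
Your approach differs from the paper's. The paper reproduces a much simpler argument from Das--Tran: restrict attention to superstars based at a \emph{single} element, say $x=1$. These are pairwise edge-disjoint — since the apexes $A,A'$ lie outside $\K_1$ while the other endpoints of the $k$ relevant edges lie inside $\K_1$, the edge $\{A,A'\}$ can never belong to any such edge set — so the events ``the superstar $\{A\}\cup\K_1$ is independent'' are genuinely mutually independent over $A \in \binom{[n]\setminus\{1\}}{k}$. One then bounds $\bigl(1-4^{-(1-\ep)k}\bigr)^{\binom{2k}{k}} \le \exp\bigl(-4^{\ep k}/(2\sqrt k)\bigr)\to 0$ directly, with no second moment at all. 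Your route (the second-moment method over \emph{all} superstars, in the spirit of~\cite{BNR}) is a valid alternative, but it is doing more work than necessary, precisely because you must control correlations between superstars with different bases.

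And in fact that is where your write-up has a real gap. You claim that for $A \ne A'$ the indicators $X_{x,A}$ and $X_{x',A'}$ are automatically independent because their edge sets are ``incident to different vertices.'' That is false: if $A \cap A' = \varnothing$, $x \in A'$, and $x' \in A$, then the single edge $\{A,A'\}$ lies in \emph{both} edge sets (it runs from $A$ into $\K_x \ni A'$ and from $A'$ into $\K_{x'} \ni A$), and the two indicators are positively correlated. Consequently the bound $\sum_{A\ne A'}\E[X_{x,A}X_{x',A'}] \le \E[X]^2$ you rely on is not justified as stated. The gap is patchable in your framework: for each $(x,A)$ there are $\Theta(k^2)$ such correlated partners, each with joint probability $(1-p)^{2k-1}$, and $k^2(1-p)^{k-1}\E[X] = o(\E[X]^2)$; but you must include this case explicitly. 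Note that fixing a common base, as the paper does, eliminates exactly this overlap — the apexes are never in $\K_x$ when $x$ is shared — which is why the paper's version avoids the issue (and the second moment) entirely.
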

\begin{proof}
Since the superstars based at $1$ are edge-disjoint, the probability that $K_p(n,k)$ contains no independent superstar based at $1$ is
\[ \left( 1 - \left( 1-p \right)^{\binom{n-k-1}{k-1}} \right)^{\binom{n-1}{k}} = \left( 1 - 4^{-(1-\ep)k} \right)^{\binom{2k}{k}} \]
\[ \leq \exp\left( -4^{-(1-\ep)k} \binom{2k}{k} \right) \leq \exp\left(- \frac{4^{\ep k}}{2\sqrt{k}}\right) \to 0 .\qedhere\]
\end{proof}

The following gives a proof of Theorem~\ref{thm::main} for $n=2k+1$ from Theorem~\ref{thm::hitting:EKR}.

\begin{observation}\label{obs::threshupper}
W.h.p., $K_{p'}(2k+1,k)$ has no independent near-stars, where $p' = 1-4^{-(1+\ep)}$. That is, $q_2(f) < p'$ w.h.p.
\end{observation}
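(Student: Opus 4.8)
\section*{Proof proposal for Observation~\ref{obs::threshupper}}

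The plan is a direct first-moment computation. The first step is to translate the event ``$K_{p'}(2k+1,k)$ has an independent near-star'' into a statement about vertex degrees. A near-star $\{A\}\cup(\K_x\setminus\{B\})$ is independent in $K_{p'}(2k+1,k)$ exactly when no edge between $A$ and $\K_x\setminus\{B\}$ survives, since $\K_x\setminus\{B\}$ is automatically intersecting (and $\{A\}$ spans no edge). Because $n=2k+1$, the vertex $A\in\binom{[n]\setminus\{x\}}{k}$ has precisely $\binom{n-k-1}{k-1}=\binom{k}{k-1}=k$ neighbors in $\K_x$ in $K(n,k)$, so such a $B$ exists if and only if at most one of these $k$ Kneser-edges survives in $K_{p'}(2k+1,k)$: if exactly one survives, say $\{A,B_0\}$, take $B=B_0$; if none survives, any $B\in\K_x$ works. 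Hence $K_{p'}(2k+1,k)$ has an independent near-star if and only if there exist $x\in[n]$ and $A\in\binom{[n]\setminus\{x\}}{k}$ with $d_{K_{p'}(2k+1,k)}(A,\K_x)\le 1$, so it suffices to show that w.h.p.\ no such pair $(x,A)$ exists.

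For the second step, fix $x$ and $A$. The number of surviving Kneser-edges from $A$ to $\K_x$ is binomially distributed with parameters $k$ and $p'$, so
\[ \P\big( d_{K_{p'}(2k+1,k)}(A,\K_x)\le 1 \big) = (1-p')^{k} + kp'(1-p')^{k-1} .\]
Taking the union bound over the $n=2k+1$ choices of $x$ and the $\binom{n-1}{k}=\binom{2k}{k}$ choices of $A$, and using $1-p'=4^{-(1+\ep)}$, $\binom{2k}{k}\le 4^{k}$, and $p'\le 1$, the expected number of such pairs is at most
\[ (2k+1)\,4^{k}\Big( 4^{-(1+\ep)k} + k\,4^{-(1+\ep)(k-1)} \Big) \le (2k+1)(k+1)\,4^{1+\ep}\,4^{-\ep k} = o(1) ,\]
since the exponential factor $4^{-\ep k}$ overwhelms the polynomial prefactors. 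By Markov's inequality, w.h.p.\ no pair $(x,A)$ satisfies $d_{K_{p'}(2k+1,k)}(A,\K_x)\le 1$, hence w.h.p.\ $K_{p'}(2k+1,k)$ has no independent near-star; since $\P(q_2(f)=p')=0$, this also gives $q_2(f)<p'$ w.h.p.

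I expect no genuine obstacle. The two points that merit a sentence of care are the combinatorial reduction in the first step --- which uses both that $\K_x\setminus\{B\}$ is intersecting and the exact neighbor count $\binom{n-k-1}{k-1}=k$ that is special to $n=2k+1$ --- and retaining the ``exactly one surviving edge'' term $kp'(1-p')^{k-1}$ in the binomial tail, which contributes an extra factor linear in $k$ that is nonetheless absorbed by $4^{-\ep k}$. The union bound could be tightened (the near-stars based at a fixed $x$ with distinct $A$ have pairwise disjoint edge sets, so one could argue as in Observation~\ref{obs::thresh}), but this is unnecessary since the first moment already tends to $0$.
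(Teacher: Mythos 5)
Your proof is correct and takes essentially the same approach as the paper: both are first-moment calculations that reduce to bounding $\P(\mathrm{Bin}(k,p')\le 1) = (1-p')^{k} + kp'(1-p')^{k-1}$ for each pair $(x,A)$ and then union-bounding over the $n\binom{n-1}{k}$ choices. The only cosmetic difference is in framing the first step --- the paper decomposes into ``independent superstar'' ($d=0$) plus ``independent \emph{maximal} near-star'' ($d=1$, with $A$ and $B$ Kneser-disjoint), while you phrase it directly as the degree condition $d_{K_{p'}}(A,\K_x)\le 1$ --- but these are the same decomposition of the binomial event, and the resulting bound is identical.
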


\begin{proof}
We call a near-star $\{A\} \cup (\K_x \setminus \{B\})$, where $A \not\in \K_x$ and $B \in \K_x$, \emph{maximal} if $A$ and $B$ are disjoint. If a non-maximal near-star is independent, then it is contained in an independent superstar. Therefore if there are no independent superstars and no independent maximal near-stars, then there are no independent near-stars. Note that $\binom{n-k-1}{k-1} = k$ is the degree of vertices of $\binom{[n]\setminus\{x\}}{k}$ to $\K_x$. The probability $A \not\in \K_x$ forms an independent superstar or maximal near-star based at $x$ is
\[ (1-p')^k + kp'(1-p')^{k-1} \leq 4k4^{-(1+\ep)k} .\]
Thus the expected number of independent superstars and maximal near-stars is at most
\[ n\binom{n-1}{k} \cdot 4k4^{-(1+\ep)k} \leq k^2 4^{-\ep k} \to 0 ,\]
so by Markov's inequality $K_{p'}(n,k)$ has no independent near-stars w.h.p.
\end{proof}

Going forward, the dependence of the hitting times $q_1(f)$ and $q_2(f)$ and random graphs $K_p(n,k,f)$ on $f$ will be implicit.

\subsection{Connected components}\label{sec::hit:2link}

Similar to how we reduced from $\T^1$ to $\T^2$ in Section~\ref{sec::prelim:neigh}, we reduce to `connected components' to deal with $\F \in \T^2$ with $a_\F$ small. Our goal is to produce a subcollection $\T^3$ of $\T^2$ such that: for $\F \in \T^2$ with $a_\F$ small, there exists $\F' \in \T^3$ such that $E(\F') \subseteq E(\F)$, and the number of $\A_\F$ across all $\F \in \T^3$ is small. This reduction is helpful because it often reduces the number of $\F$ we take a union bound over.

We define $\T^3$ by a connectedness condition in an auxiliary graph. Let $J_x(n,k)$ be the auxiliary graph on $\binom{[n]\setminus\{x\}}{k}$ defined by $A$ is adjacent to $A'$ if and only if $N(A) \cap N(A') \cap \K_x \neq \emptyset$ in $K(n,k)$, or equivalently, $|A \cup A'| \leq n-k$.\footnote{The Johnson `scheme' is a collection of graphs $J(n,k,c)$, defined on $\binom{[n]}{k}$ with $U$ and $V$ adjacent if and only if $|U \setminus V| = c$ (see~\cite{Johnson}). The Kneser graph is $J(n,k,k)$, and $J(n,k,1)$ and $J(n,k,n-2k)$ were used to establish the main result of~\cite{DK}. The union of $J_x(n,k)$ over all $x$ is the union of $J(n,k,c)$ over all $c \leq n-2k$.} If $A$ is adjacent to $A'$ in $J_x(n,k)$, $A$ and $A'$ are said to be \emph{2-linked with respect to $x$}. For $\A' \subseteq \A \subseteq \binom{[n]\setminus\{x\}}{k}$, we say $\A'$ is a \emph{2-linked component of $\A$ with respect to $x$} if $\A'$ is a connected component of the subgraph of $J_x(n,k)$ induced by $\A$. We say $\F \subseteq \binom{[n]}{x}$ is \emph{2-linked with respect to $x$} if the subgraph of $J(n,k)$ induced by $\F \setminus \K_x$ is connected.

Let $\T^3 = \{\F \in \T^2 : \F \text{ is 2-linked with respect to } x_\F\}$. The extra condition that $\T^3$ imposes over $\T^2$ cuts down on the number of choices for $\A_\F$. To see this, we need the following lemma about counting induced connected subgraphs.

\begin{lemma}[\cite{GK} Lemma 2.1]\label{lem::treebuilding}
Let $G$ be a graph on $n$ vertices of maximum degree at most $d$. The number of $A \subseteq V(G)$ with $|A| = a$ such that $G[A]$ is connected is at most $n(ed)^a$.
\end{lemma}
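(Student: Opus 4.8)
The plan is to reduce the count of connected vertex sets to a count of rooted subtrees, and then to encode each rooted subtree by its sequence of child-counts in preorder together with the actual child-sets, so that the count collapses via Vandermonde's identity. For the first step, to each connected $A\subseteq V(G)$ with $|A|=a$ I would assign an arbitrary spanning tree of $G[A]$ rooted at the vertex of $A$ of smallest label; this is a subtree of $G$ on $a$ vertices with a marked root, and the assignment is injective since $A$ is recovered as the vertex set. Hence the number of such $A$ is at most $\sum_{v\in V(G)}t(v)$, where $t(v)$ is the number of subtrees of $G$ on $a$ vertices rooted at $v$, and it is enough to prove $t(v)\le(ed)^a$ for every $v$.

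For the second step, fix once and for all a linear order on $V(G)$, so that every rooted subtree becomes an ordered (plane) rooted tree, by ordering the children of each vertex according to this linear order. Let $v=u_1,u_2,\dots,u_a$ be its vertices in preorder and let $c_i$ be the number of children of $u_i$, so that $c_1+\cdots+c_a=a-1$ and each $c_i\le d$. The plan is to reconstruct the tree from the sequence $(c_1,\dots,c_a)$ together with, for each $i$, the set of children of $u_i$ (a $c_i$-element subset of the at most $d$ neighbors of $u_i$ in $G$): the sequence $(c_i)$ determines the plane-tree shape on the preorder positions $1,\dots,a$, and then, reconstructing in preorder starting from $u_1=v$, each $u_i$ is pinned down as the appropriate element, in the fixed linear order, of the recorded child-set of its already-reconstructed parent. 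Granting this,
\[
t(v)\le\sum_{\substack{c_1,\dots,c_a\ge 0\\ c_1+\cdots+c_a=a-1}}\ \prod_{i=1}^{a}\binom{d}{c_i}=\binom{da}{a-1}\le(ed)^a,
\]
where the middle equality is Vandermonde's identity (the coefficient of $x^{a-1}$ in $\bigl((1+x)^d\bigr)^a=(1+x)^{da}$) and the final inequality is the standard binomial estimate applied with $\binom{da}{a-1}\le\binom{da}{\le a}\le(ed)^a$. Summing over the at most $n$ choices of root gives $n(ed)^a$.

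The only genuinely delicate point is the injectivity of this encoding: one must check that the preorder child-count sequence really does determine which preorder position is the parent of which, so that the recorded child-sets can be assigned to positions unambiguously during reconstruction. This is precisely the classical correspondence between plane trees and {\L}ukasiewicz paths; with it in hand the remaining verifications are routine, and alternatively one may simply cite \cite{GK} directly.
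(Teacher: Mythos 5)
Your proof is correct. The paper states this lemma as a citation to Galvin and Kahn \cite{GK} and does not reprove it, so there is no in-paper argument to compare against. Your route — pass from a connected $A$ to a spanning tree rooted at its least-labeled vertex, encode the rooted tree by its preorder child-count sequence $(c_1,\dots,c_a)$ together with the actual child-sets, and collapse
\[
\sum_{c_1+\cdots+c_a=a-1}\prod_{i=1}^{a}\binom{d}{c_i}=\binom{da}{a-1}\le (ed)^a
\]
via Vandermonde — is a clean, standard tree-counting argument and gives exactly the stated bound after summing over the $n$ roots. Two minor remarks, neither affecting correctness: for $i\ge 2$ one of $u_i$'s neighbors is its parent in the tree, so in fact $c_i\le d-1$ and replacing $\binom{d}{c_i}$ by $\binom{d-1}{c_i}$ for those $i$ would tighten the per-root bound slightly; and the unambiguity you flag at the end is indeed settled by the plane-tree/{\L}ukasiewicz correspondence — the partial sums of the sequence $(c_i-1)$ determine which preorder slot is attached as a child of which earlier vertex, so during reconstruction each recorded child-set is consumed at a well-defined moment and the encoding is injective.
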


Since $J_x(2k+1,k)$ is $k^2$-regular, we immediately get the following lemma.

\begin{lemma}\label{lem::2linked}
The number of $\A_\F$ with $\F \in \T^3_x(a)$ is at most $\binom{2k}{k} (ek^2)^a$ for $n=2k+1$.
\end{lemma}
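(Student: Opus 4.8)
\textbf{Proof proposal for Lemma~\ref{lem::2linked}.}
The plan is to apply Lemma~\ref{lem::treebuilding} directly to the auxiliary graph $G = J_x(2k+1,k)$, so the only real content is to verify the two numerical inputs: the number of vertices and the maximum degree. The vertex set of $J_x(2k+1,k)$ is $\binom{[n]\setminus\{x\}}{k}$, which has size $\binom{2k}{k}$ since $n=2k+1$. For the degree: by definition $A$ is adjacent to $A'$ in $J_x(2k+1,k)$ exactly when $|A\cup A'|\le n-k = k+1$; since $|A|=|A'|=k$, this means $|A\cap A'|\ge k-1$, and as $A\neq A'$ we in fact have $|A\cap A'|=k-1$, i.e.\ $A$ and $A'$ differ in exactly one element. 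There are $k$ choices for the element of $A$ to delete and $k$ choices for the element of $[n]\setminus(\{x\}\cup A)$ to insert (this set has size $2k-k=k$), so every vertex has exactly $k^2$ neighbors; thus $J_x(2k+1,k)$ is $k^2$-regular.

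Now fix $\F\in\T^3_x(a)$. By the definition of $\T^3$, $\F$ is 2-linked with respect to $x_\F = x$, which by definition means that the subgraph of $J_x(2k+1,k)$ induced on $\A_\F = \F\setminus\K_x$ is connected; moreover $|\A_\F| = a_\F = a$. Distinct families $\F,\F'\in\T^3_x(a)$ with distinct $\A_\F$ give distinct vertex subsets of $J_x(2k+1,k)$, so the number of $\A_\F$ arising this way is at most the number of $a$-element subsets $A\subseteq V(J_x(2k+1,k))$ inducing a connected subgraph. Applying Lemma~\ref{lem::treebuilding} with $n \leftarrow \binom{2k}{k}$ and $d \leftarrow k^2$ bounds this count by $\binom{2k}{k}(ek^2)^a$, as claimed. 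I do not anticipate any obstacle here: the statement is an immediate corollary of Lemma~\ref{lem::treebuilding} once the regularity of $J_x(2k+1,k)$ is recorded, and the computation of the degree is the only step requiring a line of justification.
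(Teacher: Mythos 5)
Your proposal is correct and is exactly the argument the paper intends: the paper states the lemma as an immediate consequence of Lemma~\ref{lem::treebuilding} after noting that $J_x(2k+1,k)$ is $k^2$-regular, and you correctly verify both inputs (vertex count $\binom{2k}{k}$ and the $k^2$-regularity via the $|A\cap A'|=k-1$ characterization). No issues.
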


\subsection{Small diversity and 2-linked $\F$}\label{sec::hit:smalla}

Proposition~\ref{prop::large k large a} already handles $\F$ with $a_\F$ large; the following proposition deals with when $a_\F$ is small. Here we bound the number of $\F$ by counting 2-linked $\F$ with $\B_\F \subseteq N(\A_\F)$. When $a_\F$ is very small, we use a trivial bound on $e(\F)$ instead of Theorem~\ref{thm::eF}. We extend this proposition to all $\F$ in $\T^2$ in Section~\ref{sec::hit:reduc}.

\begin{prop}\label{prop::large k small a}
No $\F \in \T^3$ with $\logaF = \omega(\log k)$ is independent in $K_p(n,k)$ except if $a_\F = 1$ w.h.p., where $p = 1-4^{-(1-\ep)}$ and $n=2k+1$.
\end{prop}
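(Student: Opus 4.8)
The plan is to execute a union bound over all $\F \in \T^3$ with $a_\F = a$, $x_\F = x$, and $\loga = \omega(\log k)$, $a \geq 2$, splitting into two regimes according to how large $a$ is. In both regimes the number of candidates $\F$ is controlled by counting the possible $\A_\F$ (via Lemma~\ref{lem::2linked}, the 2-linkedness bound, which gives at most $\binom{2k}{k}(ek^2)^a = \exp(O(a\log k))$ choices) together with the possible $\B_\F$. Since $\F \in \T^2$, we have $\B_\F \subseteq N(\A_\F)$, and $|N(\A_\F) \cap \K_x| \leq \binom{n-k}{k}a = \binom{k+1}{k}a = (k+1)a$, so the number of choices for $\B_\F$ given $\A_\F$ is at most $\binom{(k+1)a}{a} = \exp(O(a\log k))$. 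Hence the total number of $\F \in \T^3_x(a)$ is $\exp(O(a\log k))$. The probability a fixed such $\F$ is independent in $K_p(n,k)$ is $(1-p)^{e(\F)} \leq \exp(-pe(\F)) \leq \exp(-\tfrac{1}{2}e(\F))$ for $p = 1-4^{-(1-\ep)}$ close to $3/4$.

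Now for the main regime, where $a$ is not too small, we apply the lower bound $e(\F) \geq \theta p_0^{-1} a \loga$ from Theorem~\ref{thm::eF} (Equation~\eqref{eq::eF:DK}); since $p_0 = 3/4$ is a constant for $n=2k+1$, this reads $e(\F) \geq \Omega(a \loga) = \omega(a\log k)$ by the hypothesis $\loga = \omega(\log k)$. Thus the probability that some $\F \in \T^3_x(a)$ is independent is at most $\exp(O(a\log k) - \Omega(a\loga)) = \exp(-\omega(a\log k)) \le \exp(-\omega(a)) = \exp(-\omega(1))$ since $a \geq 2$; summing over all $a$ in this regime and over all $x$ still gives $o(1)$ because the bound decays geometrically (and there are only $O(n) = O(k)$ choices of $x$ and at most $\binom{2k}{k}$ choices of $a$, swamped by the $\exp(-\omega(a\log k))$ factor). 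For the complementary regime, where $a$ is so small that $\loga$ is comparable to $\log k$ and hence Theorem~\ref{thm::eF} does not beat the entropy term — note the hypothesis says $\loga = \omega(\log k)$, which forces $a = \binom{n-1}{k}^{1-o(1)}$, so actually there is \emph{no} small-$a$ regime to worry about: the hypothesis $\loga = \omega(\log k)$ already rules out $a$ being subpolynomial in $\binom{n-1}{k} = \binom{2k}{k}$. Let me reconsider: $\loga = \omega(\log k)$ means $\log\binom{2k}{k} - \log a = \omega(\log k)$, i.e. $a \le \binom{2k}{k} k^{-\omega(1)}$, which is a ceiling on $a$, not a floor. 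So $a$ can be as small as $2$. For the genuinely small $a$ (say $a \leq \log k$), the sentence in the proposition about using ``a trivial bound on $e(\F)$ instead of Theorem~\ref{thm::eF}'' is the relevant tool: for 2-linked $\F$ with small $a$, one argues directly that $e(\A_\F, \bar\B_\F)$ (or $e(\A_\F)$) is at least, say, $\Omega(k)$ per vertex of $\A_\F$ after the first, using that being 2-linked and $\B_\F \subseteq N(\A_\F)$ forces many disjoint pairs; this yields $e(\F) = \Omega(a k) - O(\text{something})$, and since $\log k = o(k)$ the entropy term $O(a\log k)$ is still beaten.

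Concretely, here is how I would organize the trivial bound for small $a$: fix $\F \in \T^3_x(a)$ with $2 \le a$; since $\A_\F$ is 2-linked with respect to $x$, order its vertices $A_1, \dots, A_a$ so that each $A_i$ ($i \geq 2$) is 2-linked to some earlier $A_j$, meaning $|A_i \cup A_j| \leq n-k = k+1$, i.e. $|A_i \cap A_j| \geq k-1$. Each vertex of $\A_\F$ has degree exactly $\binom{n-k}{k} = k+1$ into $\K_x$, of which at most $|\B_\F| = a$ land in $\B_\F$; so each $A_i$ has at least $k+1-a$ neighbors in $\bar\B_\F$, contributing to $e(\A_\F, \bar\B_\F) \subseteq E(\F)$. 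Hence $e(\F) \geq a(k+1-a) \geq a(k+1-\log k) = \Omega(ak)$ when $a \leq \log k$. Then $\exp(O(a\log k) - \tfrac12\Omega(ak)) = \exp(-\Omega(ak))$, and summing over $2 \le a \le \log k$ and over $x$ gives $o(1)$. Combining the two regimes — which together cover all $a \geq 2$ with $\loga = \omega(\log k)$ — finishes the proof. The main obstacle I anticipate is bookkeeping the threshold between the two regimes cleanly so that in the ``large $a$'' regime Theorem~\ref{thm::eF} genuinely dominates the $\exp(O(a\log k))$ counting bound; this requires $\loga$ to be not just $\omega(\log k)$ but large enough that $\theta p_0^{-1}\loga$ exceeds the implied constant in $O(\log k)$, which holds eventually since $\loga/\log k \to \infty$ — so for $a$ above some $k$-dependent cutoff Theorem~\ref{thm::eF} works, and below it the trivial $\Omega(ak)$ bound works, as long as that cutoff is itself $\leq k^{O(1)} = o(\exp(\text{anything}))$, which it is. A secondary minor point is making sure the $a_\F = 1$ exclusion is exactly what is needed: near-stars ($a=1$) genuinely can be independent at this $p$ (that is the content of Observation~\ref{obs::thresh}'s companion), so the proposition correctly excludes them, and the argument above uses $a \geq 2$ only through $k+1-a \geq k-1 > 0$ and the geometric summability, both fine.
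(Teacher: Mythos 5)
Your proposal matches the paper's approach closely — count $\A_\F$ via Lemma~\ref{lem::2linked}, count $\B_\F \subseteq N(\A_\F)$, then split into a ``small $a$'' regime using a trivial bound on $e(\F)$ and a ``large $a$'' regime using Theorem~\ref{thm::eF} — but there is a genuine technical gap that propagates through the whole argument: you drop the $\binom{2k}{k}$ prefactor from Lemma~\ref{lem::2linked}. The lemma gives at most $\binom{2k}{k}(ek^2)^a$ choices for $\A_\F$, and your claim that this equals $\exp(O(a\log k))$ is false unless $a = \Omega(k/\log k)$, since $\binom{2k}{k} = 4^{k(1+o(1))}$ is a fixed additive $\Theta(k)$ in the exponent regardless of $a$. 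That factor is not a harmless technicality; it is precisely why the proposition excludes $a_\F = 1$ and why the constant $3/4$ appears in the threshold. The correct bookkeeping is: for small $a$ (the paper uses $a \leq k/8$, not $a \leq \log k$), you have $e(\F) \geq a\bigl(\binom{n-k-1}{k-1} - a\bigr) \geq \frac{7}{8}ak$, and you must keep $(1-p)^{e(\F)} = 4^{-(1-\ep)e(\F)}$ in base $4$ so that $\binom{2k}{k}(1-p)^{e(\F)} \leq 4^{k - (1-\ep)\frac{7}{8}ak}$, which is $4^{-\Omega(ak)}$ exactly because $a \geq 2$ makes $(1-\ep)\frac{7}{8}a > 1$. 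Your weakening to $(1-p)^{e(\F)} \leq \exp(-\tfrac{1}{2}e(\F))$ destroys this: for $a=2$ it gives roughly $e^{-k}$, which does not cancel $\binom{2k}{k} \approx e^{1.39k}$. Second, your cutoff $a \leq \log k$ for the trivial-bound regime leaves a hole: for, say, $a = \sqrt{k}$, Theorem~\ref{thm::eF} gives only $e(\F) = \Omega(a\logaF) = \Omega(\sqrt{k}\cdot\omega(\log k))$, which can be $o(k)$, so $(1-p)^{e(\F)}$ fails to cancel $\binom{2k}{k}$. The paper's cutoff at $a = k/8$ is not arbitrary — it is chosen so that for $a \geq k/8$, $\log\binom{2k}{k} = O(k) = O(a) = o(a\logaF)$, absorbing the prefactor into the $\eoaloga$ term, while for $a \leq k/8$ the trivial bound $e(\F) \geq \frac{7}{8}ak$ beats $\binom{2k}{k}$ directly. (A third, minor, slip: the degree of $A \in \binom{[n]\setminus\{x\}}{k}$ into $\K_x$ is $\binom{n-k-1}{k-1} = k$, not $\binom{n-k}{k} = k+1$; you have the two bipartite degrees swapped, though this does not affect the order of magnitude.)
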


\begin{proof}
By Lemma~\ref{lem::2linked}, the number of 2-linked $\A_\F$ with $\F \in \T^3_x(a)$ is at most
\[ \binom{2k}{k} \exp\left( O(a\log k) \right) = \binom{2k}{k} \exp\left( o\left( a \loga \right) \right) ,\]
since $\loga = \omega(\log k)$. The number of choices for $\B \subseteq N(\A)$, given $\A$, is at most
\[ \binom{a\binom{n-k-1}{k-1}}{a} = \binom{ak}{a} \leq \exp\left( O\left( a \log k \right) \right) = \exp\left( o\left( a \loga \right) \right) .\]
Note that $e(\F) \geq a(\binom{n-k-1}{k-1} - a) \geq \frac{7}{8} a k$ for $a \leq k/8$, so the probability some $\F \in \T^3_x(a)$ with $a \leq k/8$ is independent is at most
\[ \binom{2k}{k} \eoaloga (1-p)^{\frac{7}{8}ak} \leq 4^k \eoaloga 4^{-(1-\ep)\frac{7}{8}ak} \leq 4^{-ak/2} ,\]
using $a\geq 2$. For $a\geq k/8$, we use \eqref{eq::eF:DK}, so the probability that some $\F$ is independent is at most
\[ \exp\left( \log\binom{2k}{k} + o\left(a\loga\right) - \theta a \loga \right) = \exp\left( -\frac{\theta}{2} a \loga \right) .\]
In both cases, we can take a union bound over all such $a$ and $x$ to finish the proof.
\end{proof}

\subsection{Reduction to 2-linked components}\label{sec::hit:reduc}

Now we extend Proposition~\ref{prop::large k small a} from $\T^3$ to $\T^2$ by reducing $\F \in \T^2$ to its components in $\T^3$. Combined with Proposition~\ref{prop::large k large a}, this gives a characterization of the independent sets of $K_{q_1}(n,k)$, of which Theorems~\ref{thm::hitting:indep} and~\ref{thm::hitting:EKR} are a quick corollary.

\begin{prop}\label{prop::hitreduc}
Let $H = K_{q_1}(n,k)$. W.h.p., the only $\F \in \T$ which are independent in $H$ have the following form: $\F = \{A_1,\dots,A_m\} \cup (\K_x \setminus \{B_1,\dots,B_m\})$ where $E_H(\F,\K_x) = \{A_1B_1, \dots, A_m B_m\}$ and $m < \frac{1}{4} \binom{n-2}{k-1}$. 
\end{prop}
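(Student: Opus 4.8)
The plan is to show that, with high probability, every independent $\F \in \T$ in $H = K_q(n,k)$ decomposes according to its 2-linked components with respect to $x_\F$, and then to verify that each component contributes exactly one edge between $\A_\F$ and $\bar\B_\F$. First I would dispose of the easy cases using the results already in hand: by Lemma~\ref{lem::bigeF} we may assume $\F \in \T^1$, and by Proposition~\ref{prop::large k large a} (applicable since $n = 2k+1$ means $n-2k = o(n)$) we may assume $\loga = \omega(\log k)$, i.e. $a_\F$ is not too large; in particular $a_\F < \frac13\binom{n-2}{k-1}$, so Proposition~\ref{prop::T2reduc} produces $\F' \in \T^2_x(a)$ with $\A_{\F'} = \A_\F$ and $E(\F') \subseteq E(\F)$. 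Thus it suffices to understand the independent $\F \in \T^2$, and it is enough to prove that w.h.p.\ no such $\F$ is independent in $H$ \emph{unless} its bipartite graph between $\A_\F$ and $\bar\B_\F$ is a perfect matching $\{A_1B_1,\dots,A_mB_m\}$ — equivalently, unless each 2-linked component of $\A_\F$ with respect to $x$ is a single vertex adjacent (in $H$) to exactly one vertex of $\K_x \setminus \F$, which gives the stated structure with $m = a_\F < \frac14\binom{n-2}{k-1}$.

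The core step is the reduction to 2-linked components. Given $\F \in \T^2_x(a)$, let $\A^{(1)},\dots,\A^{(t)}$ be the 2-linked components of $\A_\F$ with respect to $x$, with sizes $a_1,\dots,a_t$ summing to $a$. The key structural observation is that if $A \in \A^{(i)}$ and $A' \in \A^{(j)}$ with $i \ne j$, then $A$ and $A'$ have no common neighbor in $\K_x$; hence for each $i$ we may set $\B^{(i)} = \B_\F \cap N(\A^{(i)})$, and since $\B_\F \subseteq N(\A_\F)$ (the defining property of $\T^2$) these $\B^{(i)}$ partition $\B_\F$, with $|\B^{(i)}| =: b_i \le \sum_{A\in\A^{(i)}} d(A,\K_x) \le a_i k$ and, crucially, $b_i \ge 1$ whenever $a_i \ge 1$ by Lemma~\ref{lem::iso} applied within the component. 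Now form $\F_i = \A^{(i)} \cup (\K_x \setminus \B^{(i)})$; one checks (exactly as in Proposition~\ref{prop::T2reduc}, using Observation~\ref{obs::farstar} and $a < \frac14\binom{n-2}{k-1}$) that $x_{\F_i} = x$, that $\F_i \in \T^3_x(a_i)$, and that the edge sets $E(\A^{(i)}, \bar\B^{(i)})$ are pairwise disjoint and contained in $E(\F)$, so $e(\F) \ge \sum_i e(\F_i)$ and in particular $\F$ independent in $H$ forces every $\F_i$ independent in $H$. By Proposition~\ref{prop::large k small a}, w.h.p.\ no $\F_i \in \T^3$ with $\log\frac{\binom{n-1}{k}}{a_i} = \omega(\log k)$ is independent in $K_p(n,k)$ unless $a_i = 1$; combined with the binomial-model comparison of Section~\ref{sec::hit:binom} (which gives, w.h.p., that $K_p(n,k)$ is a subgraph of $H = K_{q}(n,k)$, since $p < q_1 \le q$ w.h.p.), this rules out all components with $a_i$ in the relevant range being independent in $H$ unless $a_i = 1$. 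One then separately handles components with $a_i$ large — but for such components $\log\frac{\binom{n-1}{k}}{a_i}$ is at most $O(\log k)$ only when $a_i$ is exponentially large in $k$, which is impossible here since $a \le a_\F < \frac14\binom{n-2}{k-1}$ and $\loga = \omega(\log k)$ forces each $a_i \le a$ to also satisfy $\log\frac{\binom{n-1}{k}}{a_i} = \omega(\log k)$; so in fact \emph{every} component has $a_i = 1$.

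It remains to see that each singleton component $\{A_i\}$ has exactly one $H$-edge to $\K_x \setminus \F$. Since $b_i \ge 1$ there is at least one such edge; I would argue there is at most one by absorbing the possibility of two or more into the union bound: define $\T^3$-style families where some component $\{A_i\}$ has $d_H(A_i, \B_\F) \ge 2$, and note this event forces at least two $H$-edges incident to $A_i$ beyond what is needed, costing an extra factor $(1-q)^{\ge 1}$ per such component while the number of choices only grows by a polynomial-in-$k$ factor (choosing the second neighbor in $\K_x$, of which there are at most $k$); since $q \ge p = 1 - 4^{-(1-\ep)}$ is bounded away from $0$, this is absorbed just as in Proposition~\ref{prop::large k small a}. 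Putting the components back together yields $\F = \{A_1,\dots,A_m\} \cup (\K_x \setminus \{B_1,\dots,B_m\})$ with $E_H(\F,\K_x) = \{A_1B_1,\dots,A_mB_m\}$ and $m = a < \frac14\binom{n-2}{k-1}$, as claimed. The main obstacle I anticipate is making the component decomposition genuinely edge-disjoint and checking $x_{\F_i} = x_\F$ for each piece — i.e.\ that passing to a single component does not move the optimal star — which is where the bound $a < \frac14\binom{n-2}{k-1}$ (rather than the $\frac13$ of Proposition~\ref{prop::T2reduc}) is spent, via Observation~\ref{obs::farstar} with the extra slack needed to accommodate deleting $b_i \le a$ vertices of $\K_x$.
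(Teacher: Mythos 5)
Your overall plan matches the paper's, but there are two concrete gaps in the component step, one of which the paper's proof carefully avoids with an averaging trick you did not use.

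First, you set $\F_i = \A^{(i)} \cup (\K_x \setminus \B^{(i)})$ where $\B^{(i)} = \B_\F \cap N(\A^{(i)})$ and assert $\F_i \in \T^3_x(a_i)$, then apply Proposition~\ref{prop::large k small a}. But membership in $\T$ requires $|\F_i| = \binom{n-1}{k-1}$, which forces $|\A^{(i)}| = |\B^{(i)}|$; you never establish this, and in general it can fail in either direction. If $|\B^{(i)}| > |\A^{(i)}|$ you cannot repair it by shrinking $\B^{(i)}$, because any $\B'' \subsetneq \B^{(i)}$ creates $E(\A^{(i)}, \B^{(i)} \setminus \B'')$ as new ``spanned'' edges not present in $E(\F)$, and if $|\B^{(i)}| < |\A^{(i)}|$ you must pad $\B^{(i)}$ out to size $|\A^{(i)}|$ inside $N(\A^{(i)}) \cap \K_x$. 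The paper's fix is exactly this padding (possible by Lemma~\ref{lem::iso}) together with an averaging step: it proves the claim only for components with $|\A^{(i)}| \geq |\B^{(i)}|$, obtaining $|\A^{(i)}| = |\B^{(i)}| = 1$ for those; then the identity $\sum_i |\B^{(i)}| = |\B_\F| = |\A_\F| = \sum_i |\A^{(i)}|$ (here is where $\T^2$, i.e.\ $\B_\F \subseteq N(\A_\F)$, is used) rules out any component with $|\A^{(i)}| < |\B^{(i)}|$, since its existence would force some other component with $|\A^{(j)}| > |\B^{(j)}|$, contradicting the just-proved claim. This averaging is the essential missing idea; without it you are applying Proposition~\ref{prop::large k small a} to objects that need not be in $\T$.

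Second, your claim that $b_i \geq 1$ ``by Lemma~\ref{lem::iso} applied within the component'' is not correct. Lemma~\ref{lem::iso} lower-bounds $|N(\A^{(i)}) \cap \K_x|$, which says nothing about $|N(\A^{(i)}) \cap \B_\F|$; the latter can certainly be $0$. The correct reason $b_i \geq 1$ (once $|\A^{(i)}| = 1$ is known) is that $b_i = 0$ would make $\A^{(i)} \cup \K_x$ an independent superstar in $H$, which is forbidden since $q \geq q_1$ by definition. The paper states exactly this. Relatedly, your final paragraph about ``absorbing the possibility of two or more $H$-edges into a union bound'' is unnecessary: once $|\B^{(i)}| = 1$, the only $K(n,k)$-neighbor of $A_i$ inside $\B_\F$ is $B_i$, the neighbors of $A_i$ in $\bar\B_\F \subseteq \F$ carry no $H$-edges because $\F$ is independent, and the single potential $H$-edge $A_iB_i$ must be present to avoid an independent superstar — so the structure $E_H(\F,\K_x) = \{A_1B_1,\dots,A_mB_m\}$ is forced deterministically given the component analysis, without any extra probabilistic argument.
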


\begin{proof}
Let $H = K_{q_1}(n,k)$. By Observation~\ref{obs::thresh}, $q_1>p$ w.h.p., so assume this is the case. Furthermore, assume that $H$ satisfies the conclusions of Lemma~\ref{lem::bigeF} and Propositions~\ref{prop::large k large a} and~\ref{prop::large k small a}. Let $\F \in \T$ be independent in $H$, so by Lemma~\ref{lem::bigeF}, $\F \in \T^1_x(a)$ for some $x$. By Proposition~\ref{prop::large k large a}, $a < \frac{1}{3}\binom{n-2}{k-1}$, so let $\F' \in \T^2_x(a)$ be a set family given by applying Proposition~\ref{prop::T2reduc} to $\F$.

Let $\A_1, \dots, \A_m$ be the connected components of $\A_\F$ in $J(n,k)$, and let $\B_i = \B_{\F'} \cap N(\A_i)$ for $i \in [m]$. Let $i \in [m]$ be such that $|\A_i| \geq |\B_i|$. We claim that, with the assumptions on $H$, $|\A_i| = |\B_i| = 1$. Since there are no edges in $J_x(n,k)$ between distinct $\A_i$, the $N(\A_i) \cap \K_x$ and thus the $\B_i$ are pairwise disjoint. Because $\F' \in \T^2$, we have
\begin{equation}\label{eq::ABsum}
\sum_{i=1}^m |\B_i| = |\B_{\F'}| = |\A_{\F'}| = \sum_{i=1}^m |\A_i| .
\end{equation}
With our claim that $|\A_i| \geq |\B_i|$ implies $|\A_i| = |\B_i| = 1$, this gives that $|\A_i| = |\B_i| = 1$ for all $i \in [m]$, Let $\A_i = \{A_i\}$ and $\B_i = \{B_i\}$. Since the $N(\A_i)$ are disjoint, we have that $\F' = \{A_1,\dots,A_m\} \cup (\K_x \setminus \{B_1,\dots,B_m\})$ where $E_H(\F',\K_x) = \{A_1B_1, \dots, A_m B_m\}$. By construction, $\A_\F = \A_{\F'}$, so unless $\B_{\F} = \B_{\F'}$, $\F$ will contain some edge $A_i B_i$. Thus $\F = \F'$, and $\F$ has the desired structure.

All that remains is to prove the claim. Since $|\A_i| \leq a < \binom{n-2}{k-1}$, by Lemma~\ref{lem::iso}, $|N(\A_i) \cap \K_x| \geq |\A_i|$, so there exists $\B'$ of size $|\A_i|$ such that $N(\A_i) \cap \B_{\F'} \subseteq \B' \subseteq N(\A_i) \cap \K_x$. Let $\F'' = \A_i \cup (\K_x \setminus \B')$, and observe that
\[ E(\F'') = E(\A_i) \cup E(\A_i,\bar{\B'}) \subseteq E(\A_\F) \cup E(\A_i, \bar{N(\A_i) \cap \B_{\F'}}) \subseteq E(\F') .\]
By Observation~\ref{obs::farstar}, for any $y \in [n]$ with $y \neq x$,
\[ |\F'' \setminus \K_y| \geq |\F' \setminus \K_y| - |\B'| - |\A_\F| \geq \binom{n-2}{k-1} - 3a > a \geq |\A_i| = |\F'' \setminus \K_x| ,\]
which follows since $a < \frac{1}{4} \binom{n-2}{k-1}$ by Proposition~\ref{prop::large k large a}. Thus $x_{\F''} = x$ and so $\A_{\F''} = \A_i$, $\B_{\F''} = \B'$, and $\F'' \in \T^3_x(|\A_i|)$. Since $H$ satisfies the conclusion of Proposition~\ref{prop::large k small a}, we must have $a_{\F''} = 1$, so $|\A_i| = 1$. If $|\B_i| = 0$, then $\A_i \cup \K_x$ is an independent superstar in $H$, which is forbidden. Thus $|\A_i| = |\B_i| = 1$.
\end{proof}

\subsection{Proof of Theorems~\ref{thm::hitting:indep} and~\ref{thm::hitting:EKR}}\label{sec::hit:proof}

Compiling the results of this section, we first prove the simpler Theorem~\ref{thm::hitting:EKR} and then Theorem~\ref{thm::hitting:indep}.

\begin{proof}[Proof of Theorem~\ref{thm::hitting:EKR}]
Let $H = K_{q_2}(n,k)$. By Proposition~\ref{prop::hitreduc}, since $q_1<q_2$, w.h.p.\ the only set families in $\T$ which are independent in $H$ are $\{A_1,\dots,A_m\} \cup (\K_x \setminus \{B_1,\dots,B_m\})$ where $E_H(\F,\K_x) = \{A_1B_1, \dots, A_m B_m\}$. But $\{A_1\} \cup (\K_x \setminus \{B_1\})$ is a near-star, which is not independent in $H$. Thus no $\F \in \T$ is independent in $H$ w.h.p., as desired. 
\end{proof}

\begin{proof}[Proof of Theorem~\ref{thm::hitting:indep}]
Let $H = K_{q_1}(n,k)$. Consider a set family $\F \subseteq V(H)$ of size $\binom{n-1}{k-1}+1$ which is independent in $H$. Let $x$ minimize $|\K_x \setminus \F|$, and let $B \in \K_x \cap \F$. By Proposition~\ref{prop::hitreduc}, w.h.p.\ $\F \setminus \{B\} = \{A_1, \dots, A_m\} \cup (\K_y \setminus \{B_1, \dots, B_m\})$, for some $y \in [n]$, where $E_H(\F\setminus\{B\},\K_y) = \{A_1 B_1, \dots, A_m B_m\}$. This means that $|\K_y \setminus \F| < \frac{1}{4} \binom{n-2}{k-1}$, so by Observation~\ref{obs::farstar}, $y=x$. Thus $B = B_i$ for some $i$, so $\F$ contains the edge $A_i B_i$, a contradiction.
\end{proof}

\paragraph{Remark.} It is straightforward to generalize the proofs in this section to all $k$ with $n-2k = o(n)$. The main difference is that the degrees in the auxiliary graph $J_x(n,k)$ increase, but not so dramatically when $n-2k = o(n)$. However, for say $n=4k$, the graph $J_x(n,k)$ becomes complete, and so Lemma~\ref{lem::treebuilding} no longer provides an efficient way to count the number of $\A_\F$. 

One may also obtain hitting time results for $k = o(n)$ via a different approach, namely a careful modification of the argument in~\cite{DT}, which is a slight variation on the argument in~\cite{BNR}. For $\F$ with small $a_\F$ one counts maximal independent sets in $K(n,k)$, while $\F$ with large $a_\F$ are already handled by Lemma~\ref{lem::bigeF} and~\eqref{eq::eF:DT}, which imply
\[ \loga > \frac{\theta}{5} \cdot \frac{n-2k}{n} \log\binom{n-1}{k} \sim k\log\frac{n}{k} .\]
When $k$ is very small, further methods from~\cite{BNR} must be used. One must also be careful with the choice of $\ep$, as a constant $\ep$ does not work with these techniques. Unfortunately, these techniques break down when $k = \Omega(n)$ and in that case only handle $\F$ with $a_\F = o(n)$.

\section{Sharp threshold for $n\leq Ck$}\label{sec::largen}

As Theorem~\ref{thm::main} is proven for $n=2k+1$ in Section~\ref{sec::hit}, and Das and Tran~\cite{DT} proved Theorem~\ref{thm::main} for $k \leq n/C$ for some constant $C$, we assume for this section that $2k+1 < n < Ck$, fixing this constant $C$ sufficiently large. Additionally fix a constant $\ep>0$ sufficiently small, and let $p = (1+\ep)p_0$.

In Section~\ref{sec::largen:mindeg}, we prove an important assumption about $K_{(1+\ep)p_0}(n,k)$ that we are unable to make about the hitting time version $K_{\tau_{\text{near}}}(n,k)$. Given this, in Section~\ref{sec::largen:Alow} we show that for every $\F$ in consideration, $|\A_\F^{<\delta}|$ is small, and so, with Lemma~\ref{lem::Ahigh}, the number of choices for $\A_\F$ is small. When $n-2k$ is small, this allows us to quickly finish in Section~\ref{sec::largen:largek}. Otherwise, this allows us to eliminate those $\F$ with $e(\A_\F)$ large from consideration in Section~\ref{sec::largen:eA}. Finally, in Section~\ref{sec::largen:B} we deal with counting $\B^{<\delta}$, and we wrap up the proof of Theorem~\ref{thm::main} in Section~\ref{sec::largen:proof}.

While our proof likely works for smaller $\ep$, giving tighter bounds on the `width of the window' for this threshold, we do not optimize our choice of $\ep$ here, mostly because we think the stronger hitting time version of Conjecture~\ref{conj::hitting} ought to be true.

\subsection{Minimum degree assumption}\label{sec::largen:mindeg}

First we show that not only are near-stars not independent in $K_p(n,k)$ w.h.p., but every superstar contains at least $\delta k$ edges in $K_p(n,k)$ for some constant $\delta>0$.
\begin{lemma} \label{lem::minDeg}
Let $H = K_p(n,k)$, where $p = (1+\ep)p_0$. There exists a constant $\delta = \delta(\ep) > 0$ such that, w.h.p., for every $x \in [n]$ and $A \in \binom{[n]\setminus\{x\}}{k}$, we have $d_H(A, \K_x) \geq \delta k$.
\end{lemma}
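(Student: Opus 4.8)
The plan is to run a first-moment argument over all pairs $(x,A)$ with $x\in[n]$ and $A\in\binom{[n]\setminus\{x\}}{k}$, where the ``bad event'' is that the disjointness-degree of $A$ into $\K_x$ is realized by few edges in $H$. Recall that the number of $k$-sets in $\K_x$ disjoint from $A$ is exactly $\binom{n-k-1}{k-1}$, so $d_{K(n,k)}(A,\K_x)=\binom{n-k-1}{k-1}=:D$. Thus $d_H(A,\K_x)$ is distributed as $\mathrm{Bin}(D,p)$, with mean $pD=(1+\ep)p_0 D$. First I would treat the main regime $n\le 2k+k/6$ (equivalently $p_0$ defined by the logarithmic formula), where $p_0 D = (1+o(1))\log\!\big(n\binom{n-1}{k}\big)$ is of order $\log\binom{n-1}{k}=\Theta(k\log\frac nk)$, which is $\omega(1)$ and in fact $\ge \delta_0 k$ for a suitable constant; here I would apply the lower-tail Chernoff bound (Lemma~\ref{lem::cher}) with $\alpha$ chosen so that $(1-\alpha+\alpha\log\alpha)$ times $pD$ comfortably exceeds $\log\!\big(n\binom{n-1}{k}\big)$, say $\alpha = \tfrac{1}{1+\ep}\cdot\tfrac{\delta k}{p_0 D}$ for a small constant $\delta$, so that $\alpha p D = \delta k$.

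Concretely, set $\delta>0$ small (depending on $\ep$) and let $E_{x,A}$ be the event $d_H(A,\K_x)<\delta k$. By the Chernoff lower tail,
\[
\P(E_{x,A}) \;\le\; \exp\!\big(-(1-2\sqrt{\alpha})\,pD\big)
\]
with $\alpha = \delta k/(pD)$; since $pD \ge (1+\ep)\log\!\big(n\binom{n-1}{k}\big)$ and $k = o\big(\log\binom{n-1}{k}\big)$ is false in general, I would instead keep $pD$ itself in the exponent and note $\alpha\to 0$, so $(1-2\sqrt\alpha)pD \ge (1+\ep/2)\log\!\big(n\binom{n-1}{k}\big)$ for $n$ large. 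A union bound over the at most $n\binom{n-1}{k}$ choices of $(x,A)$ then gives
\[
\sum_{x,A}\P(E_{x,A}) \;\le\; n\binom{n-1}{k}\exp\!\Big(-(1+\ep/2)\log\!\big(n\tbinom{n-1}{k}\big)\Big) \;=\; \big(n\tbinom{n-1}{k}\big)^{-\ep/2} \;=\; o(1).
\]
For the remaining regime $n>2k+k/6$, here $p_0 = \log\!\big(n\binom{n-1}{k}\big)/D$ still holds by \eqref{eq::p0}, so the same computation applies verbatim with the same choice of $\delta$; in fact in this range $D$ is comparatively small but $p_0 D = \log\!\big(n\binom{n-1}{k}\big)$ is unchanged, and since we only need $d_H(A,\K_x)\ge \delta k$ with $\delta k \le p_0 D$ (which holds because $p_0 \ge k\log 2/D$ as derived in the proof of Theorem~\ref{thm::eF}), the argument goes through. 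Note the special case $n=2k+1$ is excluded from this section, but even there $D=k$ and $p_0=3/4$ give $p_0 D = 3k/4 \gg \delta k$, so nothing breaks.

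The only mild subtlety---and the step I would be most careful about---is making sure the chosen constant $\delta$ satisfies $\delta k \le (1+\ep/2)^{-1}\,p_0 D$ \emph{uniformly} over the whole range $2k+1<n<Ck$, so that $\alpha\le 1$ and the lower-tail Chernoff bound is applicable with room to spare in the exponent; this amounts to checking $p_0 D = \log\!\big(n\binom{n-1}{k}\big) \ge k\log 2$, which is immediate since $\binom{n-1}{k}\ge 2^{k}$ for $n-1\ge 2k$. Everything else is a routine first-moment union bound, and since $n\binom{n-1}{k}$ is only quaspolynomially large while the tail probability is $\big(n\binom{n-1}{k}\big)^{-\Omega(1)}$, the union bound closes comfortably.
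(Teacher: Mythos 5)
Your proposal is correct and follows essentially the same route as the paper: compute the mean $p\binom{n-k-1}{k-1}=(1+\ep)\log\big(n\binom{n-1}{k}\big)=\Theta(k)$, apply the Chernoff lower tail with $\delta$ chosen small enough that the ratio $\alpha=\delta k/(pD)$ is a sufficiently small constant, and union bound over the $n\binom{n-1}{k}$ pairs. One minor imprecision: you write ``note $\alpha\to0$,'' but $\alpha$ is a fixed small constant (both $\delta k$ and $pD$ are $\Theta(k)$), not tending to $0$; this does not affect the argument since a small constant suffices, and the paper just fixes $\alpha=\ep^2/25$ by defining $\delta$ explicitly in terms of $\ep$ and $\log\big(n\binom{n-1}{k}\big)/k$.
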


\begin{proof}
Let $x \in [n]$ and $S \in \binom{[n]\setminus\{x\}}{k}$. Let
\[ \delta = \frac{\ep^2(1+\ep)\log\left(n\binom{n-1}{k}\right)}{25k} ,\]
and note that $\delta$ is constant, since $\log\left(n\binom{n-1}{k}\right) = \Theta(k)$. Since $d_H(A,\K_x)$ is binomially distributed with mean
\[ p \binom{n-k-1}{k-1} = (1+\ep) \log\left(n\binom{n-1}{k}\right) = 25 \delta k/\ep^2 ,\]
by Lemma~\ref{lem::cher}, the probability that $d_H(A,\K_x)$ is less than $\delta k$ is at most
\[ \exp\left( -\left( 1-2\sqrt{\ep^2/25} \right) (1+\ep)\log\left(n\binom{n-1}{k}\right) \right) \leq \left(n\binom{n-1}{k}\right)^{-1-\ep/2} .\]
Taking the union bound over all $n\binom{n-1}{k}$ choices for $x$ and $A$, the lemma follows.
\end{proof}

We fix the $\delta$ given by Lemma~\ref{lem::minDeg} for the remainder of this section. The specific value of $\delta$ is not important; all that matters is that $\delta$ remains constant as $n \to \infty$.

\paragraph{Remark.} Lemma~\ref{lem::minDeg} already implies that $\F$ with $a_\F < \delta k$ are not independent in $K_p(n,k)$ w.h.p. This is because for such $\F$, $\B_\F$ is too small to absorb all the edges from a single vertex of $\A_\F$.

\subsection{Few vertices of low degree in $\A_\F$}\label{sec::largen:Alow}

Vertices of $\A_\F$ with low degree to $\B_\F$ in $K(n,k)$ will likely have relatively low degree to $\B_\F$ in $K_p(n,k)$. But if $K_p(n,k)$ satisfies the minimum degree assumption, then there must be edges between $\A_\F$ and $\bar\B_\F$, so $\F$ is not independent. We formalize this with the following lemma, which upper bounds the number of vertices of $\A_\F$ with low degree to $\B_\F$ and with Lemma~\ref{lem::Ahigh} upper bounds the total number of choices for $\A_\F$. Let
\[ \T^4 = \left\{\F \in \T^1 : \left|\A_\F^{<1/\sqrt{k}}\right| \leq \frac{a_\F}{\log\log k} \right\} .\]

\begin{prop}\label{prop:fewlowdeg}
No $\F \in \T^1 \setminus \T^4$ is independent in $K_p(n,k)$ w.h.p., where $p = (1+\ep)p_0$.
\end{prop}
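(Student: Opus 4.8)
The plan is to follow the template established by Proposition~\ref{prop::large k large a} and Proposition~\ref{prop::large k small a}: bound the number of relevant $\A_\F$, bound the number of $\B_\F$ given $\A_\F$, and then union-bound against $(1-p)^{e(\F)}$ using Theorem~\ref{thm::eF}. The novelty here is that we must count $\A_\F$ for $\F \in \T^1 \setminus \T^4$, i.e.\ $\F$ with \emph{many} low-degree vertices, and show this count is so small that even after paying the $\binom{\binom{n-1}{k}}{a}$-type cost for $\B_\F$ we still beat $(1-p)^{e(\F)}$. The key leverage is the minimum degree assumption (Lemma~\ref{lem::minDeg}): if $\F$ is independent in $K_p(n,k)$, then every $A \in \A_\F$ has $d_{K_p}(A, \K_x) \geq \delta k$, and since $A$ has no $K_p$-edges into $\bar\B_\F$ (independence), \emph{all} of these $\geq \delta k$ surviving edges must land in $\B_\F$. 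So for each $A \in \A_\F^{<1/\sqrt{k}}$, we are retaining $\geq \delta k$ edges out of a set of $< \binom{n-k-1}{k-1}/\sqrt{k}$ available edges to $\B_\F$ --- a large-deviation event of probability roughly $\exp(-\Omega(k \log\sqrt{k})) = \exp(-\Omega(k\log k))$ per vertex, by the Chernoff upper tail with $\beta = \Omega(\sqrt k)$.

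First I would fix $x$ and $a$ and work inside $\T^1_x(a) \setminus \T^4_x(a)$, so $|\A_\F^{<1/\sqrt k}| > a/\log\log k =: t$. Select any $t$ vertices forming a subset $\A^{\mathrm{low}} \subseteq \A_\F^{<1/\sqrt k}$. The number of ways to choose the pair (positions of these $t$ vertices among all $k$-sets, together with their target sets $\B_\F$) is at most $\binom{\binom{n-1}{k}}{t}\binom{\binom{n-1}{k-1}}{a}$, but crucially we do \emph{not} need to separately reconstruct all of $\A_\F$: instead, revealing $\B_\F$ and the full edge set of $K_p(n,k)$ between $\binom{[n]\setminus\{x\}}{k}$ and $\K_x$, the condition ``$A$ has $\geq \delta k$ $K_p$-neighbors in $\B_\F$ and $0$ in $\bar\B_\F$'' is an event per candidate vertex $A$. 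Rather than counting $\A_\F$ directly, I would argue: condition on $\B_\F$ (there are $\le \binom{\binom{n-1}{k-1}}{a} = \eoaloga$ choices using Observation~\ref{obs::abound}), and then for each of the $t$ slots, the number of vertices $A \in \binom{[n]\setminus\{x\}}{k}$ with $d(A,\B_\F) < \binom{n-k-1}{k-1}/\sqrt k$ yet $d_{K_p}(A,\B_\F)\ge \delta k$ is, in expectation, at most $\binom{n-1}{k}$ times $\P(\mathrm{Bin}(\binom{n-k-1}{k-1}/\sqrt k, p) \ge \delta k)$. With $p \le 1$ and mean $\mu \le p\binom{n-k-1}{k-1}/\sqrt k = O(k/\sqrt k) = O(\sqrt k)$ (since $p_0\binom{n-k-1}{k-1} = \Theta(k)$ and $p=(1+\ep)p_0$), while we need $\ge \delta k$ successes, the upper tail of Lemma~\ref{lem::cher} with $\beta = \delta k/\mu = \Omega(\sqrt k)$ gives this probability at most $\exp(-\Omega(k \log k))$. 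Multiplying by $\binom{n-1}{k} = \exp(O(k\log(n/k))) = \exp(O(k))$ (as $n < Ck$), we see the expected number of such ``bad'' vertices is $\exp(-\Omega(k\log k))$, hence w.h.p.\ there are none, hence no such $\F$ is independent. (To make this a clean union bound rather than an expectation argument, I would first show w.h.p.\ \emph{no} vertex $A$ with $d(A,\B) < \binom{n-k-1}{k-1}/\sqrt k$ has $\ge \delta k$ $K_p$-edges into any fixed $a$-set $\B$; union-bound over $\binom{n-1}{k}$ choices of $A$, $\binom{\binom{n-1}{k-1}}{a}$ choices of $\B$, $n$ choices of $x$, and $n$ choices of $a$ --- the per-event probability $\exp(-\Omega(k\log k))$ dominates all these, since each of those counts is at most $\exp(O(a\cdot k)) \le \exp(O(\binom{n-1}{k-1} k)) $, which is still tiny compared to... wait, no.)

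Let me reconsider the bookkeeping, since that is the delicate point. The count $\binom{\binom{n-1}{k-1}}{a}$ can be as large as $\exp(\Theta(\binom{n-1}{k-1}))$ when $a = \Theta(\binom{n-1}{k-1})$, which would swamp $\exp(-\Omega(k\log k))$. The resolution is that we should not fix \emph{all} of $\B_\F$; we only need, for each of $t = a/\log\log k$ low-degree vertices $A_j$, to know $A_j$ and its $\ge\delta k$ retained $K_p$-neighbors in $\K_x$ (which, by independence, are exactly retained neighbors lying in $\B_\F$, so they certify membership constraints but we need not name all of $\B_\F$). Concretely: for $\F$ independent with $|\A_\F^{<1/\sqrt k}| > t$, we get $t$ vertices $A_j$ each having $\ge \delta k$ $K_p$-neighbors in $\K_x$; reconstruct $\A_\F^{<1/\sqrt k}$-witnesses by naming the $t$ sets $A_j$ ($\exp(O(t\log\binom{n-1}{k})) = \exp(O(t k)) $ choices) --- but this is $\exp(O(ak/\log\log k))$, still possibly larger than $e(\F) \asymp \frac{1}{p_0} a \loga \asymp ak$ if we are not careful, since $ak/\log\log k$ vs $ak$: the former is \emph{smaller} by a $\log\log k$ factor, good. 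Then, against each such configuration, the probability that \emph{all} $t$ of the named vertices have $\ge\delta k$ retained edges into the (at most $a \le \binom{n-1}{k-1}$-sized, but each of bounded degree $< \binom{n-k-1}{k-1}/\sqrt k$) neighborhood, with none retained into $\bar\B_\F$ --- just using the $\ge\delta k$-into-small-set part --- is at most $[\P(\mathrm{Bin}(\binom{n-k-1}{k-1}/\sqrt k, p)\ge\delta k)]^t = \exp(-\Omega(tk\log k))$. Comparing $\exp(O(tk))$ witnesses against $\exp(-\Omega(tk\log k))$ probability wins by a factor $\exp(-\Omega(tk\log k)) = \exp(-\omega(a \loga))$, and we additionally need to pay for reconstructing the \emph{rest} of $\A_\F$ and $\B_\F$ --- but we are \emph{not} proving $\F$ has small $e(\F)$-cost here, we are proving the named low-degree vertices alone force a contradiction with Lemma~\ref{lem::minDeg}, so no further reconstruction is needed: if w.h.p.\ no collection of $t$ vertices exists with each having $\ge\delta k$ retained edges into a common set $\B$ of size $a$ while avoiding $\bar\B$, then $\T^1\setminus\T^4$ contains no independent set. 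The main obstacle, and the step I would spend the most care on, is exactly this accounting: ensuring the ``number of certificates'' $\exp(O(tk))$ (or $\exp(o(a\loga))$ after absorbing $\log\binom{n-1}{k} = \Theta(k)$) genuinely stays below the tail probability $\exp(-\Omega(tk\log k))$ uniformly over all $a \le \frac{n-k}{n}\binom{n-1}{k-1}$, and handling the case of very small $a$ (where $\loga$ is close to $\log\binom{n-1}{k} = \Theta(k)$ and the ``$\log\log k$'' savings in the definition of $\T^4$ is what makes the inequality $o(a\loga) < \Omega(a k\log k)$ hold) versus large $a$ separately if needed. The rest is a routine Chernoff estimate plus the standard union bound over $x$ and $a$ that the paper says it omits.
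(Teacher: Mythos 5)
Your core idea --- combine the minimum degree assumption (Lemma~\ref{lem::minDeg}) with a per-vertex Chernoff upper tail for each $A\in\A_\F^{<1/\sqrt{k}}$ --- is exactly the paper's approach, and your per-vertex Chernoff estimate is correct: each such $A$ satisfies $d_{K_p}(A,\B_\F)\geq\delta k$ with probability at most $\exp(-\Omega(k\log k))\leq \binom{n-1}{k}^{-3\log\log k}$ (using $\log\binom{n-1}{k}=\Theta(k)$ for $n\leq Ck$). However, you then talk yourself out of the straightforward union bound for no good reason, and the detour you take does not close. Your worry that $\binom{\binom{n-1}{k-1}}{a}$ could be as large as $\exp(\Theta(\binom{n-1}{k-1}))$ when $a=\Theta(\binom{n-1}{k-1})$ is a red herring: you should be comparing it not to a \emph{single} per-vertex tail $\exp(-\Omega(k\log k))$, but to the product over the $t>a/\log\log k$ low-degree vertices. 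Because the edge sets $\{A\}\times\B_\F$ are disjoint across $A$, these events are independent, so the probability is at most $\bigl(\binom{n-1}{k}^{-3\log\log k}\bigr)^{a/\log\log k}=\binom{n-1}{k}^{-3a}$, which trivially beats $\binom{\binom{n-1}{k}}{a}\binom{\binom{n-1}{k-1}}{a}\leq\binom{n-1}{k}^{2a}$. That is the entire content of the paper's proof: union-bound over $(\A_\F,\B_\F)$ using the trivial count~\eqref{eq::trivialF}, multiply by $\binom{n-1}{k}^{-3a}$, get $\binom{n-1}{k}^{-a}$, and sum over $a$ and $x$.

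The ``resolution'' you propose --- ``no further reconstruction is needed'' and in particular that you need not pay for $\B_\F$ --- is incorrect and is the genuine gap in your write-up. The event that $A_j$ has $\geq\delta k$ retained edges concentrated in a set of $\K(n,k)$-degree at most $\binom{n-k-1}{k-1}/\sqrt{k}$ is not well-defined until you specify which small set; you \emph{do} need to union-bound over $\B_\F$ (or equivalently over the sets $N(A_j)\cap\B_\F$). Fortunately this costs only $\binom{\binom{n-1}{k-1}}{a}\leq\binom{n-1}{k}^{a}$, which is absorbed as above. Two smaller quibbles: $\binom{\binom{n-1}{k-1}}{a}$ is $\exp(O(a\loga))$, not $\eoaloga$ as you assert; and the comparison ``$\exp(O(tk))=\exp(o(a\loga))$'' is not needed here (and is false when $n-2k$ is small, making $\loga=\Theta(1)$) --- the relevant comparison in this proposition is against the Chernoff tail $\binom{n-1}{k}^{-3a}$, not against $(1-p)^{e(\F)}$, since one is using Lemma~\ref{lem::minDeg} rather than Theorem~\ref{thm::eF} to rule out independence.
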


\begin{proof}
Let $H = K_p(n,k)$, and consider $\F \in \T^1_x(a) \setminus \T^4_x(a)$. For $A \in \A_\F^{<1/\sqrt{k}}$, $d_H(A,\B_\F)$ is binomially distributed with mean $p d_{K(n,k)}(A,\B_\F) \leq (1+\ep) \frac{1}{\sqrt{k}} \log\binom{n-1}{k}$. By Lemma~\ref{lem::cher}, the probability that this degree is at least $\delta k$ is at most
\[ \exp\left( - \delta k \log\frac{\delta k}{e(1+\ep)\frac{1}{\sqrt{k}} \log\binom{n-1}{k}} \right) \leq \binom{n-1}{k}^{-3\log\log k} .\]
Since the edges incident between different $A \in \A_\F$ and $\B_\F$ are distinct, the probability that every $A \in \A_\F^{<1/\sqrt{k}}$ satisfies $d_H(A,\B_\F) \geq \delta k$ is at most $\binom{n-1}{k}^{-3a}$. If for some $A \in \A_\F$, $d_H(A,\B_\F) < \delta k$, but $d_H(A,\K_x) \geq \delta k$, then $\F$ is not independent. By~\eqref{eq::trivialF}, applying the union bound over all $\F$ yields that the probability that some $\F \in \T^1_x(a) \setminus \T^4_x(a)$ is independent, and $H$ satisfies the minimum degree assumption, is at most
\[ \binom{\binom{n-1}{k}}{a} \binom{\binom{n-1}{k-1}}{a} \binom{n-1}{k}^{-3a} \leq \binom{n-1}{k}^{-a} .\]
Taking a union bound over all $a$ and $x$ and applying Lemma~\ref{lem::minDeg} finishes the proof.
\end{proof}

Using the definition of $\T^4$, we can conclude the following.

\begin{lemma}\label{lem::numA}
The number of $\A_\F$ across all $\F \in \T^4_x(a)$ is at most $\exp\left( o\left( a \log\frac{\binom{n-1}{k}}{a} \right) \right)$.
\end{lemma}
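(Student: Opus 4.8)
The plan is to split each $\A_\F$ into its high- and low-degree parts $\A_\F=\A_\F^{\geq 1/\sqrt{k}}\cup\A_\F^{<1/\sqrt{k}}$, bound the number of choices for each part across $\F\in\T^4_x(a)$ separately, and multiply. Since every $\F\in\T^4_x(a)$ is determined by the pair $(\A_\F^{\geq 1/\sqrt{k}},\A_\F^{<1/\sqrt{k}})$ together with the fixed $x$ and $a$ (recall $\B_\F$ is then recovered up to a bounded number of choices, but here we only need to count the $\A_\F$), the product bounds the number of distinct $\A_\F$.

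For the high-degree part I would apply Lemma~\ref{lem::Ahigh} with $\delta=1/\sqrt{k}$. The point is to check its hypotheses in the regime $2k+1<n<Ck$ of this section: here $n=\Theta(k)$ and $\log\binom{n-1}{k}=\Theta(k)$, so $\frac{n\log\frac{n}{n-2k}}{k\log\binom{n-1}{k}}=O\!\left(\frac{\log k}{k}\right)=o\!\left(\frac{1}{\sqrt{k}}\right)$, using $\log\frac{n}{n-2k}=O(\log k)$, and trivially $1/\sqrt{k}\leq 1/2$. Hence Lemma~\ref{lem::Ahigh} gives that the number of choices for $\A_\F^{\geq 1/\sqrt{k}}$ across all $\F\in\T^1_x(a)\supseteq\T^4_x(a)$ is at most $\eoaloga$.

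For the low-degree part, the defining condition of $\T^4$ gives $\left|\A_\F^{<1/\sqrt{k}}\right|\leq a/\log\log k$, so the number of choices is at most $\binom{\binom{n-1}{k}}{\leq a/\log\log k}$, which by the standard estimate equals $\exp\!\left(\Theta\!\left(\frac{a}{\log\log k}\log\frac{\binom{n-1}{k}\log\log k}{a}\right)\right)$. Writing $\log\frac{\binom{n-1}{k}\log\log k}{a}=\loga+\log\log\log k$, the exponent splits as $\frac{a\loga}{\log\log k}+\frac{a\log\log\log k}{\log\log k}$; the first term is $o\!\left(a\loga\right)$ because $\log\log k\to\infty$, and for the second term I would use Observation~\ref{obs::abound}, which yields $\binom{n-1}{k}/a\geq n/k\geq 2$, hence $\loga\geq\log 2$ uniformly in $a$, so $\frac{\log\log\log k}{\log\log k}=o(1)=o(\loga)$ and the second term is also $o\!\left(a\loga\right)$. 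Thus the number of choices for $\A_\F^{<1/\sqrt{k}}$ is $\eoaloga$, and multiplying the two bounds proves the lemma.

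I expect the high-degree step — verifying that $\delta=1/\sqrt{k}$ satisfies the hypothesis of Lemma~\ref{lem::Ahigh} throughout $2k+1<n<Ck$ — to be the part requiring the most care, since that is exactly where the standing restriction $n<Ck$ of this section is used; the low-degree step is a routine binomial estimate once one records that $\loga=\Omega(1)$.
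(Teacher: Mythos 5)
Your proposal is correct and follows essentially the same route as the paper: split $\A_\F$ into $\A_\F^{\geq 1/\sqrt{k}}$ and $\A_\F^{<1/\sqrt{k}}$, bound the former via Lemma~\ref{lem::Ahigh} with $\delta=1/\sqrt{k}$ (the paper states the hypothesis $\frac{1}{\sqrt{k}} = \omega\bigl( \frac{n\log\frac{n}{n-2k}}{k\log\binom{n-1}{k}} \bigr)$ without elaboration, which you verify in detail), and bound the latter by the binomial estimate $\binom{\binom{n-1}{k}}{o(a)} = \eoaloga$ using $|\A_\F^{<1/\sqrt{k}}| \leq a/\log\log k = o(a)$ from the definition of $\T^4$. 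Your low-degree calculation is just a spelled-out version of the paper's one-line estimate, and the use of $\loga = \Omega(1)$ from Observation~\ref{obs::abound} is exactly the right justification.
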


\begin{proof}
By Lemma~\ref{lem::Ahigh}, since
\[ \frac{1}{\sqrt{k}} = \omega\left( \frac{n\log\frac{n}{n-2k}}{k\log\binom{n-1}{k}} \right) ,\]
the number of $\A_\F^{\geq 1/\sqrt{k}}$ across all $\F \in \T^1_x(a)$ is at most
\[ \exp\left( o\left( a \loga \right) \right) .\]
Since $\left|\A_\F^{<1/\sqrt{k}}\right| = o(a)$ for all $\F \in \T^4_x(a)$, the number of choices for $\A_\F^{<1/\sqrt{k}}$ is at most
\[ \binom{\binom{n-1}{k}}{o(a)} = \exp\left( o\left( a\loga \right) \right) .\]
Multiplying these counts finishes the proof.
\end{proof}

\subsection{When $n-2k$ is small}\label{sec::largen:largek}

When $n-2k=o(n)$, we can essentially repeat the proof of Proposition~\ref{prop::large k small a}, although the calculations are easier this time around.

\begin{proposition}\label{prop::large k}
When $n-2k = o(n)$, no $\F \in \T^2 \cap \T^4$ with $\loga = \omega\left(\log\binom{n-k-1}{n-2k}\right)$ is independent in $K_p(n,k)$, where $p = (1+\ep)p_0$
\end{proposition}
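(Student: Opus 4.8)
The plan is to follow essentially the same strategy as in the proof of Proposition~\ref{prop::large k small a}, replacing the $n=2k+1$-specific bounds with their general-$n$ analogues and using the crucial improvement that we now count $\A_\F$ through Lemma~\ref{lem::numA} (which is available since $\F \in \T^4$) rather than through the 2-linked component count of Lemma~\ref{lem::2linked}. The degree of a vertex of $\binom{[n]\setminus\{x\}}{k}$ into $\K_x$ is $\binom{n-k-1}{k-1}$, and when $n-2k = o(n)$ this is $(1+o(1))\binom{n-1}{k-1}$, so $\B_\F$ alone cannot absorb all the edges from a vertex of $\A_\F$; quantitatively, for $A \in \A_\F$ we have $d(A, \bar\B_\F) = \binom{n-k-1}{k-1} - d(A,\B_\F) \geq \binom{n-k-1}{k-1} - a$, which is $\Omega\left(\binom{n-k-1}{k-1}\right)$ as long as $a$ is bounded away from $\binom{n-k-1}{k-1}$ — and Observation~\ref{obs::abound} together with $n-2k = o(n)$ gives $a \leq \frac{n-k}{n}\binom{n-1}{k-1} \ll \binom{n-k-1}{k-1}$. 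Hence $e(\F) \geq e(\A_\F,\bar\B_\F) \geq a\left(\binom{n-k-1}{k-1} - a\right) = \Omega\left(a\binom{n-k-1}{k-1}\right)$, a clean lower bound on the number of edges.

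First I would fix $x$ and $a$ and bound the number of $\F \in \T^2_x(a) \cap \T^4_x(a)$. By Lemma~\ref{lem::numA}, the number of choices for $\A_\F$ is $\exp\left(o\left(a\loga\right)\right)$. Given $\A_\F$, since $\F \in \T^2$ we have $\B_\F \subseteq N(\A_\F)$ and $|N(\A_\F)| \leq a\binom{n-k}{k}$, so the number of choices for $\B_\F$ is at most $\binom{a\binom{n-k}{k}}{a} \leq \exp\left(O\left(a\log\binom{n-k}{k}\right)\right) = \exp\left(O\left(a \log\binom{n-k-1}{n-2k}\right)\right)$, using $\binom{n-k}{k} = \binom{n-k}{n-2k} \leq \left(\tfrac{e(n-k)}{n-2k}\right)^{n-2k}$ and taking logs, which is $\exp\left(o\left(a\loga\right)\right)$ precisely because of the hypothesis $\loga = \omega\left(\log\binom{n-k-1}{n-2k}\right)$. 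So the total number of $\F \in \T^2_x(a) \cap \T^4_x(a)$ is $\exp\left(o\left(a\loga\right)\right)$.

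Then I would apply the union bound: the probability that some such $\F$ is independent in $K_p(n,k)$ is at most
\[ \exp\left(o\left(a\loga\right)\right) \cdot (1-p)^{e(\F)} \leq \exp\left(o\left(a\loga\right) - p \cdot \Omega\left(a\binom{n-k-1}{k-1}\right)\right). \]
Since $p = (1+\ep)p_0 \geq p_0 \geq \frac{k\log 2}{\binom{n-k-1}{k-1}}$ (from \eqref{eq::p0}), we get $p\binom{n-k-1}{k-1} = \Omega(k)$, so the exponent is at most $o\left(a\loga\right) - \Omega(ak)$. Because $\loga \leq \log\binom{n-1}{k} = O(k)$ — and in fact $\loga = \omega\left(\log\binom{n-k-1}{n-2k}\right)$ ensures $a$ is not so close to $\binom{n-1}{k}$ that $\loga$ degenerates — this is negative and bounded away from $0$ by a constant multiple of $ak$, hence $\leq \exp(-\Omega(ak))$. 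Summing over all $a \geq 2$ (the cases $a=0,1$ being stars and near-stars, already excluded, and the relevant $\F$ having $a_\F$ large by the hypothesis on $\loga$) and over all $x \in [n]$ as in Lemma~\ref{lem::bigeF} gives $o(1)$.

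The main obstacle I anticipate is making the edge lower bound and the parameter ranges interact correctly: one needs to confirm that in the regime $n-2k = o(n)$ with $\loga = \omega\left(\log\binom{n-k-1}{n-2k}\right)$, the quantity $\binom{n-k-1}{k-1} - a$ really is $\Omega\left(\binom{n-k-1}{k-1}\right)$ — this uses Observation~\ref{obs::abound} and $\binom{n-1}{k-1} = (1+o(1))\binom{n-k-1}{k-1}$ when $n-2k=o(n)$ — and that the $\B_\F$-counting term $a\log\binom{n-k-1}{n-2k}$ is genuinely swallowed by $o(a\loga)$, which is exactly what the hypothesis on $\loga$ is engineered to guarantee. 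One should also double-check the bookkeeping for the very smallest admissible $a$ (where Theorem~\ref{thm::eF} could be weaker than the trivial bound $e(\F) \geq a(\binom{n-k-1}{k-1}-a)$), but since we are using the trivial edge bound directly rather than Theorem~\ref{thm::eF}, this should pose no difficulty. Everything else is routine.
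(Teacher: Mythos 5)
Your counting of $\A_\F$ via Lemma~\ref{lem::numA} and of $\B_\F$ via the $\T^2$ constraint matches the paper's approach, but the edge lower bound you use is wrong, and the error breaks the proof. You claim that Observation~\ref{obs::abound} together with $n-2k=o(n)$ gives $a \leq \frac{n-k}{n}\binom{n-1}{k-1} \ll \binom{n-k-1}{k-1}$, justifying the trivial bound $e(\F)\geq a\left(\binom{n-k-1}{k-1}-a\right)=\Omega\left(a\binom{n-k-1}{k-1}\right)$. But the inequality between the two binomials goes the wrong way: when $n-2k=o(n)$ one has
\[ \frac{\binom{n-1}{k-1}}{\binom{n-k-1}{k-1}} = \prod_{i=1}^{k-1}\frac{n-i}{n-k-i}, \]
and the factors with $i$ near $k$ are of order $\frac{k}{n-2k}$, so this ratio is enormous (e.g.\ at $n=2k+2$ it is $\binom{2k+1}{k-1}/\binom{k+1}{2}$, exponential in $k$). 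So $\binom{n-1}{k-1}\gg\binom{n-k-1}{k-1}$, not the other way around. The hypothesis $\loga = \omega\bigl(\log\binom{n-k-1}{n-2k}\bigr)$ only forces $a \leq \binom{n-1}{k}\cdot\binom{n-k-1}{n-2k}^{-\omega(1)}$, which still permits $a$ to be exponentially larger than $\binom{n-k-1}{k-1}$, so for most of the admissible range the quantity $\binom{n-k-1}{k-1}-a$ is negative and your edge bound is vacuous. (Relatedly, $\delta k$ and $\binom{n-k-1}{k-1}$ are comparable here, which is why Lemma~\ref{lem::minDeg} by itself only kills $a_\F = O(k)$.)

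The fix is the opposite of the concern you raised at the end: you must use the nontrivial lower bound of Theorem~\ref{thm::eF}, not the trivial one. The paper applies~\eqref{eq::eF:DK}, namely $e(\F)\geq\theta\frac{1}{p_0}a_\F\loga$, which is valid across the full range of $a_\F$, and obtains $(1-p)^{e(\F)}\leq\exp\left(-(1+\ep)\theta\, a\loga\right)$. Together with your (correct) count of $\exp\left(o\left(a\loga\right)\right)$ pairs $(\A_\F,\B_\F)$, this gives failure probability at most $\exp\left(-\frac{\theta}{2}a\loga\right)$, and the union bound over $a$ and $x$ proceeds as in Lemma~\ref{lem::bigeF}. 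The rest of your argument is sound.
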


\begin{proof}
By Lemma~\ref{lem::numA}, the number of $\A_\F$ across all $\F \in \T^4_x(a)$ is at most
\[ \eoaloga .\]
Recall that for $\F \in \T^2$, $\B_\F \subseteq N(\A_\F)$. Given $\A$, the number of choices for $\B \subseteq N(\A)$ is at most
\[ \binom{a\binom{n-k-1}{k-1}}{a} = \binom{a\binom{n-k-1}{n-2k}}{a} \leq \exp\left( a \log\binom{n-k-1}{n-2k} \right) \leq \eoaloga .\]
Using~\eqref{eq::eF:DK}, the probability that some $\F \in \T^4_x(a)$ is independent is at most
\[ \exp\left( o\left( a \loga \right) - (1+\ep) \theta a \loga \right) \leq \exp\left( - \frac{\theta}{2} a \loga \right) .\]
Taking a union bound over all $a$ and $x$ finishes the proof.
\end{proof}

\subsection{Few disjoint pairs in $\A$}\label{sec::largen:eA}

In the next subsection, we need lower bounds on $e(\A,\bar\B) = e(\F) - e(\A)$. Unfortunately, lower bounds for just $e(\F)$ are given in Theorem~\ref{thm::eF}, so we supply an upper bound on $e(\A)$ here. Let
\[ \T^5 = \left\{\F \in \T^4 : e(\A_\F) \leq \frac{1}{2} e(\F)\right\} .\]
\begin{corollary}\label{cor::eA}
No $\F \in \T^4 \setminus \T^5$ is independent in $K_p(n,k)$ w.h.p., where $p=(1+\ep)p_0$.
\end{corollary}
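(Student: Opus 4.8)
The plan is to exploit the defining property of $\T^4\setminus\T^5$ directly: if $\F\in\T^4_x(a)\setminus\T^5_x(a)$ (writing $a=a_\F$), then $e(\A_\F)>\tfrac12 e(\F)$, so $\F$ carries many edges lying entirely inside $\A_\F$, and crucially these edges belong to $E(\F)$ no matter what $\B_\F$ is. First I would combine $e(\A_\F)>\tfrac12 e(\F)$ with the lower bound \eqref{eq::eF:DK} of Theorem~\ref{thm::eF} to get $e(\A_\F)>\tfrac{\theta}{2p_0}\,a\loga$ for every such $\F$.

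Next, since $\A_\F\subseteq\F$, any $\F$ that is independent in $K_p(n,k)$ has $\A_\F$ independent as well, so for a fixed $\F\in\T^4_x(a)\setminus\T^5_x(a)$,
\[ \P(\F\text{ independent})\le(1-p)^{e(\A_\F)}\le\exp\!\left(-p\cdot\tfrac{\theta}{2p_0}a\loga\right)=\exp\!\left(-\tfrac{(1+\ep)\theta}{2}\,a\loga\right). \]
The key point is that this estimate depends only on $\A_\F$, so it suffices to union bound over the distinct sets $\A_\F$ that arise; we never enumerate $\B_\F$. This matters because the trivial bound $\binom{\binom{n-1}{k-1}}{a}$ on the number of choices for $\B_\F$ has size $\exp(\Theta(a\loga))$, which the small constant $\theta$ could not absorb — avoiding this is precisely why $\T^5$ is defined through $e(\A_\F)$.

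Then I would invoke Lemma~\ref{lem::numA}: the number of distinct $\A_\F$ across all $\F\in\T^4_x(a)$ is $\eoaloga$. Hence the probability that some $\F\in\T^4_x(a)\setminus\T^5_x(a)$ is independent is at most $\exp\!\left(o(a\loga)-\tfrac{(1+\ep)\theta}{2}a\loga\right)\le\exp\!\left(-\tfrac{\theta}{2}a\loga\right)$ once $n$ is large, since $\ep$ and $\theta$ are constants, so the $o(1)$ coefficient eventually falls below $\ep\theta/2$. Finally I would sum over all $a$ and $x$, a routine tail estimate carried out exactly as in the proof of Lemma~\ref{lem::bigeF} (splitting the sum over $a$ into a small range and a large range, which is needed because $\loga$ need not grow when $n-2k$ is small), obtaining total probability $o(1)$.

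I do not expect a serious obstacle here: the corollary is essentially a one-line consequence of Lemma~\ref{lem::numA} once one observes that for $\F\notin\T^5$ the independence of $\A_\F$ alone already yields a strong enough survival bound, because $e(\A_\F)$ is then comparable to $e(\F)$, so no enumeration of $\B_\F$ is required. The only point demanding mild care is the standard two-range treatment of the final union bound over $a$.
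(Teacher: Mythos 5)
Your proof is correct and follows essentially the same route as the paper: bound the survival probability by $(1-p)^{e(\A_\F)} \leq (1-p)^{e(\F)/2}$, note this depends only on $\A_\F$, union bound over the $\eoaloga$ choices of $\A_\F$ from Lemma~\ref{lem::numA}, apply \eqref{eq::eF:DK}, and sum over $a$ and $x$. Your explicit remark about why one must avoid enumerating $\B_\F$ is a clear articulation of the motivation behind $\T^5$, but the argument itself matches the paper's.
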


\begin{proof}
By Lemma~\ref{lem::numA}, the number of $\A_\F$ across all $\F \in \T^4_x(a)$ is at most
\[ \exp\left( o\left( a \loga \right) \right) .\]
The probability that a particular $\A_\F$ is independent, where $\F \in \T^4 \setminus \T^5$, is $(1-p)^{e(\A)} \leq (1-p)^{\frac{1}{2}e(\F)}$, so the probability that some $\F \in \T^4_x(a) \setminus \T^5_x(a)$ is independent is at most, using~\eqref{eq::eF:DK},
\[ \exp\left( o\left( a \loga \right) - \frac{1}{2} \theta a \loga \right) .\]
A union bound over all choices of $a$ and $x$ finishes the proof.
\end{proof}

\subsection{When $n-2k$ is large}\label{sec::largen:B}

If the number of $\B_\F$ across all $\F \in \T^5_x(a)$ were small, then we could proceed as in Proposition~\ref{prop::large k}. Lemma~\ref{lem::Bhigh} guarantees few choices for $\B_\F^{\geq \eta}$ for reasonable $\eta$, so we could try to prove that there are few choices for $\B_\F^{<\eta}$. Unfortunately, the strategy we used for $\A_\F^{<\eta}$ will not work here, since although we can make a similar minimum degree assumption, there would be no analogue of Proposition~\ref{prop:fewlowdeg}, as $e_H(\F) = 0$ does not guarantee that $e_H(\bar\A,\B) = 0$. 

In fact, we are unable to have good control of $|\B^{<\eta}|$ for reasonable $\eta$. Instead, by taking a union bound over all $\A_\F$ and $\B_\F^{\geq\eta}$, we will show that w.h.p.\ there are many edges between $\A_\F$ and $\bar{\B_\F^{\geq \eta}}$ for every $\F$ --- too many to be absorbed by $\B_\F^{<\eta}$, so these $\F$ cannot be independent. 

\begin{prop}\label{prop::med k}
When $n-2k = \omega(\log n)$ and $k = \omega(1)$, no $\F \in \T^5$ is independent in $K_p(n,k)$ w.h.p., where $p=(1+\ep)p_0$.
\end{prop}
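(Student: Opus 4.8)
The plan is to run the union bound over certificates for $\A_\F$ and $\B_\F^{\geq\eta}$, and to show that for a typical $\F\in\T^5_x(a)$ the number of edges between $\A_\F$ and $\overline{\B_\F^{\geq\eta}}$ (i.e. edges of $K(n,k)$ from $\A_\F$ to $\K_x$ that avoid the high-degree part of $\B_\F$) is so large that $\B_\F^{<\eta}$ cannot absorb them all, hence $\F$ is not independent in $H=K_p(n,k)$. First I would pick $\eta = \eta(n,k)$ just above the threshold of Lemma~\ref{lem::Bhigh}, i.e.\ $\eta = \omega\!\left(\frac{k}{n}\cdot\frac{\log\log\binom{n-1}{k}}{\log\binom{n-1}{k}}\right)$ with $\eta\le 1/2$, but still $\eta = o(1)$, so that the number of choices for $\A_\F$ (Lemma~\ref{lem::numA}) times the number of choices for $\B_\F^{\geq\eta}$ (Lemma~\ref{lem::Bhigh}) is $\exp(o(a\loga))$. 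The key quantity to lower bound is $e(\A_\F, \overline{\B_\F^{\geq\eta}})$: by definition of $\T^5$ we have $e(\A_\F,\bar\B_\F) \ge \tfrac12 e(\F) \ge \tfrac{\theta}{2p_0} a\loga$ via~\eqref{eq::eF:DK}, and the edges from $\A_\F$ into $\B_\F^{\geq\eta}$ number at most $\eta a\binom{n-k}{k}$, which by the choice of $\eta$ and Lemma~\ref{lem::logafbound} (controlling $\loga$ from below by $\Omega(\tfrac{n-2k}{n}\log\binom{n-1}{k})$) is $o(a\loga)$ when $n-2k = \omega(\log n)$; subtracting, $e(\A_\F,\overline{\B_\F^{\geq\eta}}) \ge \tfrac{\theta}{3p_0} a\loga$, say. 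Wait — I should double-check that $e(\A_\F,\bar\B_\F)$, rather than $e(\F)$, is what appears: since $E(\F) = E(\A_\F)\sqcup E(\A_\F,\bar\B_\F)$ and $\F\in\T^5$ gives $e(\A_\F)\le\tfrac12 e(\F)$, indeed $e(\A_\F,\bar\B_\F)\ge\tfrac12 e(\F)$, so this is fine.

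Next, fix the certificate data $(\A_\F, \B_\F^{\geq\eta})$; this determines the bipartite graph between $\A_\F$ and $\overline{\B_\F^{\geq\eta}}\cap\K_x$ in $K(n,k)$, which has at least $\tfrac{\theta}{3p_0}a\loga$ edges. In $H = K_p(n,k)$ each such edge survives independently with probability $p = (1+\ep)p_0$, so the number $X$ of surviving edges is binomial with mean $\ge (1+\ep)\tfrac{\theta}{3} a\loga$. The point is that if $\F$ is independent in $H$, then every edge of $H$ from $\A_\F$ to $\K_x$ must land in $\B_\F$, and those landing in $\overline{\B_\F^{\geq\eta}}$ must land in $\B_\F^{<\eta}$; but each vertex of $\B_\F^{<\eta}$ has $K(n,k)$-degree $<\eta\binom{n-k}{k}$ into $\A_\F$, hence receives in $H$ at most (say) $2p\eta\binom{n-k}{k}$ edges w.h.p.\ by Chernoff and a union bound over the at most $a$ such vertices — this is $o(\loga)$ per vertex, totalling $o(a\loga)$. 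So independence of $\F$ forces $X = o(a\loga)$, i.e.\ $X \le \tfrac12\E X$; by the Chernoff lower tail (Lemma~\ref{lem::cher}) this has probability at most $\exp(-c\, a\loga)$ for an absolute constant $c>0$. Union bounding over the $\exp(o(a\loga))$ certificates, then over all $a$ and $x$ (the latter in the usual way), this probability is $o(1/n)$, completing the proof modulo the minimum-degree-type step controlling the $\B_\F^{<\eta}$ absorption.

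The main obstacle I expect is the absorption estimate for $\B_\F^{<\eta}$: we want to say ``w.h.p.\ no low-degree $B$ receives too many $H$-edges from $\A_\F$,'' but $\A_\F$ is not fixed in advance — there are up to $\exp(o(a\loga))$ choices. The cleanest fix is to not condition on $\A_\F$ for this part: show instead that w.h.p.\ for \emph{every} $x$, every $B\in\K_x$, and every $\A\subseteq\binom{[n]\setminus\{x\}}{k}$ with $d_{K(n,k)}(B,\A) < \eta\binom{n-k}{k}$, the degree $d_H(B,\A)$ is at most, say, $2p\eta\binom{n-k}{k} + \log\binom{n-1}{k}$; since $d_H(B,\cdot)$ is monotone in $\A$ it suffices to control $d_H(B,\binom{[n]\setminus\{x\}}{k})$ restricted to any fixed set of $\eta\binom{n-k}{k}$ neighbors, and there are only $\binom{\binom{n-k}{k}}{\eta\binom{n-k}{k}} = \exp(O(\eta\binom{n-k}{k}\log(1/\eta)))$ such sets per $B$ — small enough against $\binom{n-1}{k}^{-\omega(1)}$ from Chernoff when $\eta = o(1)$ and $n-2k = \omega(\log n)$. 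A subtler point to get right is ensuring $\eta a\binom{n-k}{k} = o(a\loga)$ simultaneously with $\eta$ large enough for Lemma~\ref{lem::Bhigh}; here the hypothesis $n-2k=\omega(\log n)$ is exactly what makes $\loga = \Omega(\tfrac{n-2k}{n}\log\binom{n-1}{k})$ dominate $\eta\binom{n-k}{k}/a \asymp \eta\cdot\tfrac{n-k}{k}\binom{n-k-1}{k-1}$, so I would carry the constants carefully through that comparison.
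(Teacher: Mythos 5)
Your overall framework matches the paper's: certify $\A_\F$ and $\B_\F^{\geq\eta}$ to get $\exp(o(a\loga))$ choices (Lemmas~\ref{lem::numA} and~\ref{lem::Bhigh}), lower-bound $e_H(\A_\F,\bar{\B_\F^{\geq\eta}})$ by Chernoff, upper-bound the ``absorption'' by $\B_\F^{<\eta}$, and subtract. However, your absorption step contains a genuine gap, and a couple of the intermediate estimates are wrong.

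First, a minor confusion: you claim ``the edges from $\A_\F$ into $\B_\F^{\geq\eta}$ number at most $\eta a\binom{n-k}{k}$.'' This bound holds for $\B_\F^{<\eta}$, not $\B_\F^{\geq\eta}$ (the high-degree vertices have \emph{more} than $\eta\binom{n-k}{k}$ neighbours in $\A_\F$, not fewer). The subtraction you set up is unnecessary anyway, since $\bar{\B^{\geq\eta}}\supseteq\bar\B$, so $e(\A,\bar{\B^{\geq\eta}})\geq e(\A,\bar\B)\geq\frac12 e(\F)$ directly from $\T^5$. Moreover, your assertion that $\eta a\binom{n-k}{k}=o(a\loga)$ is false for any $\eta$ in the range permitted by Lemma~\ref{lem::Bhigh}: for $2k+1<n<Ck$, $\binom{n-k}{k}=\exp(\Theta(k))$ while $\loga=O(k)$, so $\eta\binom{n-k}{k}$ is exponentially larger than $\loga$ unless $\eta$ is exponentially small. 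The quantity that is of order $\loga$ (for a suitable $\eta$) is $p\eta\binom{n-k}{k}$, not $\eta\binom{n-k}{k}$; the factor of $p=\Theta(p_0)=\Theta(k/\binom{n-k-1}{k-1})$ is what tames the binomial coefficient.

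The serious gap is in the absorption: you propose, for each $B\in\K_x$, to union-bound $d_H(B,\A)$ over all $\eta\binom{n-k}{k}$-subsets $\A'$ of $N(B)$. There are $\binom{\binom{n-k}{k}}{\eta\binom{n-k}{k}}=\exp\bigl(\Theta(\eta\binom{n-k}{k}\log(1/\eta))\bigr)$ such subsets, which for $n<Ck$ is $\exp\bigl(\exp(\Theta(k))\bigr)$ --- doubly exponential in $k$. No Chernoff bound for $d_H(B,\A')$, a binomial with mean $p\eta\binom{n-k}{k}$, can beat this: to push the failure probability below $\exp(-\eta\binom{n-k}{k})$ you would have to take the threshold $t$ to be of order $\eta\binom{n-k}{k}$, which is far larger than $\loga$, destroying the bound $a\cdot t=o(a\loga)$ you need at the end. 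The paper sidesteps this by not conditioning on $\A_\F$ or $N(B)$ at all. Instead it (i) takes $\eta$ as \emph{large} as Lemma~\ref{lem::logafbound} allows rather than just above the threshold of Lemma~\ref{lem::Bhigh}, with a small tuned constant ($\frac{\theta e^{-81/\theta}}{16}$) so that $p\eta\binom{n-k}{k}$ equals a tiny constant times $\loga$; (ii) treats $e_H(\A,\B^{<\eta})$ as a single binomial with mean $\leq(1+\ep)\frac{\theta}{16}e^{-81/\theta}a\loga$; and (iii) Chernoff-bounds the event $e_H(\A,\B^{<\eta})\geq\frac{\theta}{16}a\loga$, getting failure probability $\exp(-5a\loga)$, which suffices for a union bound over the \emph{trivial} $\exp(O(a\loga))$ choices of $(\A,\B)$ from~\eqref{eq::trivialF}. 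You should replace your pointwise ``$d_H(B,\cdot)$ over all subsets'' argument with this aggregate bound on $e_H(\A,\B^{<\eta})$.
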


\begin{proof}
Let $H = K_p(n,k)$. Consider $\F \in \T_x^5(a)$. Define
\[ \eta = \frac{\theta e^{-81/\theta}}{16} \cdot \frac{k}{n-k} \cdot \frac{\log\frac{\binom{n-1}{k}}{a}}{\log\left(n\binom{n-1}{k}\right)} \geq \Omega\left( \frac{k}{n-k} \cdot \frac{n-2k}{n} \right) ,\]
which follows from Lemma~\ref{lem::logafbound}. By Lemma~\ref{lem::numA}, the number of $\A_\F$ across all $\F \in \T^5_x(a)$ is at most
\[ \exp\left( o\left( a \loga \right) \right) .\]
By Lemma~\ref{lem::Bhigh}, the number of distinct $\B_\F^{\geq \eta}$ across all $\F \in \T^3$ is at most
\[ \exp\left( o\left( a \loga \right) \right) ,\]
since
\[ \eta = \Omega\left( \frac{k}{n}\cdot \frac{n-2k}{n} \right) = \omega\left( \frac{k}{n} \frac{\log\log\binom{n-1}{k}}{\log\binom{n-1}{k}} \right) .\]
Thus the number of $(\A,\B^{\geq\eta})$ pairs is at most
\[ \exp\left( o\left(a\loga\right) \right) .\]
Observe that $e_H(\A,\bar{\B^{\geq\eta}})$ is binomially distributed with mean
\[ p e_{K(n,k)}(\A,\bar{\B^{\geq\eta}}) \geq \frac{1}{2} (1+\ep) p_0 e(\F) \geq \frac{\theta}{2} a \loga .\]
By Lemma~\ref{lem::cher}, the probability that $e_H(\A,\bar{\B^{\geq\eta}}) \leq \frac{\theta}{16} a \log\frac{\binom{n-1}{k}}{a}$ is at most
\[ \exp\left( - \Omega\left( a \log\frac{\binom{n-1}{k}}{a} \right) \right) .\]

We may take the union bound over all such $\F$, $a$, and $x$ to get that, w.h.p., $e_H(\A_\F,\bar{\B_\F^{\geq\eta}}) \geq \frac{\theta}{16} a \log\frac{\binom{n-1}{k}}{a}$ for all $\F \in \T^5$.

Similarly, observe that $e_H(\A,\B^{<\eta})$ is binomially distributed with mean
\[ p e_{K(n,k)}(\A,\B^{<\eta}) \leq (1+\ep)p_0 |\B^{<\eta}| \eta \binom{n-k}{k} \leq (1+\ep) \frac{\theta}{16} e^{-81/\theta} a \loga ,\]
where in the second inequality we simply used $|\B^{<\eta}| \leq a$. By Lemma~\ref{lem::cher}, the probability that $e_H(\A,\B^{<\eta}) \geq \frac{\theta}{16} a \log\frac{\binom{n-1}{k}}{a}$ is at most
\[ \exp\left( - 5 a \log\frac{\binom{n-1}{k}}{a} \right) .\]
Using \eqref{eq::trivialF}, we can take a union bound over all $\F \in \T^3_x(a)$, and then union bound over all $a$ and $x$. Thus w.h.p., $e_H(\A,\B^{<\eta}) < \frac{\theta}{16} a \log\frac{\binom{n-1}{k}}{a}$ for all $\F \in \T^5$. 

Thus w.h.p., for all $\F \in \T^5$,
\[ e_H(\A_\F,\bar\B_\F) \geq e_H(\A_\F,\bar{\B_\F^{\geq\eta}}) - e_H(\A_\F,\B_\F^{<\eta}) > 0 ,\]
so no such $\F$ is independent in $H$.
\end{proof}

\subsection{Proof of Theorem~\ref{thm::main}}\label{sec::largen:proof}

\begin{proof}[Proof of Theorem~\ref{thm::main}]
By Lemma~\ref{lem::bigeF}, the only $\F \in \T$ which are independent in $K_p(n,k)$ are in $\T^1$ w.h.p.

For $n-2k \leq \sqrt{n}$, say, Proposition~\ref{prop::large k large a} gives that the only $\F \in \T^1$ which are independent in $K_p(n,k)$ satisfy $\loga = o(n)$. For each such $\F$, there exists $\F' \in \T^2$ with $E(\F'') \subseteq E(\F)$ by Proposition~\ref{prop::T2reduc}. Since
\[ \log\binom{n-k-1}{n-2k} = O\left((n-2k)\log\frac{n}{n-2k}\right) = o(n) ,\]
Propositions~\ref{prop:fewlowdeg} and~\ref{prop::large k} show that $\F'$ is not independent w.h.p., completing the proof for $n-2k \leq \sqrt{n}$.

For $n-2k \geq \sqrt{n}$, Proposition~\ref{prop:fewlowdeg} gives that the only $\F \in \T^1$ which are independent in $K_p(n,k)$ are in $\T^4$. The theorem then follows from Corollary~\ref{cor::eA} and Proposition~\ref{prop::med k}.
\end{proof}

% \bibliographystyle{amsplain}
% \bibliography{reference}

\begin{thebibliography}{10}

\bibitem{BBM}
J.~Balogh, T.~Bohman, and D.~Mubayi, \emph{{E}rd{\H{o}}s-{K}o-{R}ado in random
  hypergraphs}, Combinatorics, Probability and Computing \textbf{18} (2009),
  no.~5, 629--646.

\bibitem{BBN}
J.~Balogh, B.~Bollob{\'a}s, and B.~P. Narayanan, \emph{Transference for the
  {E}rd{\H{o}}s-{K}o-{R}ado theorem}, Forum of Mathematics, Sigma \textbf{3}
  (2015), e23.

\bibitem{BDDLS}
J.~Balogh, S.~Das, M.~Delcourt, H.~Liu, and M.~Sharifzadeh, \emph{Intersecting
  families of discrete structures are typically trivial}, Journal of
  Combinatorial Theory, Series A \textbf{132} (2015), 224--245.

\bibitem{BDLST}
J.~Balogh, S.~Das, H.~Liu, M.~Sharifzadeh, and T.~Tran, \emph{Structure and
  supersaturation for intersecting families}, The Electronic Journal of
  Combinatorics \textbf{26} (2019), P2.34.

\bibitem{BGLW}
J.~Balogh, R.~I. Garcia, L.~Li, and A.~Zs. Wagner, \emph{Intersecting families
  of sets are typically trivial}, arXiv preprint arXiv:2104.03260 (2021).

\bibitem{BMS}
J.~Balogh, R.~Morris, and W.~Samotij, \emph{Independent sets in hypergraphs},
  Journal of the American Mathematical Society \textbf{28} (2015), no.~3,
  669--709.

\bibitem{Dist}
L.~I.~Bogolubsky, A.~S.~Gusev, M.~M.~Pyaderkin, and A.~M.~Raigorodskii, \emph{Independence numbers and chromatic numbers of the random subgraphs of some distance graphs}, Sbornik: Mathematics \textbf{206} (2015), no.~10, 1340--1374.

\bibitem{BNR}
B.~Bollob{\'a}s, B.~P. Narayanan, and A.~M. Raigorodskii, \emph{On the
  stability of the {E}rd{\H{o}}s-{K}o-{R}ado theorem}, Journal of Combinatorial
  Theory, Series A \textbf{137} (2016), 64--78.

\bibitem{DT}
S.~Das and T.~Tran, \emph{Removal and stability for {E}rd{\H o}s-{K}o-{R}ado},
  SIAM Journal on Discrete Mathematics \textbf{30} (2016), no.~2, 1102--1114.

\bibitem{DK}
P.~Devlin and J.~Kahn, \emph{On ``stability'' in the {E}rd{\H o}s-{K}o-{R}ado
  theorem}, SIAM Journal on Discrete Mathematics \textbf{30} (2016), no.~2,
  1283--1289.

\bibitem{EKR}
P.~Erd{\H o}s, C.~Ko, and R.~Rado, \emph{Intersection theorems for systems of
  finite sets}, The Quarterly Journal of Mathematics \textbf{12} (1961), no.~1,
  313--320.

\bibitem{FK}
P.~Frankl and A.~Kupavskii, \emph{{E}rd{\H{o}}s-{K}o-{R}ado theorem for
  $\{0,\pm 1\}$-vectors}, Journal of Combinatorial Theory, Series A
  \textbf{155} (2018), 157--179.

\bibitem{GK}
D.~Galvin and J.~Kahn, \emph{On phase transition in the hard-core model on
  $\mathbb{Z}^d$}, Combinatorics, Probability and Computing \textbf{13} (2004),
  no.~2, 137--164.

\bibitem{GHO}
M.~M. Gauy, H.~H\`an, and I.~C. Oliveira, \emph{{E}rd{\H{o}}s-{K}o-{R}ado for
  random hypergraphs: asymptotics and stability}, Combinatorics, Probability
  and Computing \textbf{26} (2017), no.~3, 406--422.

\bibitem{HKI}
A.~Hamm and J.~Kahn, \emph{On {E}rd{\H{o}}s-{K}o-{R}ado for random hypergraphs
  {I}}, Combinatorics, Probability and Computing \textbf{28} (2019), no.~6,
  881--916.

\bibitem{HKII}
A.~Hamm and J.~Kahn, \emph{On {E}rd{\H{o}}s-{K}o-{R}ado for random hypergraphs {II}},
  Combinatorics, Probability and Computing \textbf{28} (2019), no.~1, 61--80.

\bibitem{HM}
A.~J.~W. Hilton and E.~C. Milner, \emph{Some intersection theorems for systems
  of finite sets}, The Quarterly Journal of Mathematics \textbf{18} (1967),
  no.~1, 369--384.

\bibitem{JLR}
S.~Janson, T.~{\L}uczak, and A.~Ruci{\'n}ski, \emph{Random graphs}, John Wiley
  \& Sons, 2000.

\bibitem{Katona}
Gy. O.~H. Katona, \emph{A theorem of finite sets}, Theory of Graphs,
  Akad{\'e}mia Kiad{\'o}, Budapest (1968), 187--207.

\bibitem{Kos}
A.~V.~Kostochka, \emph{On the number of connected subgraphs with small edge-boundary in regular graphs}, Random Structures \& Algorithms \textbf{5} (1994), no.~1, 147--154.

\bibitem{CubeSurv}
A.~V.~Kostochka, A.~A.~Sapozhenko, and K.~Weber, \emph{On Random Cubical Graphs}, Annals of Discrete Mathematics \textbf{51} (1992), 155--160.

\bibitem{Kruskal}
J.~B. Kruskal, \emph{The number of simplices in a complex}, Mathematical
  optimization techniques \textbf{10} (1963), 251--278.

\bibitem{KupKneser}
A.~Kupavskii, \emph{On random subgraphs of Kneser and Schrijver graphs}, Journal of Combinatorial Theory, Series A \textbf{141} (2016), 8--15.

\bibitem{Kupavskii}
A.~Kupavskii, \emph{Diversity of uniform intersecting families}, European
  Journal of Combinatorics \textbf{74} (2018), 39--47.

\bibitem{LovaszKneser}
L.~Lov\'asz, \emph{Kneser's conjecture, chromatic number, and homotopy}, Journal of Combinatorial Theory, Series A \textbf{25} (1978), no.~3, 319--324.


\bibitem{Lovasz}
L.~Lov{\'a}sz, \emph{Combinatorial problems and exercises}, North-Holland,
  Amsterdam, 1993.

\bibitem{Johnson}
F.~J. MacWilliams and N.~J.~A. Sloane, \emph{The theory of error-correcting
  codes}, North-Holland, Amsterdam, 1977.

\bibitem{ST}
D.~Saxton and A.~Thomason, \emph{Hypergraph containers}, Inventiones
  mathematicae \textbf{201} (2015), no.~3, 925--992.

\end{thebibliography}

\end{document}